\DeclarePairedDelimiter{\ceil}{\lceil}{\rceil}
\DeclarePairedDelimiter{\floor}{\lfloor}{\rfloor}
\newtheorem{example}{Example}
\newtheorem*{acknowledgements}{Acknowledgements}
\newtheorem{theorem}{Theorem}
\newtheorem{prop}{Proposition}
\newtheorem{lemma}{Lemma}
\newtheorem{cor}{Corollary}
\newtheorem{claim}{Claim}
\newtheorem{conj}{Conjecture}
\newtheorem{remark}{Remark}
\newtheorem{defn}{Definition}
\numberwithin{equation}{section}
        \definecolor{pink}{rgb}{1,0,1}
        \definecolor{purple}{rgb}{0.4,0.2,1}
\newcommand{\supp}{\operatorname{Supp}}
\newcommand{\re}{\operatorname{Re}}
\newcommand{\im}{\operatorname{Im}}
\newcommand{\Ker}{\operatorname{Ker}}
\newcommand{\pa}{\partial}
\newcommand{\eps}{\varepsilon}
\newcommand{\calV}{\mathcal{V}}
\newcommand{\del}{\partial}
\newcommand{\tr}{{\rm Tr}}
\newcommand{\Tr}{{\rm Tr}}
\newcommand{\cD}{{\mathcal{D}}}
\newcommand{\cN}{{\mathcal{N}}}
\newcommand{\cM}{{\mathcal{M}}}
\newcommand{\cT}{{\mathcal{T}}}
\newcommand{\cR}{{\mathcal{R}}}
\newcommand{\cA}{{\mathcal{A}}}
\newcommand{\cB}{{\mathcal{B}}}
\newcommand{\N}{\mathbb{N}}
\newcommand{\R}{\mathbb{R}}
\newcommand{\D}{\mathbb{D}}
\newcommand{\cV}{\mathcal{V}}
\newcommand{\C}{\mathbb{C}}
\newcommand{\Z}{\mathbb{Z}}
\newcommand{\cC}{\mathcal{C}}
\newcommand{\cL}{\mathcal{L}}
\DeclareMathOperator{\Dom}{Dom}
\newcommand{\wt}[1]{\widetilde{#1}}
\title{A Polyakov formula for sectors}  
\author{Clara L. Aldana} 
\author{Julie Rowlett}
\address{University of Luxembourg\\ 6, avenue de la Fonte\\ L-4364 Esch-sur-Alzette, Luxembourg}
\email{clara.aldana@uni.lu}
\address{Department of Mathematics \\ Chalmers University of Technology and the University of Gothenburg \\ 41296 Gothenburg, Sweden}
\email{julie.rowlett@chalmers.se}
\thanks{C.L. Aldana was supported by ANR grant ACG: ANR-10-BLAN 0105 and by the Fonds National de la Recherche, Luxembourg 7926179}
\begin{document}

\begin{abstract}
We consider finite area convex Euclidean circular sectors.  We prove a variational Polyakov formula which shows how the zeta-regularized determinant of the Laplacian varies with respect to the opening angle. Varying the angle corresponds to a conformal deformation in the direction of a conformal factor with a logarithmic singularity at the origin.  We compute explicitly all the contributions to this formula coming from the different parts of the sector. In the process, we obtain an explicit expression for the heat kernel on an infinite area sector using Carslaw-Sommerfeld's heat kernel. We also compute the zeta-regularized determinant of rectangular domains of unit area and prove that it is uniquely maximized by the square. 
\end{abstract}

\maketitle

\section{Introduction}
Polyakov's formula expresses a difference of zeta-regularized determinants of
Laplace operators, an anomaly of global quantities, in terms of simple local quantities. 
The main applications of Polyakov's formula are in differential geometry and mathematical physics. In mathematical physics, this formula arose in the study of the quantum theory of strings \cite{polyakov} and has been used in connection to conformal quantum field theory \cite{bpz} and Feynmann path integrals \cite{hawk}.  

In differential geometry, Polyakov's formula was used in the work of Osgood, Phillips and Sarnak \cite{OPS} to prove that under certain restrictions on the Riemannian metric, the determinant is maximized at the uniform metric inside a conformal class. Their result holds for smooth closed surfaces and for surfaces with smooth boundary. This result was generalized to surfaces with cusps and funnel ends in \cite{AAR}. The techniques used in this article are similar to the ones used by the first author in \cite{Aldana-ae} to prove a Polyakov formula for the relative determinant for surfaces with cusps.

We expect that the formula of Polyakov we shall demonstrate here will have applications to differential geometry in the spirit of \cite{OPS}. Our formula is a step towards answering some of the many open questions for domains with corners such as polygonal domains and surfaces with conical singularities:  what are suitable restrictions to have an extremal of the determinant in a conformal class as in \cite{OPS}? Will it be unique? Does the regular $n$-gon maximize the determinant on all $n$-gons of fixed area?  What happens to the determinant on a family of $n$-gons which collapses to a segment?  

\subsection{The zeta regularized determinant of the Laplacian} 
Consider a smooth $n$-dimensional manifold $M$ with Riemannian metric $g$. 
We denote by $\Delta_g$ the Laplace operator associated to the metric $g$. We consider the positive Laplacian $\Delta_g\geq 0$.
If $M$ is compact and without boundary, or if $M$ has non-empty boundary and suitable boundary conditions are imposed, then the eigenvalues of the Laplace operator form an increasing,
discrete subset of $\R^+$,
$$0 \leq \lambda_1 \leq \lambda_2 \leq \lambda_3 \leq \ldots.$$
These eigenvalues tend toward infinity according to Weyl's law \cite{weyl},
$$\lambda_k^{\frac n 2} \sim \frac{(2 \pi)^n k}{\omega_n \textrm{Vol}(M)}, \quad \textrm{ as } k \to \infty,$$
where $\omega_n$ is the volume of the unit ball in $\R^n$.

Ray and Singer generalized the notion of determinant of matrices to the Laplace-de Rham operator on forms using an associated zeta function
\cite{rays}. The spectral zeta function associated to the Laplace operator is defined for $s\in \C$ with $\re(s)>\frac{n}{2}$ by 
$$\zeta(s) := \sum_{\lambda_k >0} \lambda_k ^{-s}.$$
By Weyl's law, the zeta function is holomorphic on the half-plane $\{\re(s) > n/2\}$, and it is well known that the heat equation can be used to prove that the zeta function admits a meromorphic extension to $\C$ which is holomorphic at $s=0$ \cite{rays}.
Consequently, the zeta-regularized determinant of the Laplace operator may be defined as 
\begin{equation} \label{zdet} \det(\Delta) := e^{-\zeta'(0)}.\end{equation}
In this way, the determinant of the Laplacian is a number that depends only on the spectrum; it is a spectral invariant.  Furthermore, it is also a global invariant, meaning that in general it can not be expressed as an integral over the manifold of local quantities.

\subsection{Polyakov's formula for smooth surfaces} 
Let $(M,g)$ be a smooth Riemannian surface. Let $g_t=e^{2\sigma(t)}g$ be a one-parameter family of metrics in the conformal class of $g$ depending smoothly on $t \in (-\epsilon, \epsilon)$ for some $\epsilon > 0$. Assume that each conformal factor $\sigma(t)$ is a smooth function on $M$. The Laplacian for the metric $g_t$ relates to the Laplacian of the metric $g$ via 
$$\Delta_{g_t} = e^{-2\sigma(t)} \Delta_g.$$
The variation of the Laplacian for the metric $g_t$ with respect to the parameter $t$ is 
\begin{equation} \left . \partial_t \Delta_{g_t}\right|_{t=0} = -2 \sigma'(0) \Delta_{g_0}, \quad g_0 = e^{2\sigma(0)} g.\label{eq:varXidir}\end{equation}

In this setting, Polyakov's formula gives the variation of the determinant of the family of conformal Laplacians $\Delta_{g_t}$ with respect to the parameter $t$ of the \em conformal factor \em  $\sigma(t)$, \cite{KoKo} and \cite{AAR}, 
\begin{equation}
\pa_t \log \det(\Delta_{g_t}) = {-\frac{1}{24\pi}} \int_{M} \sigma'(t) \ \text{Scal}_t \ dA_{g_t}
+ \pa_t \log \text{Area}(M,g_t),
\label{polclosed-v1}\end{equation}
where $\text{Scal}_t $ denotes the scalar curvature of the metric $g_t$. This is the type of formula that we demonstrate here and may refer to it as either the differentiated or variational Polyakov formula or simply Polyakov's formula. The classical form of Polyakov's formula is the \lq\lq integrated form\rq\rq which expresses the determinant as an anomaly; for a surface 
$M$ with smooth boundary it was first proven by Alvarez \cite{alv}; see also \cite{OPS}.
There are two main difficulties which distinguish our work from the case of closed surfaces:  (1) the presence of a geometric singularity in the domain or surface and (2) the presence of an analytic singularity in the conformal factor.  
\subsection{Conical singularities}
Analytically and geometrically, the presence of even the simplest conical singularity, a corner in a Euclidean domain, has a profound impact on the Laplace operator. As in the case of a manifold with boundary, the Laplace operator is not essentially self-adjoint.  It has many self adjoint extensions, and the spectrum depends on the choice of self-adjoint extension. Thus, the zeta-regularized determinant of the Laplacian also depends upon this choice \cite{gm-zeta}. In addition, conical singularities add regularity problems that do not appear when the boundary of the domain or manifold is smooth. 

In recent years there has been progress towards understanding the behavior of the determinant of certain self-adjoint extensions of the Laplace operator, most notably the Friedrichs extension, on  surfaces with conical singularities. This progress represents different aspects that have been studied by Kokotov \cite{k}, Hillairet and Kokotov \cite{hk}, Loya et al \cite{LoMcDPa}, Spreafico \cite{Spre}, and Sher \cite{Sher}. In particular, the results by Aurell and Salomonson in \cite{AuSa} inspired our present work.  Using heuristic arguments they computed a formula for the contribution of the corners to the variation of the determinant on a polygon \cite[eqn (51)]{AuSa}. Here we use different techniques to rigorously prove the differentiated Polyakov formula for an angular sector.  Our work is complementary to those mentioned above since the dependence of the determinant of the Friedrichs extension of the Laplacian with respect to changes of the cone angle has not been addressed previously.  In addition, our formula can be related to a variational principle.  

\subsection{Organization and main results}
In \S \ref{s:prelim}, we present the framework of this article and develop the requisite geometric and analytic tools needed to prove our first main result, Theorem \ref{var-det} below. In \S \ref{ss:stae} and \S \ref{Scarslaw} we prove the following theorem which is a key ingredient in the proof of Theorem \ref{var-det}.  

\begin{theorem}
Let $\mathcal M_f$ denote the multiplication operator by the function $f$, so that for a function $\phi$, 
$$\mathcal M_f : \phi \mapsto f \phi.$$
Let $S_\alpha$ denote a finite circular sector of opening angle $\alpha \in (0,\pi)$, and let $e^{-t \Delta_\alpha}$ denote the heat operator associated to the Dirichlet extension of the Laplacian. Then, the operator ${\mathcal M}_{\left( 1 + \log(r) \right)}e^{-t\Delta_{\alpha}}$ on $S_{\alpha}$ is trace class and its trace admits an  
asymptotic expansion at $t\to 0$ of the form 
\begin{equation} \label{exp-exists} \tr_{S_\alpha}  \left( {\mathcal M}_{\left( 1 + \log(r) \right)}e^{-t\Delta_{\alpha}} \right) \sim  a_0 t^{-1} + a_{1}t^{-\frac12} + a_{2, 0} \log(t) + a_{2, 1} + O(t^{1/2}). \end{equation} 
\label{th-exp-exists}
\end{theorem}

The trace in Theorem \ref{th-exp-exists} can be rewritten as the following integral:
$$\tr_{S_\alpha}  \left( {\mathcal M}_{\left( 1 + \log(r) \right)}e^{-t\Delta_{\alpha}} \right) = \int_{S_\alpha} (1+\log(r)) H_{S_{\alpha}}(t,r,\phi,r,\phi) r dr d\phi,$$
where $H_{S_\alpha}$ denotes the Schwartz kernel of $e^{-t \Delta_\alpha}$, also called the heat kernel.
Our next theorem is a preliminary variational Polyakov formula.

\begin{theorem}\label{var-det} Let $\{S_{\gamma}\}_{\gamma \in (0, \pi)}$ be a family of finite circular 
sectors in $\R^2$, where $S_\gamma$ has opening angle $\gamma$ and unit radius. Let $\Delta_{\gamma}$ be the Euclidean Dirichlet Laplacian on $S_\gamma$. Then for any $\alpha \in (0, \pi)$ 
\begin{equation} \label{variation}
\left.\frac{\pa}{\pa \gamma} \big(-\log(\det(\Delta_{\gamma}))\big)\right|_{\gamma=\alpha} = 
\frac{2}{\alpha} \left( -\gamma_{e} a_{2,0} + a_{2,1} \right).  
 \end{equation} 
Above, $\gamma_{e}$ is the Euler constant, and $a_{2,0}$ is the coefficient of $\log(t)$ and $a_{2,1}$ is the constant coefficient in the asymptotic expansion as $t \to 0$ given in equation (\ref{exp-exists}).  
 
If the radial direction is multiplied by a factor of $R$, which is equivalent to scaling the metrics by $R^2$, the determinant of the
Laplacian transforms as
$$\det(\Delta_{\alpha}) \mapsto R^{-2 \zeta_{\Delta_{\alpha}}(0)} \det(\Delta_{\alpha}).$$
\end{theorem}

The proof of the preceding results comprises \S \ref{s:prelim} and \S \ref{s:formula}.   In \S \ref{s:qcrd} we prove the following theorem.  Its proof not only illustrates the method we shall use to compute the general case of a sector of opening angle $\alpha \in (0, \pi)$ but also shall be used in the proof of the general case.

\begin{theorem} \label{ctpio2}  Let $S_{\pi/2}\subset \R^2$ be a circular sector of opening angle $\pi/2$ and radius one. Then the variational Polyakov formula is 
$$
\left.\frac{\pa}{\pa \gamma} \big(-\log(\det(\Delta_{S_\gamma}))\big)\right|_{\gamma=\pi/2} = 
\frac{-\gamma_e}{4\pi} + \frac{5}{12 \pi}, $$ 
where $\gamma_e$ is the Euler-Mascheroni constant.
\end{theorem}

In \S \ref{Scarslaw} we determine an explicit formula for Sommerfeld-Carslaw's heat kernel for an infinite sector with opening angle $\alpha$.  This allows us to compute the contribution of the corner at the origin to the variational Polyakov formula, completing the proof of Theorem \ref{th-exp-exists}.  Moreover, these calculations allow us to refine the preliminary variational Polyakov formula by determining an explicit formula. 

\begin{theorem} \label{allsectors} Assume the same hypotheses as in Theorem \ref{var-det}.  Let 
$$ k_{min} = \ceil*{ \frac{-\pi}{2\alpha} }, \textrm{ and } k_{max} = \floor*{\frac{\pi}{2\alpha}} \textrm{ if } \frac{\pi}{2\alpha} \not\in \Z, \textrm{ otherwise } k_{max} = \frac{\pi}{2\alpha} - 1, $$ 
and 
$$W_{\alpha} =\left\{  k \in \left( \Z \bigcap \left[k_{min},  k_{max}\right]\right) \setminus \left\{ \frac{\ell\pi}{\alpha} \right\}_{\ell \in \Z} \right\} .$$
Then 
\begin{multline*}
\begin{split}
\frac{\pa}{\pa \gamma} &\left. \big(-\log(\det(\Delta_{\gamma}))\big)\right|_{\gamma=\alpha} = \frac{\pi}{12\alpha^2} + \frac{1}{12\pi}\\
& +  \sum_{k\in W_{\alpha}} \frac{-2\gamma_e + \log(2) - \log\left({1-\cos(2k\alpha)}\right) }{4 \pi (1-\cos(2k\alpha))} \\
& - (1-\delta_{\alpha, \frac{\pi}{n}})\ \frac{2}{\alpha} \sin(\pi^2/\alpha) \int_{-\infty} ^\infty 
\frac{\gamma_e + \log(2) - \log(1+\cosh(s))}{16\pi(1+\cosh(s))(\cosh(\pi s /\alpha) - \cos(\pi^2/\alpha))} ds, 
\end{split}
\end{multline*}
where $n\in \N$ is arbitrary and $\delta_{\alpha, \frac{\pi}{n}}$ denotes the Kronecker delta.
\end{theorem}

Here is a short list of examples. Let us denote 
$${\mathcal{S}}(\alpha):=\left.\frac{\pa}{\pa \gamma} \big(-\log(\det(\Delta_{\gamma}))\big)\right|_{\gamma=\alpha}.$$ 
Then ${\mathcal{S}}(\alpha)$ and the set $W_{\alpha}$ have the following values:  
\begin{enumerate}
\item $\alpha=\frac{\pi}{4}$, $W_{\frac{\pi}{4}}= \{-2,\pm1,\}$, ${\mathcal{S}}(\frac{\pi}{4})= \frac{-5\gamma_e}{4\pi} + \frac{\log(2)}{4\pi}+\frac{17}{12 \pi}\sim 0.2764$
\item $\alpha=\frac{\pi}{3}$, $W_{\frac{\pi}{3}}= \{-1,1\}$, 
${\mathcal{S}}(\frac{\pi}{3})= \frac{-\gamma_e}{2\pi} + \frac{\log(2)}{2\pi}+\frac{5}{6 \pi}\sim 0.2837$
\item $\alpha=\frac{\pi}{2}$, $W_{\frac{\pi}{2}}=\{-1\}$, ${\mathcal{S}}(\frac{\pi}{2})= \frac{-\gamma_e}{4\pi} + \frac{5}{12 \pi}\sim 0.0867$
\item For $\alpha \in ]\frac{\pi}{2},\pi[$, $W_{\alpha}=\emptyset$, but $\sin(\pi^2/\alpha)\neq 0$. Thus, the integral in Theorem \ref{allsectors} determines ${\mathcal{S}}(\alpha)$.  For example, with $\alpha=\frac{2\pi}{3}$, the integral converges rapidly, and a numerical computation gives an approximate  value of $0.0075015$. Hence  ${\mathcal{S}}(\frac{2\pi}{3}) \sim 0.0933723$.
\end{enumerate}

Generalizing our Polyakov formula to Euclidean polygons shall require additional considerations because one cannot change the angles independently.  We expect that the results obtained here will help us to achieve these generalizations with the eventual goal of computing closed formulas for the determinant on planar sectors and Euclidean polygons.  In the latter setting one naturally expects the following:

\begin{conj} 
  Amongst all convex $n$-gons of fixed area, the regular one maximizes the determinant.  
  \end{conj}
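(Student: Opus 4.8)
The plan is to promote the variational Polyakov formula of Theorem~\ref{var-det} from a single circular sector to an arbitrary convex Euclidean $n$-gon, and then to run a constrained variational argument in the spirit of Osgood--Phillips--Sarnak \cite{OPS}. The crucial first ingredient is a \emph{locality} statement, modelled on the trace formula of Proposition~\ref{Pfswacs}: if $\{P_t\}$ is a smooth one-parameter family of convex $n$-gons of fixed area, realized as conformal deformations of a fixed reference polygon, then one expects
\begin{equation*}
\pa_t \log \det(\Delta_{P_t}) = \sum_{j=1}^n F(\gamma_j)\,\pa_t \gamma_j + (\text{a term varying smoothly with } t),
\end{equation*}
where $\gamma_1,\dots,\gamma_n$ are the interior angles and $F$ is a \emph{universal} function of one angle, extracted from the sector computation. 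To establish this one needs a parametrix/gluing argument: near each vertex the heat kernel of $P_t$ agrees, up to exponentially small and locally integrable errors, with that of the model infinite sector of the same angle, so the finite-part integral in~\eqref{variation} splits as a sum of corner contributions depending only on the local angle, plus a remainder; the $\zeta_{\Delta}(0)$-scaling term and the area normalization are tracked using the second assertion of Theorem~\ref{var-det}. The value $F(\pi/2)$ is exactly what Theorem~\ref{ctpio2} computes.

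Next I would set up the optimization. Parametrize the moduli space $\mathcal{M}_n$ of convex $n$-gons of a fixed area by the vertices modulo rigid motions; the angles obey $\sum_j(\pi-\gamma_j)=2\pi$, and the side lengths satisfy the closing-up relation together with the area constraint. One must then prove a compactness statement: $\det(\Delta_{\cdot})$ extends continuously to $\mathcal{M}_n$ and tends to $0$ along every sequence approaching the boundary --- that is, when the polygon collapses onto a segment or when a vertex angle tends to $0$ or to $\pi$. This is precisely the \lq\lq collapsing $n$-gon\rq\rq\ question raised in the introduction; it should follow from Weyl-type lower bounds on the eigenvalues combined with the behaviour of the heat trace under these degenerations. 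Granting it, the supremum of the determinant over $\mathcal{M}_n$ is attained at an interior critical point.

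At such a maximizer, Lagrange multipliers applied to the locality formula give Euler--Lagrange equations relating the corner functions $F(\gamma_j)$, the multipliers enforcing the closing-up constraints, and the area multiplier. Because the regular $n$-gon is a fixed point of the dihedral group action on $\mathcal{M}_n$ under which $\det$ is invariant, it is automatically a critical point. To upgrade criticality to \emph{unique global} maximality I would aim to prove a strict concavity statement for $\log\det$ on a finite-dimensional slice transverse to the rigid-motion and scaling orbits, using the affine structure on angle-space coming from the exterior-angle relation; one then argues, by a symmetrization or steepest-ascent argument, that any non-regular convex polygon admits an area-preserving deformation strictly increasing $\det$ and converging to the regular configuration. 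For $n=3$ the slice is low-dimensional enough that this concavity might be checked directly from an explicit $F$.

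The main obstacle is twofold. First, the closed form of the universal corner function $F(\gamma)$ for general $\gamma\in(0,\pi)$ is needed to make the Euler--Lagrange equations and the concavity check effective, and only $F(\pi/2)$ is established here, the general angle being deferred to forthcoming work. Second, and more seriously: unlike a single sector, the angles of a polygon cannot be varied independently --- the vertices must trace out a closed polygon of the prescribed area --- so the problem is genuinely constrained, and the locality formula must be combined with a delicate analysis of how the non-corner (edge and global) term responds to admissible deformations. Showing that this term does not destroy the concavity that would otherwise single out the regular polygon is where the argument is most likely to fail, and may require input beyond the sector analysis of this paper.
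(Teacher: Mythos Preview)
The statement you are trying to prove is labelled as a \emph{Conjecture} in the paper, and the paper does not prove it. There is no proof to compare against. The only result in this direction that the paper establishes is Proposition~\ref{prop:mdr}, which treats the very special case of rectangles and does so by a completely different method: the spectrum is computed explicitly by separation of variables, the zeta function is related to that of a flat torus and expressed through the Dedekind $\eta$ function, and the maximization at the square follows from a direct calculus computation together with a modular identity for $\eta$. No variational Polyakov formula, no corner analysis, and no compactness argument enter.

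Your proposal is an honest outline of a possible attack, and you have correctly identified its two principal gaps. But be aware that these are not minor technicalities: the paper itself states that the general-angle corner function is deferred to future work, and explicitly flags (in the paragraph following Proposition~\ref{Pfswacs}) that extending the formula to polygons ``shall require additional considerations because one cannot change the angles independently.'' In other words, the obstacles you list are precisely the reasons the authors leave the statement as a conjecture. Your proposal is therefore a reasonable research programme, not a proof, and should not be presented as one.
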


We conclude this work by proving in \S \ref{s:rect} the following result which shows that for the case of rectangular domains, the conjecture holds.

 \begin{theorem} \label{thm:mdr} 
  Let $R$ be a rectangle of dimensions $L \times L^{-1}$.  Then the zeta regularized determinant is uniquely maximized for $L=1$, and tends to $0$ as $L \to 0$ or equivalently as $L \to \infty$.  
\end{theorem} 

\begin{acknowledgements}
The authors are grateful to Gilles Carron and Rafe Mazzeo for their interest in the project and many useful conversations. The second author is also grateful to Lashi Bandara for productive discussions.  We specially acknowledge Werner M\"uller's support and remarks.  Finally, we thank strongly the anonymous referee for insightful comments and constructive criticism.  
\end{acknowledgements}

\section{Geometric and analytic settings}  \label{s:prelim}
In this section we present the framework of this article and fix the geometric and analytic settings required to proof Theorem \ref{var-det}.  
\subsection{The determinant and Polyakov's formula} \label{standardarg} 
Let us describe briefly the classical deduction of Polyakov's formula, since we will use the same argument. 
Let $(M,g)$ be a smooth Riemannian surface with or without boundary. If $\pa M \neq \emptyset$, we consider the Dirichlet boundary condition, in which case $\Ker(\Delta_g)=\{0\}$.  

Let $H_g(t, z, z')$ denote the heat kernel associated to $\Delta_g$. It is the fundamental solution to the heat equation on $M$
\begin{eqnarray*}
(\Delta_g + \pa_t) H_g(t, z, z') &=& 0 \quad (t>0),\\
H_g(0, z, z') &=& \delta(z-z').
\end{eqnarray*}
The heat operator, $e^{-t\Delta_g}$ for $t>0$, is trace class, and the trace is given by
$$\tr(e^{-t{\Delta_g}}) = \int_M H_g(t, z, z) dz = \sum_{\lambda_k \geq 0} e^{-\lambda_k t}.$$
The zeta function and the heat trace are related by the Mellin transform
\begin{equation} \label{mellin} \zeta_{\Delta_g}(s) = \frac{1}{\Gamma(s)} \int_0 ^\infty t^{s-1} \tr(e^{-t{\Delta_g}}-P_{\Ker(\Delta_g)}) dt,  
\end{equation}
where $P_{\Ker(\Delta_g)}$ denotes the projection on the kernel of $\Delta_g$.

It is well known that the heat trace has an asymptotic expansion for small values of $t$ \cite{Gilkey}. 
This expansion has the form
$$\tr(e^{-t\Delta_g}) = a_0 t^{-1} + a_1 t^{\frac{-1}{2}} + a_2  + O(t^{\frac12}).$$
The coefficients $a_j$ are known as the heat invariants. They are given in terms of the curvature tensor and its derivatives as well as the geodesic curvature of the boundary in case of boundary.  By (\ref{mellin}) and the short time asymptotic expansion of the heat trace
$$\zeta_{\Delta_g} (s) = \frac{1}{\Gamma(s)} \left\{ \frac{a_0}{s-1} + \frac{a_1}{s-\frac{1}{2}}
+ \frac{a_2-\dim(\Ker(\Delta_g))}{s}+ e(s) \right\},$$
where $e(s)$ is an analytic function on $\re(s)> -1$. 
The regularity of $\zeta_{\Delta_g}$ at $s=0$ and hence the fact that the zeta regularized determinant of the Laplacian is well defined by (\ref{zdet}) both follow from the above expansion together with the fact that $\Gamma(s)$ has simple pole at $s=0$. 

Let $\{\sigma(\tau), \ \tau \in (- \epsilon, \epsilon)\}$ be a family of smooth conformal factors which depend on the parameter $\tau$ for some $\epsilon > 0$. Consider the corresponding family of conformal metrics $\{h_\tau = e^{2\sigma(\tau)} g,  \ \tau \in (- \epsilon, \epsilon)\}$.  To prove Polyakov's formula one first differentiates the spectral zeta function $\zeta_{\Delta_{h_\tau}}(s)$ with respect to $\tau$. This requires differentiating the trace of the heat operator. Then, after integrating by parts, one obtains
\begin{equation*} \label{dzeta}  \pa_\tau \zeta_{\Delta_{h_\tau}}(s) = \frac{s}{\Gamma(s)} \int_0 ^\infty  t^{s-1} \tr \big(2\cM_{\sigma'(\tau)}(e^{-t\Delta_{h_\tau}}-P_{\Ker(\Delta_{h_\tau})})\big) \, dt , \end{equation*}
where $\cM_{\sigma'(\tau)}$ denotes the operator multiplication by the function $\sigma'(\tau)$.  The integration by parts is again facilitated by the pole of $\Gamma(s)$ at $s=0$.

If the ma\-ni\-fold is compact, and the metrics and the conformal factors are smooth, then the operator $\cM_{\sigma'(\tau)} e^{-t\Delta_{h_\tau}}$ is trace class, and the trace behaves well for $t$ large.  As
$t\to 0$ the trace also has an asymptotic expansion of the form
\begin{multline*}
\Tr (\cM_{\sigma'(\tau)}e^{-t\Delta_{h_\tau}} ) \sim  a_0(\sigma'(\tau),h_\tau) t^{-1} + a_1(\sigma'(\tau),h_\tau)  t^{-\frac12}\\ 
+ a_2(\sigma'(\tau),h_\tau) -\dim(\Ker(\Delta_{h_\tau})) + O(t^{\frac12})
\end{multline*} 
The notation $a_j(\sigma'(\tau),h_\tau)$ is meant to show that these are the coefficients of the given trace, which depend on $\sigma'(\tau)$ and on the metric $h_\tau$. The dependence on the metric is through its associated heat operator. 

Therefore, the derivative of $\zeta_{\Delta_{h_\tau}}'(0)$ at $\tau = 0$ is simply given by
\begin{equation*} 
\left. \pa_{\tau} \zeta'_{\Delta_{h_\tau}}(0)\right|_{\tau=0} = 2 \ \big(a_2(\sigma'(0) ,h_0)-\dim(\Ker(\Delta_{h_0}))\big).
\end{equation*}
Polyakov's formula in (\ref{polclosed-v1}) is exactly this equation. 

\subsection{Euclidean sectors} 
Let $S_\gamma\subset \R^2$ be a finite circular sector with opening angle $\gamma\in (0,\pi)$ and radius $R$.
The Laplace operator $\Delta_\gamma$ with respect to the Euclidean metric is a priori defined on smooth functions with compact support within the open sector. It is well known that the Laplacian is not an essentially self
adjoint operator since it has many self adjoint extensions;  see e.g. \cite{domains} and \cite{les}.  The largest of these is the extension to
$$\Dom_{max}(\Delta_{\gamma}) = \{u\in L^2(S_\gamma) \vert \Delta_\gamma u \in L^2(S_\gamma) \}$$

For several reasons the most natural or standard self adjoint extension is the Friedrichs extension whose domain, $\Dom_F(\Delta_\gamma)$, is
defined to be the completion of
$$C_0^{\infty}(S_\gamma) \text{ w.r.t the norm } \|\nabla f\|_{L^2}$$
intersected with $\Dom_{max}$.  For a smooth domain $\Omega \subset \R^2$, it is well known that
$$\Dom_F(\Delta_{\Omega}) = H^1_0(\Omega) \cap H^2(\Omega).$$
The same is true if the sector is convex which we shall assume; see \cite[Theorem 2.2.3]{Grisvard} and
\cite[Chapter 3, Lema 8.1]{Ladyz-Ural}.

\begin{remark} \label{indep}  Let $S = S_{\gamma,R}$ be a planar circular sector of opening angle $
\gamma \in (0, \pi)$, radius $R>0$, and $S' = S_{\gamma', R'}$ be a circular sector of opening angle $\gamma' \in (0, \pi)$ and
radius $R'> 0$.
Then map $\Upsilon : S \to S'$ defined by $\Upsilon (\rho,\theta) = \left(\frac{R' \rho}{R},\frac{\gamma' \theta}{\gamma} \right) = (r,\phi)$
induces a bijection
$$\Upsilon^* : C_c ^\infty (S') \stackrel{\cong}{\longrightarrow} C_c ^\infty (S), \quad f \mapsto \Upsilon^*f := f \circ \Upsilon.$$
This bijection extends to the domains of the Friedrichs extensions of the corresponding Laplace operator.
Furthermore, under this map, the corresponding $L^2$ norms are equivalent, i.e., there exist constants $c, C>0$ such that
for any $f\in L^2(S')$,
$$c \| f \|_{L^2(S')} \leq \|\Upsilon^*f \|_{L^2(S)} \leq \ C \| f\|_{L^2(S')}.$$

The same holds for the norms on the corresponding Sobolev spaces $H^k$ for $k \geq 0$. In spite of inducing an equivalence between the different domains, this map is not useful for our purposes since it does not produce a conformal transformation of the Euclidean metric.
\end{remark}

To understand how the determinant of the Laplacian changes when the angle of the sector varies requires differentiating the spectral zeta function with respect to the angle \begin{equation}
\frac{\pa}{\pa \gamma} \ \zeta_{S_\gamma} (s) =
\frac{\pa}{\pa \gamma} \frac{1}{\Gamma(s)} \int_{0}^{\infty} t^{s-1} \Tr_{L^{2}(S_\gamma, g)}(e^{-t{\Delta_\gamma}}-P_{\Ker(\Delta_\gamma)}) dt.
\label{eq:derzetang}
\end{equation}

In order to do that we use conformal transformations. Varying the sector is
equivalent to varying a conformal family of metrics with singular conformal factors on a fixed domain. 

\subsubsection{Conformal transformation from one sector to another} \label{sect:confsectors}
Let $(r, \phi)$ denote polar coordinates on the sector $S_\gamma$.  We assume that the radii of all sectors are equal to one. Let $\alpha \in (0,\pi)$ be the angle at which we shall compute the derivative and $Q = S_\beta$ be a sector with opening angle $\beta \leq \alpha$. We use $(\rho, \theta)$ to denote polar coordinates on $Q$.

Consider the map
\begin{equation}
\Psi_\gamma  : Q \to S_{\gamma}, \quad (\rho, \theta) \mapsto \left( \rho^{\gamma/\beta}, \frac{\gamma \theta}{\beta}  \right)
= (r, \phi) \label{eq:cts} \end{equation}
The pull-back metric with respect to $\Psi_\gamma$ of the Euclidean metric $g$ on $S_{\gamma}$ is
\begin{eqnarray} h_{\gamma} := \Psi_\gamma ^* g &=& \left( \frac{\gamma}{\beta} \right)^2  \rho^{2 \gamma/\beta -2}
\left( d\rho^2 + \rho^2 d\theta^2 \right) = e^{ 2 \sigma_{\gamma}} \left( d\rho^2 + \rho^2 d\theta^2 \right) , \label{eq:pbmet}\\
\sigma_{\gamma} (\rho, \theta) &=& \log \left( \frac{\gamma}{\beta} \rho^{\gamma/\beta - 1} \right)
= \log\left( \frac{\gamma}{\beta} \right) + \left(\frac{\gamma}{\beta}-1\right) \log \rho
\label{eq:conf-factor}\end{eqnarray}
We will consider the family of metrics
$$\{h_\gamma, \gamma \in [\beta, \pi)\}$$
defined by (\ref{eq:pbmet}) on the fixed sector $Q = S_\beta$.

The area element on $Q$ with respect to the metric $h_\gamma$ is
\begin{equation} \label{Ahg} dA_{h_\gamma} = e^{2\sigma_\gamma} \rho d\rho d\theta = e^{2\sigma_\gamma} dA_g, \end{equation}
and the Laplace operator $\Delta_{h_\gamma}$ associated to the metric $h_\gamma$ is formally given by
\begin{equation} \label{Dhg} \Delta_{h_\gamma} = 
-\left( \frac{\beta}{\gamma} \right)^2  \rho^{-2 \gamma/\beta +2}
\left( \pa_\rho ^2 + \rho^{-1} \pa_\rho + \rho^{-2} \pa_\theta ^2 \right) =
e^{-2\sigma_\gamma} \Delta,  \end{equation}
where $\Delta := \Delta_{\beta}=  -\pa_\rho ^2 -\rho^{-1} \pa_\rho - \rho^{-2} \pa_\theta ^2 $ \ is the Laplacian on $(Q,g)$.

The transformation $\Psi_{\gamma}$ induces a map between the function spaces
$$ \Psi_{\gamma}^* : C_{c}^{\infty}(S_{\gamma}) \to C_{c}^{\infty}(Q), \quad f\mapsto \Psi_{\gamma}^*f := f\circ \Psi_{\gamma}.$$

\begin{prop} \label{cor-s2} For $\gamma\geq \beta$, the map $\Psi_{\gamma}^*$ is an isometry between the Friedrichs domain of $\Delta_{h_\gamma}$ on $Q$ 
and the domain of the Friedrichs extension of $\Delta_{\gamma}$ on the sector $S_{\gamma}$. Moreover, 
$$\Psi_{\gamma}^* ( \Dom(\Delta_{\gamma})) = \Dom(\Delta_{h_\gamma}) = H^2 (Q, h_\gamma) \cap H^{1}_0 (Q,h_{\gamma}),$$
with $\Delta_{h_\gamma}= e^{-2\sigma_{\gamma}}\Delta_{\beta}$.
\end{prop}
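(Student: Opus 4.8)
The plan is to establish the identity $\Psi_\gamma^*(\Dom(\Delta_\gamma)) = \Dom(\Delta_{h_\gamma})$ by reducing everything to the known smooth case on the open sector, then handling the boundary behavior separately via the regularity theory for convex domains. First I would observe that $\Psi_\gamma$ is a diffeomorphism from the open sector $\mathring{Q}$ onto the open sector $\mathring{S_\gamma}$, and that it is a conformal equivalence: by (\ref{eq:pbmet}) the pull-back metric is $h_\gamma = e^{2\sigma_\gamma} g$ with $\sigma_\gamma$ smooth on $\mathring{Q}$. On the open sectors, $\Psi_\gamma^*$ therefore intertwines the two Laplacians exactly as in (\ref{Dhg}), namely $\Delta_{h_\gamma} \Psi_\gamma^* = \Psi_\gamma^* \Delta_\gamma$ as operators on $C_c^\infty$, and more generally it gives a unitary-up-to-equivalence isomorphism of the maximal domains: $u \in \Dom_{max}(\Delta_\gamma)$ in $L^2(S_\gamma)$ if and only if $\Psi_\gamma^* u \in L^2(Q, dA_{h_\gamma})$ with $\Delta_{h_\gamma}\Psi_\gamma^* u \in L^2(Q, dA_{h_\gamma})$, because $dA_{h_\gamma} = \Psi_\gamma^* dA_g$ pulls the $L^2$-pairing across isometrically and $\Delta_{h_\gamma}$ is defined precisely to make the intertwining hold. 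So the maximal domains correspond bijectively under $\Psi_\gamma^*$.

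Next I would pin down the Dirichlet (Friedrichs) extension inside the maximal domain. On $S_\gamma$, since the sector is convex, $\Dom(\Delta_\gamma) = H^1_0(S_\gamma) \cap H^2(S_\gamma)$ by \cite[Theorem 2.2.3]{Grisvard} and \cite[Chapter 3, Lemma 8.1]{Ladyz-Ural}, as recalled in the excerpt. The Friedrichs extension is characterized intrinsically as the closure of $C_c^\infty$ in the form norm $\|\nabla f\|_{L^2}$ intersected with $\Dom_{max}$; I would check that this characterization is conformally invariant in dimension two. The key point is that the Dirichlet energy $\int |\nabla f|^2 \, dA$ is conformally invariant for surfaces: $\int_Q |\nabla_{h_\gamma} f|^2 \, dA_{h_\gamma} = \int_Q |\nabla_g f|^2 \, dA_g$ because the factors $e^{2\sigma_\gamma}$ from the metric on gradients and from the area element cancel in $n=2$. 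Hence $\Psi_\gamma^*$ carries the form-norm completion of $C_c^\infty(S_\gamma)$ isometrically onto the form-norm completion of $C_c^\infty(Q)$ — here one uses that $\Psi_\gamma^*$ maps $C_c^\infty(S_\gamma)$ onto $C_c^\infty(Q)$, already noted in the excerpt — and since it also carries $\Dom_{max}(\Delta_\gamma)$ onto $\Dom_{max}(\Delta_{h_\gamma})$, it carries the intersection, which is $\Dom(\Delta_{h_\gamma})$ by definition, onto the Friedrichs domain $\Dom(\Delta_\gamma)$. The identity $\Delta_{h_\gamma} = e^{-2\sigma_\gamma}\Delta_\beta$ on this domain is then just (\ref{Dhg}) read on $Q$.

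The last thing to verify is the equality $\Dom(\Delta_{h_\gamma}) = H^2(Q, h_\gamma) \cap H^1_0(Q, h_\gamma)$, where the Sobolev spaces are taken with respect to $h_\gamma$. For this I would transport the corresponding statement on $S_\gamma$, namely $\Dom(\Delta_\gamma) = H^2(S_\gamma) \cap H^1_0(S_\gamma)$, through $\Psi_\gamma^*$, using that $\Psi_\gamma$ is a smooth diffeomorphism of open sectors and an isometry between $(Q, h_\gamma)$ and $(S_\gamma, g)$, so it preserves the intrinsic Sobolev spaces $H^k$ defined via the respective metrics (these are the metric Sobolev spaces, not the flat ones on $Q$, which is why the statement is phrased with $h_\gamma$). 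I would be careful to note, following Remark \ref{indep}, that on $Q$ the $h_\gamma$-Sobolev norms are \emph{equivalent} to the flat ones only away from the boundary / for $\gamma$ not too large, so the statement is really about the metric $h_\gamma$; but since $\Psi_\gamma$ is a genuine isometry, the identification is clean at the level of $h_\gamma$-Sobolev spaces regardless. The main obstacle I anticipate is the careful justification that the conformal map, which is merely H\"older and not $C^1$ up to the corner when $\gamma$ is close to $\beta$, nonetheless preserves the Friedrichs domain: this is exactly the subtlety flagged in the paragraph preceding the proposition, and the resolution is to work with the \emph{intrinsic} (form-theoretic and metric) characterizations throughout rather than with the flat Sobolev spaces on $Q$, since those intrinsic objects are transported perfectly by the isometry $\Psi_\gamma$ even though the coordinate expression of the metric degenerates at the corner.
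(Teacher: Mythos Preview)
Your proposal is correct and takes essentially the same approach as the paper: both arguments rest on the fact that $\Psi_\gamma : (Q, h_\gamma) \to (S_\gamma, g)$ is by construction a Riemannian isometry, so that the metric Sobolev spaces $H^1_0$ and $H^2$ (and hence the Friedrichs domain $H^2 \cap H^1_0$) are carried bijectively by $\Psi_\gamma^*$. The paper organizes this as two explicit change-of-variables lemmas verifying the $H^1_0$ and $H^2$ isometries by hand, whereas you frame it via the form-theoretic characterization of the Friedrichs extension and the conformal invariance of the Dirichlet energy in dimension two; the content is the same.
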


This proposition is a direct consequence of the following two Lemmas.

\begin{lemma} The map $\Psi_{\gamma}$ defined by equation (\ref{eq:cts}) is an isometry $\Psi_{\gamma}^*$ between the Sobolev spaces
$H^{1}_0 (Q,h_{\gamma})$ and $H^{1}_0 (S_{\gamma},g_{\gamma})$. 
\label{lemma:H1hg}
\end{lemma}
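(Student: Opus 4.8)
The plan is to show that $\Psi_\gamma^*$ is an isometry between $H^1_0(Q,h_\gamma)$ and $H^1_0(S_\gamma,g_\gamma)$ by checking three things: that pullback intertwines the relevant Dirichlet energy forms, that it intertwines the $L^2$-norms (with respect to the metric area forms), and that it respects the closure of $C_c^\infty$. First I would recall that for a map $\Psi_\gamma:(Q,h_\gamma)\to(S_\gamma,g_\gamma)$ that is by construction an \emph{isometry of Riemannian metrics} --- indeed $h_\gamma=\Psi_\gamma^*g$ by equation (\ref{eq:pbmet}) --- the change of variables formula gives immediately
$$\int_{S_\gamma}|f|^2\,dA_{g}=\int_Q|\Psi_\gamma^*f|^2\,dA_{h_\gamma},\qquad \int_{S_\gamma}|\nabla^{g} f|^2_{g}\,dA_{g}=\int_Q|\nabla^{h_\gamma}(\Psi_\gamma^*f)|^2_{h_\gamma}\,dA_{h_\gamma},$$
for all $f\in C_c^\infty(S_\gamma)$, since both the volume form and the norm of the gradient are metric quantities preserved under an isometry. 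Thus $\|\Psi_\gamma^*f\|_{H^1(Q,h_\gamma)}=\|f\|_{H^1(S_\gamma,g_\gamma)}$ on the level of smooth compactly supported functions, where I use the equivalent $H^1$-norm $\|f\|^2 = \|f\|_{L^2}^2+\|\nabla f\|_{L^2}^2$ taken with respect to the metric in question.

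Next I would promote this to the Sobolev completions. By definition $H^1_0(S_\gamma,g_\gamma)$ is the closure of $C_c^\infty(S_\gamma)$ in the $H^1(S_\gamma,g_\gamma)$-norm, and likewise for $Q$. Since $\Psi_\gamma^*:C_c^\infty(S_\gamma)\to C_c^\infty(Q)$ is a bijection (it is induced by the diffeomorphism $\Psi_\gamma$ of the open sectors, which is proper on supports because $\Psi_\gamma$ and $\Psi_\gamma^{-1}$ are continuous and map the respective "far from the boundary" sets to each other) and an $H^1$-isometry on this dense subspace, it extends uniquely to an isometric isomorphism of the completions. Concretely: given $u\in H^1_0(Q,h_\gamma)$, pick $f_k\in C_c^\infty(Q)$ with $f_k\to u$; then $g_k:=(\Psi_\gamma^{-1})^*f_k\in C_c^\infty(S_\gamma)$ is Cauchy in $H^1(S_\gamma,g_\gamma)$ by the isometry, hence converges to some $g\in H^1_0(S_\gamma,g_\gamma)$, and one sets $(\Psi_\gamma^*)^{-1}u:=g$; independence of the approximating sequence and bijectivity are routine. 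This establishes the claimed isometry.

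The one genuine subtlety --- and the step I expect to be the main obstacle --- is that the metric $h_\gamma$ is only Hölder (not $C^1$) up to the boundary when $\gamma$ is close to $\beta$, so one must be slightly careful that $H^1_0(Q,h_\gamma)$ is well-defined and that the naive change-of-variables manipulations are legitimate. I would handle this by noting that the coefficient $e^{2\sigma_\gamma}=(\gamma/\beta)^2\rho^{2\gamma/\beta-2}$ appearing in $h_\gamma$ is continuous and positive on the open sector and the integrals $\int_Q |f|^2 e^{2\sigma_\gamma}\rho\,d\rho\,d\theta$ and $\int_Q |\nabla f|^2_{g}\,\rho\,d\rho\,d\theta$ converge absolutely for $f\in C_c^\infty(Q)$ (the singularity at $\rho=0$ is integrable since $2\gamma/\beta-2>-2$ and $f$ is smooth up to the corner after extension, or simply supported away from it); the change of variables $\Psi_\gamma$ is a smooth diffeomorphism of the open sectors with Jacobian computed from (\ref{eq:cts}), so the substitution is valid on $C_c^\infty$ with no boundary regularity needed. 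The point emphasized in the paper --- that domains transform correctly \emph{despite} the failure of $C^1$-regularity --- is precisely that one only ever needs the isometry on the dense smooth core and then takes completions, which is exactly the argument above; combined with the analogous statement for $H^2$ (the content of the companion lemma), this yields Proposition \ref{cor-s2}.
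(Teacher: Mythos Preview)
Your proof is correct and takes a cleaner, more conceptual route than the paper. The paper's proof carries out the verification by hand in polar coordinates: it writes out $|\nabla_g f|^2 = (\partial_r f)^2 + r^{-2}(\partial_\phi f)^2$, applies the chain rule to express $\partial_r f$ and $\partial_\phi f$ in terms of $\partial_\rho(\Psi_\gamma^* f)$ and $\partial_\theta(\Psi_\gamma^* f)$, and then checks directly that the Jacobian factors combine with the conformal weight $e^{2\sigma_\gamma}$ to produce exactly $\int_Q |\nabla_{h_\gamma}(\Psi_\gamma^* f)|^2\,dA_{h_\gamma}$. You instead observe that since $h_\gamma = \Psi_\gamma^* g$ by definition, $\Psi_\gamma$ is tautologically a Riemannian isometry of the open sectors, so preservation of the $L^2$-norm and the Dirichlet energy on $C_c^\infty$ is automatic, and the passage to completions is routine functional analysis. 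What your approach buys is that the lemma becomes essentially a formality once one recognizes $h_\gamma$ as a pullback metric; what the paper's explicit computation buys is that the reader sees the mechanism in coordinates, which is reused (and genuinely needed) in the companion Lemma~\ref{le:3} and in Proposition~\ref{prop-nest}, where one must track powers of $\rho$ explicitly. Your handling of the regularity subtlety---working on $C_c^\infty$ of the open sector and then completing---is exactly the right way to sidestep the failure of $C^1$-regularity at the corner, and matches the spirit of the paper's remarks preceding Proposition~\ref{cor-s2}.
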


\begin{proof}
As before, let $r,\phi$ denote the coordinates in $S_\gamma$, and let $\rho, \theta$ denote the coordinates in $Q$.
The volume element in $Q$ and the Laplacian for the metric $h_{\gamma}$ are given in (\ref{Ahg}) and (\ref{Dhg}), respectively.

The transformation $\Psi_\gamma^*$ extends to the $L^2$ spaces.
The fact that $\Psi_\gamma^*$ is an isometry between $L^{2}(S_\gamma,g)$ and $L^{2}(Q,h_\gamma)$ follows from a standard change of variables computation.  
For $f:S_\gamma\to \R$, we compute that the $L^2$ norms of $f \in L^{2}(S_\gamma,g)$ and $\Psi_\gamma^* f$ on $L^{2}(Q,h_\gamma)$ coincide: 
$$\int_{S_\gamma} |f(r,\phi)|^2 rdrd\phi = \int_{Q} |f\circ \Psi_\gamma|^2 \left(\frac{\gamma}{\beta}\right)^2 \
\rho^{2\frac{\gamma}{\beta}-1} d\rho d\theta = \int_{Q} |\Psi_\gamma^*f|^2 e^{2 \sigma_{\gamma}} \rho d\rho d\theta.$$

Next let $f\in H^{1}_0 (S_\gamma,g)$.  To prove that $\Psi_\gamma^*f \in H^{1}_0 (Q,h_\gamma)$ we show that the $L^2$-norms $\Vert df \Vert_{L^2(S_\gamma,g)}$ and  $\Vert df \circ d\Psi_\gamma \Vert_{L^2(Q,h_\gamma)}$ are identical.  Since $|df|^{2}_g = \vert \nabla_g f \vert^{2} = g^{lj}(\partial_l f)(\partial_j f)$, 
$$\int_{S_\gamma} \vert \nabla_g f \vert^{2} dA_g = \int_Q \left( \left(
\left(\frac{\partial f}{\partial r}\right)^2 + \frac{1}{r^2}\left(\frac{\partial f}{\partial \phi}\right)^2\right) \circ \Psi_\gamma (\rho, \theta)\right) e^{2\sigma_{\gamma}} \rho d\rho d\theta.$$

Using $\Psi_\gamma^*f=f\circ \Psi_\gamma (\rho, \theta)$ we have 
$$\frac{\partial f}{\partial r}(\Psi_\gamma(\rho,\theta)) = \frac{\beta}{\gamma} \rho^{1-\gamma/\beta}
\frac{\partial \Psi_\gamma^*f}{\partial \rho}, \quad \frac{\partial f}{\partial \phi}(\Psi_\gamma(\rho,\theta))
= \frac{\beta}{\gamma} \frac{\partial \Psi_\gamma^*f}{\partial \theta}.$$
Substituting above, we obtain

\begin{eqnarray*}\int_{S_\gamma} \vert \nabla_g f \vert^{2} dA_g
&=&  \int_Q \left(
\left( \frac{\beta}{\gamma} \rho^{1-\gamma/\beta} \frac{\partial \Psi_\gamma^*f}{\partial \rho}\right)^2
 + \rho^{-2\gamma/\beta} \left(\frac{\beta}{\gamma} \frac{\partial \Psi_\gamma^*f}{\partial \theta}\right)^2
\right) e^{2\sigma_{\gamma}}\rho d\rho d\theta \\
&=&  \int_Q \left(\frac{\beta}{\gamma} \rho^{1-\gamma/\beta}\right)^2 \left(
\left(\frac{\partial \Psi_\gamma^*f}{\partial \rho}\right)^2
 + \frac{1}{\rho^2}\left(\frac{\beta}{\gamma} \frac{\partial \Psi_\gamma^*f}{\partial \theta}\right)^2
\right) e^{2\sigma_{\gamma}}\rho d\rho d\theta \\
&=& \int_Q
e^{-2\sigma_{\gamma}}\left(\left(\frac{\partial \Psi_\gamma^*f}{\partial \rho}\right)^2 + \frac{1}{\rho^2}
\left(\frac{\partial \Psi_\gamma^*f}{\partial \theta}\right)^2\right)
e^{2 \sigma_{\gamma}} \rho d\rho d\theta\\
&=& \int_{Q}  \vert \nabla_{h_\gamma} \Psi^*f \vert^{2} dA_{h_\gamma}.
\end{eqnarray*}
This completes the proof.
\end{proof}


\begin{lemma} \label{le:3} The map $\Psi_{\gamma}^*$ is an isometry between the Sobolev spaces
$H^{2}(Q,h_{\gamma})$ and $H^{2}(S_{\gamma},g )$. 
A function $f \in H^{2}(Q,h_{\gamma})$ if
and only if $\Psi^*f \in H^{2}(S_\gamma,g)$.
\end{lemma}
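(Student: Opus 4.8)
The plan is to follow the same strategy as in the proof of Lemma \ref{lemma:H1hg}, transporting the $H^2$ condition through the change of variables $\Psi_\gamma$ and checking that all the weights that appear are bounded above and below when $\gamma \geq \beta$. First I would recall that by Lemma \ref{lemma:H1hg} the map $\Psi_\gamma^*$ is already an isometry on $L^2$ and on $H^1_0$, so it remains only to control the second-order derivatives. The natural way to measure the $H^2(Q,h_\gamma)$ norm is via the intrinsic Hessian of $h_\gamma$, i.e.\ $\|f\|_{H^2(Q,h_\gamma)}^2 = \|f\|_{L^2}^2 + \|\nabla_{h_\gamma} f\|_{L^2}^2 + \|\nabla^2_{h_\gamma} f\|_{L^2}^2$, and similarly for $S_\gamma$ with the (flat) Euclidean metric $g$. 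Since $\Delta_{h_\gamma} = e^{-2\sigma_\gamma}\Delta_\beta$ and $\Delta_{h_\gamma}\Psi_\gamma^* f = \Psi_\gamma^*(\Delta_\gamma f)$, one already gets equivalence of the $L^2$ norms of the Laplacians; the point is to upgrade this to equivalence of full $H^2$ norms, which on a convex sector is legitimate because, as recalled in the excerpt after Remark \ref{indep}, $\Dom_F(\Delta) = H^1_0 \cap H^2$ and elliptic regularity gives $\|f\|_{H^2} \simeq \|f\|_{L^2} + \|\Delta f\|_{L^2}$ on such domains (citing \cite{Grisvard}, \cite{Ladyz-Ural}).

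Concretely, I would write out the chain rule for the second derivatives of $\Psi_\gamma^* f$ in the coordinates $(\rho,\theta)$ in terms of the first and second derivatives of $f$ in $(r,\phi)$, exactly as the first derivatives were computed in Lemma \ref{lemma:H1hg}. Differentiating $\partial_r f(\Psi_\gamma(\rho,\theta)) = \tfrac{\beta}{\gamma}\rho^{1-\gamma/\beta}\,\partial_\rho(\Psi_\gamma^* f)$ once more produces terms involving $\partial_\rho^2(\Psi_\gamma^* f)$ with weight $\rho^{2-2\gamma/\beta}$ and lower-order terms with weight $\rho^{1-2\gamma/\beta}$ times $\partial_\rho(\Psi_\gamma^* f)$; analogous formulas hold for the mixed and $\phi\phi$ derivatives. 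Then I would substitute into $\int_{S_\gamma}|\nabla^2_g f|^2\,dA_g$, change variables via $dA_g = e^{2\sigma_\gamma}\rho\,d\rho\,d\theta$, and collect the powers of $\rho$. The key observation is that for $\gamma \geq \beta$ the exponent $\gamma/\beta - 1 \geq 0$, so all weights of the form $\rho^{k(\gamma/\beta-1)}$ with $k \geq 0$ are bounded by a constant on the bounded sector $Q$; combined with the already-established $L^2$ and $H^1_0$ isometries, this shows $\Psi_\gamma^* f \in H^2(Q,h_\gamma)$ with controlled norm, and conversely (running the substitution the other way, using that $\rho$ is also bounded below away from $0$ only on the complement of a neighborhood of the corner — see the remark below).

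I expect the main obstacle to be the behavior at the corner $\rho = 0$, where the conformal factor $\sigma_\gamma$ and hence the metric $h_\gamma$ degenerate (or blow up) and where the pure power weights $\rho^{1-2\gamma/\beta}$ appearing in the lower-order chain-rule terms are \emph{singular} as $\rho\downarrow 0$ when $\gamma > \beta$. The way around this is that these singular weights always come paired with derivatives of $f$ evaluated at $r = \rho^{\gamma/\beta}\to 0$, and one must use that $f \in \Dom_F(\Delta_\gamma)$ vanishes at the corner (Dirichlet condition) together with the standard Friedrichs-domain regularity on convex sectors to absorb the singularity; equivalently, one works with the intrinsic Hessian $\nabla^2_{h_\gamma}$, whose Christoffel-symbol corrections exactly cancel the spurious singular coordinate terms, so that $|\nabla^2_{h_\gamma}\Psi_\gamma^* f|_{h_\gamma}^2\,dA_{h_\gamma}$ pulls back cleanly to $|\nabla^2_g f|_g^2\,dA_g$ just as in the first-order case. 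Once this cancellation is made explicit the equivalence of the $H^2$ norms — and hence the stated isometry and the biconditional $f\in H^2(Q,h_\gamma)\iff \Psi_\gamma^* f\in H^2(S_\gamma,g)$ — follows, which together with Lemma \ref{lemma:H1hg} yields Proposition \ref{cor-s2}.
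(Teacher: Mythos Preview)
Your proposal is correct, and in fact your first paragraph already contains the paper's entire argument. The paper does not attempt to control the full Hessian $\nabla^2_{h_\gamma}$ at all: it simply computes, via the chain rule for $\partial_\rho^2(\Psi_\gamma^* f)$, that
\[
\int_Q |\Delta_{h_\gamma}\Psi_\gamma^* f|^2\,dA_{h_\gamma} \;=\; \int_{S_\gamma} |\Delta_g f|^2\,dA_g,
\]
and stops there. In other words, the paper is implicitly taking the graph norm $\|f\|_{L^2}+\|\Delta f\|_{L^2}$ as the $H^2$ norm on both sides --- exactly the equivalence $\|f\|_{H^2}\simeq \|f\|_{L^2}+\|\Delta f\|_{L^2}$ you cite from \cite{Grisvard}, \cite{Ladyz-Ural} for convex sectors. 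Your observation that $\Delta_{h_\gamma}\Psi_\gamma^* f = \Psi_\gamma^*(\Delta_\gamma f)$, combined with the $L^2$ isometry from Lemma~\ref{lemma:H1hg}, is already the whole proof.

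Everything in your second and third paragraphs --- writing out all second partials, tracking the singular weights $\rho^{1-2\gamma/\beta}$, invoking Christoffel-symbol cancellations in the intrinsic Hessian, worrying about the corner --- is extra work that the paper does not do and does not need. You are right that those singular weights would appear if one insisted on matching the \emph{coordinate} second derivatives term by term; the paper sidesteps this entirely by working only with the Laplacian, for which the cross terms combine and the troublesome lower-order pieces cancel automatically (you can see this in the paper's explicit formula for $\partial_\rho^2\Psi_\gamma^* f$). So your concern about the corner is legitimate for the route you sketch, but the paper never walks down that route.
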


\begin{proof}
Let $f \in H^{2}(Q,h_\gamma)$. By definition $\Psi_\gamma^*f = (f\circ \Psi_\gamma) (\rho, \theta)$, so 
$$|\Delta_{h_\gamma} \Psi_\gamma^*f|^2 = \left(\frac{\beta}{\gamma}\right)^2 \rho^{-\frac{4\gamma}{\beta}+4}
\left(\frac{\partial^2 \Psi_\gamma^*f}{\partial \rho^2} + \frac{1}{\rho}\frac{\partial \Psi_\gamma^*f}{\partial \rho}
+ \frac{1}{\rho^2} \frac{\pa^2\Psi_\gamma^*f}{\pa \theta^2}\right)^2.$$
Since
$$\frac{\partial^2 \Psi_\gamma^*f}{\partial \rho^2}
= \left( \frac{\gamma}{\beta} \right)^2 \rho^{2 \frac{\gamma}{\beta}-2} \frac{\partial^2 f}{\partial r^2} (\Psi_\gamma(\rho, \theta)) +
\frac{\gamma}{\beta} \left(\frac{\gamma}{\beta} -1\right) \rho^{\frac{\gamma}{\beta}-2} \frac{\partial f}{\partial r} (\Psi_\gamma(\rho, \theta)), $$
it is easy to see that
$$\int_{Q}|\Delta_{h_\gamma} \Psi_\gamma^*f|^2 dA_{h_\gamma} = \int_{Q} (|\Delta_g f|^2 \circ \Psi_\gamma)(\rho,\theta) e^{2\sigma_{\gamma}} dA_g
= \int_{S_\gamma} |\Delta_g f|^2 dA_g $$
where the last equality follows from the standard change of variables, and $g$ denotes the Euclidean metric on both $Q$ and
$S_\gamma$.
\end{proof}

\begin{example} Let $\gamma\in [\beta,\pi)$, and $h_\gamma$ be as above. Let $\varphi(\rho,\theta):= \rho^x \sin(k\pi \theta/\beta)$. It is easy to see that  \begin{itemize} \item $\varphi \in L^{2}(Q,h_\gamma) \Leftrightarrow  x> -\gamma/\beta$ \item $\varphi \in H^{1}(Q,h_\gamma) \Leftrightarrow x>0$\item $\varphi \in H^{2}(Q,h_\gamma) \Leftrightarrow x > \frac{\gamma}{\beta}$.\end{itemize} \end{example}

The example above shows that the domain of the Laplacian depends on the angle, and in particular, it will be different for different angles.  As a consequence several problems appear here that distinguish this case from the classical smooth case and force us to go into the details of the differentiation process.

\subsubsection{Domains of the Laplace operators} 
Even though the description of the domains of the family of Laplace operators $\{\Delta_{h_\gamma}, \gamma\geq \beta\}$ given in the previous section is useful for our purposes, it is not enough. 
Unlike the smooth case, this family do not act on a single fixed Hilbert space when $\gamma$ varies but instead we will demonstrate below that they act on a nested family of weighted, so-called ``b''-Sobolev spaces.

\begin{defn} The $b$-vector fields on $(S_\gamma,g)$, denoted by $\calV_b$, are the $\cC^\infty$ span of the vector fields 
$$\calV_b := \cC^{\infty} \, \textrm{ span of } \, \{r\del_r, \del_\phi\},$$
where $\cC^{\infty}$ means that the coefficient functions are smooth up to the boundary.
For $m \in \N$, the $b$-Sobolev space is defined as
$$H^m _b := \left\{ f \ |\ V_1 \ldots V_j f \in L^2(S_\gamma,g) \, \forall j \leq m, \, \forall \, V_1, \ldots, V_j \in \calV_b \right\},$$
and $H^0 _b = L^2(S,g)$.  The weighted $b$-Sobolev spaces are 
$$r^x H^m _b = \{ f \ |\ \exists v \in  H^m _b, \quad f = r^x v \}.$$
\end{defn}

We first apply results due to several authors, including but not limited to, Mazzeo \cite{etdeo1} Theorem 7.14 and Lesch \cite{les}  Proposition 1.3.11.  

\begin{prop} \label{prop:domdesc2} The Friedrichs domain of the Laplace operator $\Delta_{\gamma}$ on the sector $S_\gamma$ with Dirichlet boundary condition is
$$\Dom(\Delta_{\gamma}) = r^2 H^2 _b \cap H^1 _0(S_\gamma,g).$$ 
\end{prop}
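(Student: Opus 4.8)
The plan is to identify $\Dom(\Delta_\gamma)$ by combining the abstract elliptic regularity for conical operators with the classical convex-domain result already quoted in the excerpt. The starting point is the general structure theorem for the Friedrichs domain of a Laplace-type operator on a manifold with an isolated conical point (Mazzeo \cite{etdeo1} Theorem 7.14, Lesch \cite{les} Proposition 1.3.11): near the corner the maximal domain decomposes as $r^2 H^2_b$ plus a finite-dimensional space of ``exceptional'' or ``singular'' solutions spanned by functions of the form $r^{\pm \nu_j}$ (and $r^{\nu_j}\log r$ in resonant cases), where the exponents $\nu_j = j\pi/\gamma$ come from the eigenvalues $(j\pi/\gamma)^2$ of the Laplacian on the cross-section interval $[0,\gamma]$ with Dirichlet conditions. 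The Friedrichs extension is characterized as the one that discards all the ``large'' singular terms: its domain is $r^2 H^2_b$ together with only those $r^{\nu_j}$ that already lie in $H^1$, i.e.\ those with $\nu_j > 0$, which here is \emph{all} of them since $\gamma \in (0,\pi)$ forces $\nu_1 = \pi/\gamma > 1$. Away from the corner, standard interior and boundary elliptic regularity (the straight edges and the circular arc are smooth) shows a function in the maximal domain is $H^2$ there, so no extra conditions arise from the smooth part of $\partial S_\gamma$.

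First I would set up polar coordinates and write $\Delta_\gamma = -(r\partial_r)^2 r^{-2} - r^{-2}\partial_\phi^2$ in $b$-form, so that $\Delta_\gamma : r^2 H^2_b \to L^2$ is the model conical operator. Second, I would invoke the cone calculus to get the decomposition $\Dom_{\max}(\Delta_\gamma) = r^2 H^2_b \oplus E$ near $r=0$, with $E$ spanned by $r^{-\nu_j}$ and $r^{\nu_j}$ (times $\sin(j\pi\phi/\gamma)$) for the finitely many $\nu_j = j\pi/\gamma \in (0,1]$ — and observe this index set is \emph{empty} because $\gamma < \pi$ implies every $\nu_j \ge \pi/\gamma > 1$. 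Third, I would use the convexity hypothesis together with \cite[Theorem 2.2.3]{Grisvard} and \cite[Ch.~3, Lemma 8.1]{Ladyz-Ural}, already cited in the excerpt, which give $\Dom_F(\Delta_\gamma) = H^1_0(S_\gamma,g) \cap H^2(S_\gamma,g)$; combining with the local description and the inclusion $r^2 H^2_b \cap \{f = O(r^{\pi/\gamma})\} \subset H^2$ near the corner, plus the Dirichlet condition on the two radial edges (which is exactly the condition that selects $\sin(j\pi\phi/\gamma)$ in the $\phi$-expansion), yields $\Dom(\Delta_\gamma) = r^2 H^2_b \cap H^1_0(S_\gamma,g)$. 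Fourth, I would check the reverse inclusion: any $f \in r^2 H^2_b \cap H^1_0$ satisfies $\Delta_\gamma f \in L^2$ and the Dirichlet condition, and lands in the Friedrichs domain because $r^2 H^2_b$ functions decay fast enough at the corner to have finite $\|\nabla f\|_{L^2}$ and to be approximable by $C_0^\infty(S_\gamma)$.

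I expect the main obstacle to be matching conventions and making the local-to-global gluing airtight: the cited abstract theorems are phrased for closed manifolds with cone points (or for the half-line model), whereas $S_\gamma$ is a sector with boundary, so one must carefully fold in the Dirichlet boundary condition on the two straight edges, treat the regular boundary (the arc and the edges away from the vertex) by ordinary elliptic boundary regularity, and verify that the weighted space $r^2 H^2_b$ restricted away from the corner coincides with the ordinary $H^2$. A secondary subtlety is confirming that no logarithmic resonance occurs — this is automatic here since the would-be resonant exponent $\nu = 0$ is excluded from the index set — and that the borderline case $\nu_j = 1$ (which would need $\gamma = \pi$) genuinely does not arise under the standing hypothesis $\gamma \in (0,\pi)$, so the clean statement $r^2 H^2_b \cap H^1_0$ holds without an extra finite-dimensional correction term. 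Once these bookkeeping points are handled, the identification of the domain is essentially forced by the indicial roots.
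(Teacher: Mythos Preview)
Your proposal is correct and follows essentially the same route as the paper: invoke the cone-operator structure theorem (Mazzeo \cite{etdeo1} Theorem 7.14, Lesch \cite{les} Proposition 1.3.11), compute the indicial roots $\pm j\pi/\gamma$ from the Dirichlet eigenvalues on the link $[0,\gamma]$, and observe that $\gamma<\pi$ forces all of them outside the critical interval $]{-}1,1]$, so no singular terms survive and the Friedrichs domain is exactly $r^2 H^2_b \cap H^1_0$. The paper's proof is in fact terser than yours---it does not separately invoke the Grisvard/Ladyzhenskaya--Ural'tseva convexity result inside this argument, nor does it spell out the reverse inclusion or the local-to-global gluing you flag as bookkeeping---so your plan is a slightly more cautious version of the same computation.
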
 

\begin{proof} 
By equation (19) in \cite{mm} and Theorem 7.14 \cite{etdeo1} (c.f. \cite{les} Proposition 1.3.11), any element in the domain of the Friedrichs extension of Laplacian $\Delta_{\gamma}$ has a partial expansion near $r=0$ of the form
$$\sum_{\gamma_j \in ]-n/2, -n/2+2]} c_j r^{\gamma_j} \psi_j (\phi) + w, \quad w \in r^2 H^2 _b.$$
In our case the dimension $n=2$, and the indicial roots $\gamma_j$ are given by
$$\gamma_j = \pm \sqrt{\mu_j},$$
where $\mu_j$ is an eigenvalue of the Laplacian on the link of the singularity, and $\psi_j$ is the eigenfunction with eigenvalue $\mu_j$.  The link is in this case $[0, \gamma]$ with Dirichlet boundary condition.  These eigenvalues are therefore $\mu_j = \frac{j^2 \pi^2}{\gamma^2}$ with $j \in \N$, $j\geq 1$.  In particular, there are \em no \em indicial roots in the critical interval $]-1, 1]$, because $\gamma < \pi$.  Taking into account the Dirichlet boundary condition away from the singularity, it follows that the domain of the Laplace operator is precisely given by 
$$r^2 H^2 _b (S_\gamma) \cap H^1 _0 (S_\gamma,g).$$
\end{proof} 

The operators $\Delta_{h_\gamma}$, albeit each defined on functions on $Q$, have domains which are defined in terms of $L^2(Q, dA_{h_\gamma})$.  In particular, the area forms depend on $\gamma$.  Consequently, in order to fix a single Hilbert space on which our operators act, we use the following maps
\begin{eqnarray}
\Phi_\gamma &:& L^2(Q, dA_{h_\gamma}) \to L^2 (Q, dA), \quad f \mapsto e^{\sigma_\gamma} f = \frac{\gamma}{\beta} \rho^{\gamma/\beta -1} f; \label{eq:t2domains}\\
\Phi_\gamma ^{-1} &:& L^2 (Q, dA) \to L^2 (Q, dA_{h_\gamma}), \quad f \mapsto e^{-\sigma_\gamma} f = \frac{\beta}{\gamma}\rho^{-\gamma/\beta + 1}f. \notag
\end{eqnarray}
Each $\Phi_\gamma$ is an isometry of $L^2(Q, dA_{h_\gamma})$ and $L^2 (Q, dA)$, since
$$\int_Q f^2 dA_{h_\gamma} = \int_Q f^2 e^{2\sigma_\gamma} dA = \int_Q (\Phi_\gamma f)^2 dA.$$

\begin{prop} \label{prop-nest} For all $\gamma \in [\beta, \pi)$, we have
$$\Phi_\gamma \left(\Dom (\Delta_{h_\gamma}) \right) \subseteq \rho^{2\gamma/\beta} H^2 _b (Q, dA) \cap H^1 _0 (Q, dA).$$
Moreover, 
$$\Phi_\gamma \left( \Dom (\Delta_{h_\gamma}) \right) \subset \Phi_{\gamma'} \left( \Dom (\Delta_{h_{\gamma'}})\right), \quad \gamma' < \gamma.$$
\end{prop}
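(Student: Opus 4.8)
The plan is to prove the two assertions in turn, using the explicit form of the maps and domains already established. For the first inclusion, I would start from Proposition \ref{cor-s2}, which identifies $\Dom(\Delta_{h_\gamma}) = H^2(Q,h_\gamma) \cap H^1_0(Q,h_\gamma)$, and track how the weighting isometry $\Phi_\gamma$ of \eqref{eq:t2domains} transports this space into $L^2(Q,dA)$. The key computation is to rewrite the $b$-Sobolev conditions: a function $u \in H^2(Q,h_\gamma)$ means $\Delta_{h_\gamma} u \in L^2(Q,dA_{h_\gamma})$ together with $u \in H^1(Q,h_\gamma)$, and by Proposition \ref{prop:domdesc2} applied via the correspondence $\Psi_\gamma^*$ (Proposition \ref{cor-s2}), $\Psi_\gamma^* {}^{-1} u$ lies in $r^2 H^2_b(S_\gamma) \cap H^1_0(S_\gamma,g)$. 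Then I would pull the weight $r^2 = \rho^{2\gamma/\beta}$ back through $\Psi_\gamma$ (which sends $r = \rho^{\gamma/\beta}$) and absorb the conformal factor $e^{\sigma_\gamma} = \frac{\gamma}{\beta}\rho^{\gamma/\beta-1}$ coming from $\Phi_\gamma$. The net effect should be: the leading weight $\rho^{2\gamma/\beta}$ in $S_\gamma$-coordinates becomes $\rho^2$ after $\Psi_\gamma$, and then multiplication by $e^{\sigma_\gamma}$ restores a factor $\rho^{\gamma/\beta - 1}$, landing us in $\rho^{2\gamma/\beta} H^2_b(Q,dA) \cap H^1_0(Q,dA)$. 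I expect the cleanest route is to verify directly, using the $b$-vector fields $\rho\partial_\rho$ and $\partial_\theta$, that if $v \in r^2 H^2_b(S_\gamma)$ then $e^{\sigma_\gamma}(\Psi_\gamma^{-1})^* v \in \rho^{2\gamma/\beta} H^2_b(Q)$, since $\rho\partial_\rho$ and $\partial_\theta$ are invariant (up to constants) under $\Psi_\gamma$ and commute with multiplication by a pure power of $\rho$ modulo lower-order $b$-terms.

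\textbf{The nesting statement.} For the second assertion, $\Phi_\gamma(\Dom(\Delta_{h_\gamma})) \subset \Phi_{\gamma'}(\Dom(\Delta_{h_{\gamma'}}))$ for $\gamma' < \gamma$, the essential point is a comparison of the two weighted $b$-Sobolev spaces $\rho^{2\gamma/\beta} H^2_b(Q,dA) \cap H^1_0$ and $\rho^{2\gamma'/\beta} H^2_b(Q,dA) \cap H^1_0$. Since $\gamma' < \gamma$ gives $2\gamma'/\beta < 2\gamma/\beta$, and near $\rho = 0$ a larger power of $\rho$ is a \emph{stronger} decay condition, we have the elementary inclusion $\rho^{2\gamma/\beta} H^2_b \subseteq \rho^{2\gamma'/\beta} H^2_b$ (multiply and divide by $\rho^{2(\gamma-\gamma')/\beta}$, which is bounded on the bounded sector $Q$ and whose $b$-derivatives are again bounded smooth multiples of itself). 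However, the first inclusion of the Proposition is only an inclusion, not an equality, so to get the nesting one must instead exhibit, for each $u \in \Dom(\Delta_{h_\gamma})$, a preimage under $\Phi_{\gamma'}$ lying in $\Dom(\Delta_{h_{\gamma'}})$; equivalently one shows $\Phi_{\gamma'}^{-1}\Phi_\gamma u \in \Dom(\Delta_{h_{\gamma'}})$. I would compute $\Phi_{\gamma'}^{-1}\Phi_\gamma = \mathcal{M}_{e^{\sigma_\gamma - \sigma_{\gamma'}}}$, a multiplication by $\frac{\gamma\beta'}{\gamma'\beta}\,\rho^{(\gamma-\gamma')/\beta}$ — wait, more carefully $e^{\sigma_\gamma - \sigma_{\gamma'}} = \frac{\gamma}{\gamma'}\rho^{(\gamma-\gamma')/\beta}$ — and then verify this maps $\Dom(\Delta_{h_\gamma})$ into $\Dom(\Delta_{h_{\gamma'}})$ by checking the defining conditions $u \in H^2 \cap H^1_0$ for the metric $h_{\gamma'}$, using the explicit forms of $\Delta_{h_{\gamma'}}$ in \eqref{Dhg} and the area forms in \eqref{Ahg}.

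\textbf{Main obstacle.} The hard part will be the regularity bookkeeping at the corner $\rho = 0$: one must confirm that multiplying by the extra positive power $\rho^{(\gamma-\gamma')/\beta}$ genuinely lands in the domain for the smaller angle, which requires that the \emph{leading} asymptotic behavior allowed in $\Dom(\Delta_{h_{\gamma'}})$ is weaker (smaller power of $\rho$) than what $\Dom(\Delta_{h_\gamma})$ forces after this multiplication. Here the fact that there are no indicial roots in the critical interval $]-1,1]$ (because all angles are $< \pi$, as used in Proposition \ref{prop:domdesc2}) is what makes the partial expansions clean and the weighted-space descriptions sharp, so the power counting closes. A secondary subtlety is the boundary condition along the two straight edges: since $\Psi_\gamma$ and the multiplication operators preserve vanishing at $\theta = 0, \beta$, membership in $H^1_0$ is preserved, but this should be remarked explicitly. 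I would organize the proof as: (i) establish the first inclusion via the chain $\Phi_\gamma \circ \Psi_\gamma^{*-1}$ and Proposition \ref{prop:domdesc2}; (ii) identify $\Phi_{\gamma'}^{-1}\Phi_\gamma$ as multiplication by a positive power of $\rho$; (iii) check this multiplication respects the domain conditions for $h_{\gamma'}$, with the power count at $\rho = 0$ being the crux.
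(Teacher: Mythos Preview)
Your plan matches the paper's proof: transport $\Dom(\Delta_\gamma) = r^2 H^2_b(S_\gamma)\cap H^1_0$ through $\Psi_\gamma^*$ and then $\Phi_\gamma$, track the powers of $\rho$, and for the nesting compute $\Phi_{\gamma'}^{-1}\Phi_\gamma = \cM_{(\gamma/\gamma')\rho^{(\gamma-\gamma')/\beta}}$ and check it lands in $\Dom(\Delta_{h_{\gamma'}})$.

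One point you underestimate. You describe the $H^1_0$ verification as a ``secondary subtlety'' about Dirichlet vanishing on the straight edges $\theta=0,\beta$, but that is not where the work is. Since $\Phi_\gamma$ multiplies by $e^{\sigma_\gamma}=\tfrac{\gamma}{\beta}\rho^{\gamma/\beta-1}$, whose radial derivative is of order $\rho^{\gamma/\beta-2}$, the term $(\partial_\rho e^{\sigma_\gamma})f$ appearing in $\nabla(\Phi_\gamma f)$ is not automatically in $L^2(Q,dA)$ from $f\in H^1_0(Q,dA_{h_\gamma})$ alone. The paper handles this by an explicit integral estimate that \emph{uses the weighted $b$-regularity} $f\in\rho^{\gamma/\beta+1}H^2_b(Q,dA)$ (equivalently $\Psi_\gamma^*f\in r^2 H^2_b$) to absorb the singular factor, and similarly for the cross term via Cauchy--Schwarz. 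The same issue recurs when checking $\Phi_{\gamma'}^{-1}\Phi_\gamma f\in H^1_0(Q,dA_{h_{\gamma'}})$. So your step (iii) should include these gradient estimates near $\rho=0$, not just boundary vanishing.

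A minor correction to your intermediate bookkeeping: the chain is
\[
\Psi_\gamma^*\big(r^2 H^2_b(S_\gamma)\big)=\rho^{2\gamma/\beta}H^2_b(Q,dA_{h_\gamma})=\rho^{\gamma/\beta+1}H^2_b(Q,dA),
\]
and then $\Phi_\gamma$ contributes the factor $\rho^{\gamma/\beta-1}$ to reach $\rho^{2\gamma/\beta}H^2_b(Q,dA)$; your phrase ``becomes $\rho^2$ after $\Psi_\gamma$'' is not quite right, though your endpoint is.
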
 

\begin{proof} 
Let us start by comparing the $H^2 _b$ spaces.  To do this, we first compute 
$$r = \rho^{\gamma/\beta} \implies \rho \partial_\rho = \frac{\gamma}{\beta} r \partial_r; \, \partial_\theta = \frac{\gamma}{\beta} \partial_\phi \implies \cC^\infty \langle \rho \pa_\rho , \pa_\theta \rangle = \cC^\infty \langle r \pa_r, \pa_\phi \rangle.$$
Now, let $f\in r^2 H^2 _b (S_\gamma)$, so by definition $f(r,\phi)=r^2 u(r,\phi)$ with $u\in H^2 _b (S_\gamma)$. Then
$$(\Psi_\gamma ^* f)(\rho,\theta) = f(\rho^{\gamma/\beta},\gamma \theta/\beta) = \rho^{2\gamma/\beta}(\Psi_\gamma ^*u)(\rho,\theta).$$
Consequently,  
$$
\Psi_\gamma ^* (H^2 _b (S_\gamma) ) = H^2 _b (Q, dA_{h_\gamma}) =  \rho^{-\gamma/\beta +1} H^2 _b(Q ,dA),$$
$$\Psi_{\gamma}^* (r^2 H^2_b (S_{\gamma}) ) = \rho^{2 \gamma/\beta} H^2 _b (Q, dA_{h_\gamma}) = \rho^{\gamma/\beta +1} H^2 _b(Q ,dA),$$ 
and 
\begin{eqnarray*}
\Phi_\gamma (\Psi_\gamma ^* (r^2 H^2 _b (S_\gamma) ) &=& \Phi_\gamma(\rho^{\gamma/\beta + 1} H^2 _b (Q, dA))\\
&=&  \rho^{2\gamma/\beta} H^2 _b (Q, dA) \subseteq \rho^2 H^2 _b (Q, dA),
\end{eqnarray*}
for $\gamma\in [\beta,\pi)$.  Moreover, we have 
$$\Dom(\Delta_{h_\gamma}) = \rho^{2\gamma/\beta} H^2 _b (Q, dA) \cap H_0 ^1 (Q, dA).$$
It is straightforward to see that 
$$ \gamma' < \gamma \implies \rho^{2\gamma/\beta} H^2 _b (Q, dA) \subset \rho^{2\gamma'/\beta} H^2 _b (Q, dA).$$

Now, we claim that
$$\Phi_\gamma \left(H^1 _0 (Q, dA_{h_\gamma})\cap \rho^{2 \gamma/\beta} H^2 _b (Q, dA_{h_\gamma})\right) \subseteq H^1 _0 (Q, dA).$$
Note that $\cC^\infty _0 (Q)$ is independent of $h_\gamma$.  
Then, it is enough to show that for any $f\in \Dom(\Delta_{h_\gamma})$ the $L^2(Q, dA)$-norms of $\Phi_{\gamma} f$ and $\nabla (\Phi_{\gamma} f)$,  can be estimated using the fact that $f \in H^1 _0 (Q, dA_{h_\gamma})\cap \rho^{\gamma/\beta +1} H^2 _b (Q, dA)$.  
By definition, $\Phi_\gamma$ is an isometry of $L^2 (Q, dA_{h_\gamma} )$ and $L^2 (Q, dA)$. So we only need to prove that 
$\nabla (\Phi_{\gamma} f)\in L^2(Q, dA)$.
To do this, we compute
$$\int_Q |\nabla_{h_\gamma} f|^2 dA_{h_\gamma} = \int_Q e^{-2\sigma_{\gamma}}
\left( (\pa_\rho f)^2  + \rho^{-2} (\pa_\theta f)^2 \right) e^{2\sigma_\gamma}dA
= \int_Q |\nabla f|^2 dA.$$
Next we compute
\begin{multline*}
\int_{Q} |\nabla \Phi_{\gamma} f|^2 dA = \int_{Q} \left( (\pa_\rho e^{\sigma_\gamma} f)^2  + \rho^{-2} (\pa_\theta e^{\sigma_\gamma} f)^2  \right) dA\\
= \int_{Q} \left\{e^{2\sigma_\gamma} \left((\pa_\rho  f)^2  + \rho^{-2} (\pa_\theta f)^2 \right) + (\pa_{\rho }e^{\sigma_\gamma})^2f^2 +
2(\pa_{\rho} e^{\sigma_\gamma})e^{\sigma_\gamma} f (\pa_{\rho}f)\right\} dA.
\end{multline*}

The first term,
$$\int_{Q} e^{2\sigma_\gamma} \left((\pa_\rho  f)^2  + \rho^{-2} (\pa_\theta f)^2 \right) dA
= \int_{Q} |\nabla f|^2 \rho^{2\frac{\gamma}{\beta}-2}\frac{\gamma^2}{\beta^2} dA \leq 
\frac{\gamma^2}{\beta^2} \int_{Q} |\nabla_{h_{\gamma}} f|^2 dA_{h_\gamma}$$
since $\frac{\gamma}{\beta} \geq 1$, $\rho^{2\frac{\gamma}{\beta}-2}\leq 1$ on $Q$.

To estimate the second term, we use that $f\in \rho^{\gamma/\beta + 1} H^2_b(Q, dA)$, therefore %
$$\int_{Q} (\pa_{\rho }e^{\sigma_\gamma})^2f^2 dA = c \int_{Q} f^2 \rho^{2\frac{\gamma}{\beta} -4} dA
\leq \int_{Q} f^2 \rho^{-\frac{\gamma}{\beta} -1} dA < \infty$$
where $c=\frac{\gamma^2}{\beta^2} \frac{(\gamma-\beta)^2}{\beta^2}$ and we have used again that $\gamma \geq \beta$.
For the third term we compute
\begin{multline*}
\int_Q (\pa_{\rho} e^{\sigma_\gamma})e^{\sigma_\gamma} f (\pa_{\rho}f) dA 
= c \int_Q  \rho^{2\frac{\gamma}{\beta} -3} f (\pa_{\rho}f) dA \\
\leq c \left(\int_Q f^2 \rho^{2\frac{\gamma}{\beta} -4} dA \right)^{1/2} 
\left(\int_Q (\rho \pa_{\rho}f)^2 \rho^{2\frac{\gamma}{\beta}-4} dA \right)^{1/2}. 
\end{multline*}
Since $f\in \rho^{\gamma/\beta + 1} H^2_b(Q, dA)$, write $f = \rho^{\frac{\gamma}{\beta}+1}u$ with $u\in H^2_b(Q, dA)$.  Then
$$\int_Q f^2 \rho^{2\frac{\gamma}{\beta} -4} dA = \int_Q u^2 \rho^{\frac{2\gamma}{\beta}+2}\rho^{2\frac{\gamma}{\beta} -4} dA < \infty$$
since $\gamma \geq \beta$, and $u \in H^2 _b (Q, dA) \subset L^2 (Q, dA)$. 

Now, for the integral $\int_Q (\rho \pa_{\rho}f)^2 \rho^{2\frac{\gamma}{\beta}-4} dA$ we compute

$$(\rho \pa_{\rho}f)^2 
= \big(\frac{\gamma}{\beta}+1\big)^2\rho^{\frac{2\gamma}{\beta}+2} u^2 + 
2(\frac{\gamma}{\beta}+1)\rho^{2\frac{\gamma}{\beta}+2} u (\rho \pa_{\rho}) u
+\rho^{\frac{2\gamma}{\beta}+2} ((\rho \pa_{\rho}) u)^2.$$
Since $u\in H^2_b(Q, dA)$ and $\gamma \geq \beta$
$$ \int_Q u^2 \rho^{4\frac{\gamma}{\beta}-2} dA<\infty, \ \ 
\text{ and } \ \
\int_Q ((\rho \pa_{\rho}) u)^2\rho^{4\frac{\gamma}{\beta}-2} dA <\infty.$$ 
By the Cauchy-Schwarz inequality, 
\begin{multline*}\int_Q u \big((\rho \pa_{\rho}) u\big) \rho^{4\frac{\gamma}{\beta}-2} dA \leq  \left(\int_Q u^2 \rho^{4\frac{\gamma}{\beta} -2} dA \right)^{1/2} 
\left(\int_Q (\rho \pa_{\rho}u)^2 \rho^{4\frac{\gamma}{\beta}-2} dA \right)^{1/2} < \infty.
\end{multline*}

Putting everything together, we have proven that 
$$\Phi_\gamma (\Psi_\gamma ^* (\Dom (\Delta_{\gamma}) ) ) \subseteq \rho^{2\gamma/\beta} H^2 _b (Q, dA) \cap H^1 _0 (Q, dA).$$

In order to see that for $\beta\leq \gamma'<\gamma<\pi$,  
$$\Phi_\gamma( \Psi_\gamma ^* (\Dom(\Delta_{\gamma}))) \subset \Phi_{\gamma'} ( \Psi_{\gamma'} ^* (\Dom(\Delta_{\gamma'})),$$
we first note that 
$$\Phi_\gamma( \Psi_\gamma ^* (\Dom(\Delta_{\gamma}))) \subset \rho^{2\gamma/\beta} H^2 _b (Q, dA) \subset \rho^{2\gamma'/\beta} H^2 _b (Q, dA).$$
Finally, in order to show that 
$$f \in H^1 _0 (Q, dA_{h_\gamma})\cap \rho^{\gamma/\beta +1} H^2 _b (Q, dA) \implies \Phi_{\gamma'} ^{-1} \Phi_{\gamma} f \in H^1_0 (Q, dA_{h_{\gamma'}}), \quad \gamma' < \gamma,$$ 
simply note that the $L^2$ norm of $\nabla_{h_{\gamma'}}  (\Phi_{\gamma'} ^{-1} \Phi_{\gamma} f )$ can be estimated in the same way as above using the fact that $\gamma' < \gamma$, and therefore $\gamma-\gamma' > 0$.   
\end{proof} 


\subsubsection{The family of operators} 
Finally, let us introduce the family of operators that we will use to prove Polyakov's formula. Let us define $H_\gamma$ as
\begin{equation} H_\gamma := \Phi_\gamma \circ \Psi_\gamma  \circ \Delta_\gamma \circ \Psi_\gamma^{-1} \circ \Phi_\gamma^{-1}
= \Phi_\gamma  \circ \Delta_{h_\gamma} \circ \Phi_\gamma ^{-1}. \label{eq:transopH}\end{equation}

The domains of the family $\{H_\gamma\}_\gamma$ nest
$$\beta \leq \gamma' \leq \gamma \implies \Dom(H_\gamma) \subset \Dom(H_{\gamma'}) \subset \Dom(\Delta)$$
where $\Delta$ is the Laplacian on $Q$.


\section{Short time asymptotic expansion} \label{ss:stae} 
In order to prove the trace class property of the operator ${\mathcal M}_{\left( 1 + \log(r) \right)}e^{-t\Delta_{\alpha}}$ on $S_{\alpha}$ and the trace class property of the operators appearing in the proof of Proposition \ref{prop:dtHo} in \S \ref{s:formula} below, we need 
estimates on the heat kernel. We do not need a sharp estimate; a general estimate in terms of the time variable is enough for our purposes.  
 
\subsection{Heat kernel estimates} \label{hkest} 
The heat kernel estimates we require follow rather quickly from \cite{davies} and \cite{acm}.

\begin{prop} \label{pr-heat2}
Let $S$ denote a finite Euclidean sector. Then the heat kernel of the Dirichlet extension of Laplacian on $S$ satisfies the following estimates
\begin{eqnarray*}
\left| H (t,z,z') \right| &\leq& \frac{C}{t},\\
\left| \pa_t H (t,z,z') \right| &\leq& \frac{C}{t^2},
\end{eqnarray*}
for all $z,z'\in S$, and $t \in (0, T)$, where $C>0$ is a fixed constant which depends only on the constant $T>0$.  
\end{prop}

\begin{proof}
Sectors are both rather mild examples of stratified spaces. Consequently, the heat kernel satisfies the estimate (2.1) on p. 1062 of \cite{acm}.  This estimate is 
\begin{equation} \label{hke1} H(t, z, z') \leq C t^{-1}, \quad \forall z, z'\in S, \quad \forall t \in (0, 1), \end{equation} 
since the dimension $n=2$.   

Next, we apply the results by E.B. Davies in \cite{davies} which hold for the Laplacian on a general Riemannian manifold whose balls are compact if the radius is sufficiently small. These minimal hypotheses are satisfied for sectors.  
By \cite[Lemma 1]{davies}, 
$$|H(t, z, z')|^2 \leq H(t, z, z) H(t, z', z'),$$
for all $z,z'\in S$, and all $t>0$.  If $T <1$, then this estimate together with (\ref{hke1}) gives the first estimate in the Proposition.  In general, by \cite{davies} the function $t \mapsto H(t, z, z)$ is positive, monotone decreasing in $t$, and log convex for every $z$.  For a fixed $T \geq 1$, the estimate (\ref{hke1}) together with the above shows that 
$$|H(t, z, z')|^2 \leq C^2 \quad \forall t \geq 1.$$
So, we simply replace the constant $C$ with the constant $CT$, which we again denote by $C$ and obtain the estimate 
$$|H(t, z, z')|^2 \leq C^2 t^{-2}, \quad \forall t \in (0, T), \quad \forall z \textrm{ and } z' \in S.$$

Next, we apply Theorem 3 of \cite{davies}, which states that the time derivatives of the heat kernel satisfy the estimates
$$\left| \frac{\pa^n}{\pa t^n} H(t,z,z') \right| \leq \frac{n!}{(t-s)^n} H(s,z,z)^{1/2} H(s,z',z')^{1/2}, \quad n \in \N, \quad 0<s<t.$$
Making the special choice $s = t/2$ and $n=1$, we have
$$\left| \pa_t H(t,z,z') \right| \leq \frac{2}{t} H(t/2,z,z)^{1/2} H(t/2,z',z')^{1/2}.$$
Using the estimates for the heat kernel we estimate the right side above which shows that
$$\left| \pa_t H(t,z,z') \right| \leq C t^{-2}, \quad \forall t \in (0,T), \quad \forall z, z' \in  S.$$
\end{proof} 

\begin{remark}  By the heat equation, the estimate for the time derivative of the heat kernel implies the following estimate for the Laplacian of the heat kernel
$$\left| \Delta H(t,z,z') \right| \leq  C t^{-2},$$
for any $0<t<T$, and $z, z' \in  S$, for a constant $C>0$ depending on $T$.
\end{remark}

We now return to the trace class property of the operators in question.

\begin{lemma} \label{l-hsch} 
Let $S$ denote the finite sector with angle $\alpha$ and radius $R$, $S=S_{\alpha,R}$, with $\alpha \in (0, \pi)$. Let $\Delta$ denote the Dirichlet Laplacian on $S$ and $e^{-t\Delta}$ be the corresponding heat operator.  Let $\cM_\psi$ denote the operator multiplication by a function $\psi$.  Let $\xi$ be a smooth function on $S\setminus \{\rho=0\}$ such that $\xi(\rho) = m \log(\rho)$ for a constant $m\in \R$ on some neighborhood of the singular point $\rho=0$. Then, for any $t>0$ the following operators
\begin{enumerate}
\item $\cM_{\xi} e^{-t \Delta}$
\item $\cM_{\xi} \Delta e^{-t \Delta}$
\item $\Delta \cM_{\xi} e^{-t \Delta}$
\item $\cM_\psi e^{-t\Delta}$, where $\psi(\rho,\theta)=O(\rho^{-c})$ as $\rho \to 0$, for $c<1$.
\end{enumerate}
are Hilbert-Schmidt. Moreover, the operators $\cM_{\xi} e^{-t \Delta}$, $\cM_{\xi}\Delta e^{-t \Delta}$, $\Delta \cM_{\xi} e^{-t \Delta}$, $\cM_{\psi} e^{-t \Delta}$, and $\cM_{\psi}\Delta e^{-t \Delta}$ are trace class. 
\label{lemma:HSp}
\end{lemma}

\begin{proof}
Recall that an integral operator is Hilbert-Schmidt if the $L^2$-norm of its integral
kernel is finite. Using the estimates given in Proposition \ref{pr-heat2} we have that
\begin{eqnarray*}\|\cM_{\psi} e^{-t\Delta}\|_2 &\leq& C \int_{S\times S} |\psi(z)|^2 |H (t,z, z')|^2
dA dA' \\
&\leq& \wt{C}(\alpha,R,t) \int_0 ^R \int_0 ^R \rho^{-2c+1} \rho' d\rho d\rho' <\infty
\end{eqnarray*}
since $c<1$. Hence $\cM_\psi e^{-t\Delta}$ is a Hilbert-Schmidt operator. Similarly, 
\begin{eqnarray*}\|\cM_{\xi} e^{-t\Delta}\|_{2} &\leq& C \int_{S\times S} |\log(\rho)|^{2} |H (t,z, z')|^2
dA dA' \\
&\leq& \wt{C}(\alpha,R,t) \int_0 ^R \int_0 ^R |\log(\rho)|^2 \rho \rho' d\rho d\rho' <\infty,
\end{eqnarray*}
since $|\log(\rho)|^2 \rho$ is bounded on $(0, R)$. Thus $\cM_{\xi}  e^{-t \Delta}$ is also Hilbert-Schmidt.
Using the estimates for the kernel of $\Delta e^{-t \Delta}$, we can prove in the same way as above that $\cM_{\xi}\Delta e^{-t \Delta}$ and 
$\cM_{\psi}\Delta e^{-t \Delta}$ are Hilbert-Schmidt.

We shall prove now that $\Delta \cM_{\xi} e^{-t \Delta/2}$ is Hilbert-Schmidt.The integral kernel of $\Delta \cM_{\xi} e^{-t \Delta/2}$ is $\Delta_{z} \big(\xi(z) H(t,z, z')\big)$. By Leibniz's rule, 
$$\Delta_{z} \big(\xi(z) H(t,z, z')\big) = \big(\Delta_{z} \xi(z)\big) H(t,z, z')  + \xi(z)\big(\Delta_{z}  H(t,z, z')\big) + 2 \langle \nabla_z \xi , \nabla_z H\rangle.$$
When considering the integral
$$\int_{S\times S} |\Delta_{z} \big(\xi(z) H(t,z, z')\big)|^2 \ dA(z)dA(z'), $$
using again the estimates on the heat kernel and that the function $\xi$ is smooth away from the singularity, it is clear that the corresponding  terms are all bounded. Near the singularity, for $0< \rho \leq \rho_0$,  $\xi(z) = \log(\rho)$, and $\Delta \log(\rho) = 0$.  Hence, near the singularity, we have
$$\Delta_{z} \big(\xi(z) H(t,z, z')\big) = \xi(z)\big(\Delta_{z}  H(t,z, z')\big) + 2 \rho^{-1} \pa_{\rho} H(t,\rho, \rho', \theta, \theta').$$
The first term corresponds to the operator $\cM_{\xi} \Delta e^{-t \Delta}$ that is Hilbert-Schmidt.  Considering the second term, we note that, for any $t>0$, the heat kernel is in the domain of the Laplace operator.  By Proposition \ref{prop:domdesc2} (c.f. Example 1), this requires that the heat kernel 
$H\in H^2_b(S_{\alpha},\rho d\rho d\theta)$ which implies that 
$\rho^{-1} \pa_{\rho}H(t,\rho,\rho',\theta,\theta')\in L^2(S_{\alpha},\rho d\rho d\theta)$. Thus
\begin{equation*}
\int_{S_{\alpha,\rho_0}\times S} |\rho^{-1} \pa_{\rho}H(t,\rho,\rho',\theta,\theta')|^2 \rho  d\rho d\theta \rho' d\rho' d\theta' 
\leq C(t,\alpha),
\end{equation*} 
where $S_{\alpha,\rho_0}$ denotes the sector with angle $\alpha$ and radius $\rho_0$ and $C(t,\alpha)$ is a constant that depends on $\alpha$ and $t$. Hence, the operator whose integral kernel is $2 \langle \nabla_z \xi , \nabla_z H\rangle$ is Hilbert-Schmidt. 
Since the sum of two Hilbert-Schmidt operator is Hilbert-Schmidt, it follows that $\Delta \cM_{\xi} e^{-t \Delta/2}$ is Hilbert-Schmidt. \\

A way to prove that an operator is trace class is to write it as a product of two Hilbert-Schmidt operators. Since $e^{-t \Delta}$ 
is trace class, in particular it is Hilbert Schmidt. Therefore using the semigroup property of the heat operator we write
$$\cM_{\xi} e^{-t \Delta} = \cM_{\xi} e^{-t \Delta/2} e^{-t \Delta/2}$$
which proves that $\cM_{\xi} e^{-t \Delta}$ is trace class. The trace class property of the other operators listed in this lemma follows in the same way. 
\end{proof}


\subsection{Heat kernel parametrix} \label{ss:hkp}
To prove the existence of the asymptotic expansion of the trace given by equation (\ref{exp-exists}) and to compute it, we replace the heat kernel by a parametrix.  We construct a parametrix for the whole domain in the standard way: first we partition the domain and use the heat kernel of a suitable model for each part, then we combine these using cut-off functions. 
We use the following models for each corresponding part of the domain: 
\begin{enumerate}
\item The heat kernel for the infinite sector with opening angle $\alpha$ for a small neighborhood, $\cN_\alpha$, of the vertex of the sector with opening angle $\alpha$.  Denote this heat kernel by $H_\alpha$.  We note that by \cite[Lemma 6]{vs}, we may use the heat kernel for the infinite sector on this neighborhood.
\item The heat kernel for $\R^2$ for a neighborhood $\cN_i$ of the interior away from the straight edges. Denote this heat kernel by $H_{i}$.
\item The heat kernel for the half-plane, $\R^2_+$, for neighborhoods $\cN_e$ of the straight edges away from the corners. Denote this heat kernel by $H_{e}$.
\item The heat kernel for the unit disk for a small neighborhood, $\cN_a$, of the curved arc away from the corners. Denote this heat kernel by $H_\D$ or by $H_a$ (this is done in order to simplify some equations in the proof).  
\item The curved arc meets the straight segments in two corners. For these corners we consider two disjoint neighborhoods that are denoted by $\cN_{c}$, at these corners we use the heat kernel of the upper half unit disk, $H_{\D_+}$ or by $H_c$ (again, this is done in order to simplify some of the equations), at the corner $(1,0)$.  
\end{enumerate}

Let $*$ represent any of the regions introduced above. We define the gluing functions as cut-off functions $\{\chi_\alpha, \chi_i, \chi_{e}, \chi_a, \chi_{c} \}$ and $\{\wt{\chi}_\alpha, \wt{\chi}_i, \wt{\chi}_{e}, \wt{\chi}_a, \wt{\chi}_{c}\}$. These are smooth functions chosen such that $\{\chi_\alpha, \chi_i, \chi_e, \chi_a, \chi_{c}\}$ form a partition of unity of $S_\alpha$, $\chi_* =1$ on $\cN_*$, and 
$\wt{\chi}_* =1$ on $\supp(\chi_*)$.

Therefore, the parametrix we use is 
\begin{multline} H_p(t,z,z') = 
 \wt{\chi}_\alpha(z) H_\alpha \chi_\alpha (z') + \wt{\chi}_a(z) H_\D \chi_a (z')\\ + \wt{\chi}_{c} (z) H_{\D_+} \chi_{c} (z') + \wt{\chi}_e(z) H_{e} \chi_e (z') + \wt{\chi}_i(z) H_{i} \chi_i (z'). \label{parametrix} \end{multline} 
Above, for the sake of brevity, we have suppressed the argument $(t, z, z')$ of the four model heat kernels.  

The salient point, which is well-known to experts, is that this patchwork para\-me\-trix restricted to the diagonal is asymptotically equal to the true heat kernel on the diagonal with an error of $O(t^{\infty})$ as $t \downarrow 0$.  For these arguments, we refer the reader to Lemma 2.2 of \cite{mr} and \S 4 and Lemma 4.1 of \cite{Aldana-ae}.  
Moreover, it is known that for domains with both corners and curved boundary, the heat trace admits an asymptotic expansion as $t \downarrow 0$, and that this trace has an extra purely local contribution from the angles at the corners.  The proof for domains with \em both \em corners \em and \em curved boundary can be found in \cite[Theorem 2.1]{corners}; see also \cite{mr}. Even though we expect this calculation to be contained in earlier literature we were unfortunately unable to locate it.  Therefore, it is natural to expect that the angles also appear in the variational formula for the determinant.  We shall see that this is indeed the case.

\subsection{Proof of Theorem \ref{th-exp-exists}} \label{ss:pthexpexists}
For a sector, $S_\alpha$, from equation (2.13) of \cite{corners} (cf also \cite{mr}) it follows that the short time asymptotic expansion of the heat trace is given by
$$ \Tr(e^{-t\Delta_\alpha}) = 
\frac{\alpha}{8\pi t} - \frac{\alpha}{8 \sqrt{\pi t}} +  \frac{1}{12} \left( 2\chi(S_\alpha) - 3 \right) + \frac{\pi^2 + \alpha^2}{24 \pi \alpha} + 2 \frac{\pi^2 + \pi^2/4}{24 \pi (\pi/2)}+ O(\sqrt{t}),
$$ 
where $3$ is the number of corners, and the term $2 \frac{\pi^2 + \pi^2/4}{24 \pi (\pi/2)}$ comes from the two corners where the circular arcs meet the straight edges at which the angle is $\pi/2$. The $t^0$ coefficient (also called the constant coefficient) in the short time asymptotic of the heat trace is also $\zeta_{\Delta_{\alpha}}(0)$ : 

\begin{equation} \label{t0-heattrace} \zeta_{\Delta_{\alpha}}(0) = \frac{1}{12} \left( 2\chi(S_\alpha) - 3 \right) + \frac{\pi^2 + \alpha^2}{24 \pi \alpha} + 2 \frac{\pi^2 + \pi^2/4}{24 \pi (\pi/2)} = \frac{\pi^2 + \alpha^2}{24 \pi \alpha} + \frac{1}{8}. \end{equation}

Consequently, it suffices to demonstrate that 
$$\int_{S_\alpha} \log(r) H_{S_\alpha} (t,r,\phi, r, \phi) r dr d\phi$$
admits an expansion as in (\ref{exp-exists}), as $t \downarrow 0$.  

Let the error $E(t, r, \phi, r', \phi')$ be the difference between the true heat kernel and the patchwork construction, 
$$E(t,r,\phi, r', \phi') := H_{S_\alpha} (t,r, \phi, r', \phi') - H_p (t, r, \phi, r', \phi').$$
Then, we have 
$$\left| \int_{S_\alpha} \log(r) E(t, r, \phi, r, \phi) r dr d\phi \right| = O(t^{\infty}), \quad t \downarrow 0,$$
because the model heat kernels decay as $O(t^{\infty})$ as $t \downarrow 0$ in any compact set away from the diagonal.  

Consequently, it suffices to prove that 
$$\int_{S_\alpha} \log(r) H_p (t,r,\phi, r, \phi) r dr d\phi$$
admits a short time asymptotic expansion as in Theorem \ref{th-exp-exists}.  By definition of $H_p$, to demonstrate this, we may proceed locally, by considering the model heat kernels on their respective neighborhoods.  First, note that on $S_\alpha \setminus \cN_\alpha$, $\log(r)$ is a smooth function. 

Therefore, the existence of an asymptotic expansion of the integral 
\begin{equation}\int_{S_\alpha\setminus (\cN_\alpha \cup \cN_{c})} \log(r) H_p (t,r,\phi, r, \phi) r dr d\phi \label{eq:tlswoc}\end{equation}
for small values of $t$ follows from the locality principle of the heat kernel and the exis\-tence of the expansions of the heat kernel of the corresponding models. Although the idea is standard, we briefly explain it.  
\begin{multline*}
\frac{2}{\alpha} \int_{S_\alpha\setminus (\cN_\alpha \cup \cN_{c})}  \log(r)H_p (t,r,\phi, r, \phi)\\ 
=\frac{2}{\alpha} \int_{S_\alpha\setminus (\supp(\chi_\alpha) \cup \supp(\chi_c))}  \log(r) (\chi_i H_i+\chi_e H_e+\chi_a H_\D) \ dA \\
+\frac{2}{\alpha} \int_{\left(\supp(\chi_\alpha)\setminus\cN_\alpha\right) \cup \left(\supp(\chi_c)\setminus \cN_{c}\right)} 
\log(r) \sum_{*\in\{\alpha,i,e,a,c\}} \chi_* H_* \ dA,
\end{multline*}
where $dA$ denotes the area element $r dr d\phi$. Using the existence of the expansion of the heat kernel for small times in the interior and the smooth boundary away from the corners, we have that the asymptotic expansion of the integral exists. In addition, we can compute the constant coefficient of the expansion of the trace using the expansion of the heat kernels. This is: 
\begin{multline*} 
\frac{2}{\alpha} \int_{S_\alpha\setminus (\supp(\chi_\alpha) \cup \supp(\chi_c)}  \log(r) (\chi_i H_i+\chi_e H_e+\chi_a H_\D) \ dA\\
= \frac{2}{\alpha} \frac{1}{4\pi t} \int_{S_\alpha\setminus (\supp(\chi_\alpha) \cup \supp(\chi_c)}  \log(r) \left(\chi_i + \chi_e +\chi_a\right) dA\\\ 
+ \frac{2}{\alpha} \frac{1}{8 \sqrt{\pi t}} \int_{\partial(S_\alpha)\setminus \partial(\supp(\chi_\alpha) \cup \supp(\chi_c)) }  \log(r) \left(\chi_i + \chi_e +\chi_a\right) \ ds\\ 
+ \frac{2}{\alpha} \frac{1}{24\pi} \int_{S_\alpha\setminus (\supp(\chi_\alpha) \cup \supp(\chi_c)}  \log(r) \left(\chi_i + \chi_e +\chi_a\right) \text{Scal}_g \ dA\\ + 
\frac{2}{\alpha}\frac{1}{12 \pi} \int_{\partial(S_\alpha)\setminus \partial(\supp(\chi_\alpha) \cup \supp(\chi_c)) }  \log(r) \left(\chi_i + \chi_e +\chi_a\right)\kappa_g \ ds + O(t^{1/2}).
\end{multline*}

Observing that the scalar curvature is zero, the logarithm vanishes on the boundary of $S_\alpha$ where $r=1$, and the geodesic curvature of the straight edges is zero, we have that constant terms vanish:
\begin{multline*}\frac{2}{\alpha} \frac{1}{24\pi} \int_{S_\alpha\setminus (\supp(\chi_\alpha) \cup \supp(\chi_c)}  \log(r) \left(\chi_i + \chi_e +\chi_a\right) \text{Scal}_g \ dA\\ + 
\frac{2}{\alpha}\frac{1}{12 \pi} \int_{\partial(S_\alpha)\setminus \partial(\supp(\chi_\alpha) \cup \supp(\chi_c)) }  \log(r) \left(\chi_i + \chi_e +\chi_a\right)\kappa_g \ ds = 0.\end{multline*}

For the integral 
$$\frac{2}{\alpha} \int_{\left(\supp(\chi_\alpha)\setminus\cN_\alpha\right) \cup \left(\supp(\chi_c)\setminus \cN_{c}\right)} 
\log(r) \sum_{*\in\{\alpha,i,e,a,c\}} \chi_* H_* \ dA,$$
we note that in both cases the points in $\supp(\chi_\alpha)\setminus\cN_\alpha$ and $\supp(\chi_c)\setminus \cN_{c}$ are either interior points or points in the smooth boundary of $S_\alpha$. It follows then from the locality principle of the heat kernels, that this case is the same case as above. Therefore 
there exists an asymptotic expansion of the integral given in (\ref{eq:tlswoc}) for small values of time.  Moreover, this expansion does not contain $\log(t)$ terms, and its constant term vanishes.

The existence of the asymptotic expansion of the integral over $\cN_\alpha$ is proven in \S \ref{Scarslaw}. In that section we compute as well the contributions of this integral to the coefficients $a_{2,0}$, and $a_{2,1}$, defined in equation (\ref{exp-exists}). 

Unlike the neighborhood $\cN_\alpha$, there is no ``purely local'' contribution from the other two corners in the sector, apart from the contribution due to the short time expansion of the heat trace given in (\ref{t0-heattrace}).   
In order to prove this, we need to consider the heat kernel of the unit half disk; let $H_{\D_+}$ denote this heat kernel, with the Dirichlet boundary condition. Let $H_\D$ denote the heat kernel for the unit disk with Dirichlet boundary condition.  Using the method of images, the heat kernel for the half disk can be written in terms of the heat kernel for the unit disk as follows: 
\begin{equation}
H_{\D_+} (r, \theta, r', \theta', t) = H_\D (r, \theta, r', \theta', t) - H_\D (r, \theta, r', -\theta', t).
\label{eq:hkhd-it-hkd}
\end{equation}
We will use the fact that the unit disk is a manifold with boundary to prove that these corners do not contribute to our formula.  To accomplish this, we need to consider the associated heat space for the unit disk, in the sense of \cite[Chapter 7]{tapsit}.

The heat space for the disk can be constructed following \cite{MaVer} \S 3.1.  We shall see that the polyhomogeneity of the heat kernel on this space follows from Theorem 1.2 of \cite{MaVer}.  This may not be immediately apparent, because in \cite{MaVer}, the authors consider compact manifolds with edges. A compact manifold with boundary is a particular case of a compact manifold with edges in which the fiber of the cone is a point, $F=\{p\}$, and the lower dimensional stratrum is the boundary, $B= \pa M$.  For more details in this simplified case we also refer to \cite{grieser} and \cite{tapsit}.

\subsubsection{The heat space}  
The heat space associated to the unit disk in $\R^2$ is a manifold with corners obtained by performing two parabolic blow-ups of submanifolds of $\D \times \D \times \R^+$.  Let 
$$\cD_0 := \{ (p, p, 0) \in \D \times \D \times \R^+, \  p \in \D \}.$$
In order to construct the heat space we need to first perform parabolic blow up of 
$$\cD_b:=\cD_0 \cap (\pa \D \times \pa \D \times \R^+).$$ 
The notation for this blown-up space is 
$$[\D\times\D \times \R^+ ; \cD_b, dt].$$
The notation $dt$ indicates that the blowup is parabolic in the direction of the conormal bundle, $dt$.  In chapter 7 of \cite{tapsit} (see also \cite{etdeo1}), it is shown that there is a unique minimal differential structure with respect to which smooth functions on $\D^2 \times \R^+$ and parabolic polar coordinates around $\cD_b$ are smooth in the space $[\D \times \D \times \R^+ ; \cD_b, dt]$.  We recall that the parabolic polar coordinates around $\cD_b$ are $R = \sqrt{s^2 + (s')^2 + t}$ and $\Theta = (t/R^2, s/R, s'/R)$ on $\D^2 \times \R^+$, where $s$ and $s'$ are boundary defining functions for $\pa \D$ in each copy of $\D$.  As a set, this space is equivalently given by the disjoint union  
$$[\D^2 \times \R^+; \cD_b, dt] = \left( (\D^2 \times \R^+) \setminus \cD_b \right) \sqcup (PN^+(\cD_b)/\R^+),$$
where $PN^+(\cD_b)/\R^+$ the interior parabolic normal bundle of $\cD_b$ in $\D^2 \times \R^+$.  This can also be defined using equivalence classes of curves in analogue to the $b$-blowup in the $b$-heat space of \cite{tapsit} Chapter 7; specifically see p. 274--275 of \cite{tapsit}.  For a schematic diagram of the first blow-up, we refer to Figure 2 of \cite{MaVer}.  

Next, the diagonal away from the boundary is blown up at $t=0$.  We note that although the heat space is itself unchanged under the order of blowing up (see Proposition 3.13 of \cite{etdeo1}), the heat kernel is sensitive to which order the blow up is performed (see exercise 7.19 of \cite{tapsit}). In the notation of Melrose (see \S 4 and \S 7 of \cite{tapsit}), the heat space is then 
$$\D_h ^2 := [\D \times \D \times [0, \infty) ; \cD_b, dt; \cD_0, dt ].$$
Specifically, let $\cD_1$ denote the lift of $\cD_0$ to the intermediate space, $[\D^2 \times \R^+; \cD_b, dt]$.  The second step is to blow up $[\D^2 \times \R^+; \cD_b, dt] $ along $\cD_1$, parabolically in the $t$ direction.  As a set, this space is given by the disjoint union  
$$[\D^2 \times \R^+; \cD_b, dt; \cD_0, dt] = \left( [\D^2 \times \R^+; \cD_b, dt]  \setminus \cD_1 \right) \sqcup (PN^+(\cD_1)/\R^+),$$
where $PN^+(\cD_1)/\R^+$ the interior parabolic normal bundle of $\cD_1$ in $[\D^2 \times \R^+; \cD_b, dt]$.  This can also be defined using equivalence classes of curves in analogue to the $b$-blowup in the $b$-heat space of \cite{tapsit} Chapter 7, as explained above.

The heat space is a manifold with corners which has five codimension one boundary hypersurfaces, also known as boundary faces.  For a schematic diagram of this heat space space, we refer to Figure 3 of \cite{MaVer}.  The left and right boundary faces, $\cL$ and $\cR$ are given by the lifts to $\D_h ^2$ of $\pa\D \times \D \times [0, \infty)$ and $\D \times \pa \D \times [0, \infty)$, respectively.  The remaining three boundary faces are at the lift of $\{t=0\}$.  Denote by $\cB$ the face created by blowing up $\cD_b$, and by $\cD$ the face created by blowing up $\cD_0$.   Let $\beta : \D_h ^2 \to \D \times \D \times [0, \infty)$ denote the blow-down map.  Then the last boundary face, the temporal boundary\footnote{In the terminology of \cite{MaVer}, $\cB$ is known as the front face, ff, $\cD$ is known as the temporal diagonal, td, and $\cT$ is known as the temporal face, tf.}  denoted by $\cT$ is given by the closure of 
$$\beta^{-1} ( \D \times \D \times \{0\}) \setminus \left( \cB \cup \cD\right).$$
We denote the boundary defining functions correspondingly by $\rho_{\cL}$, $\rho_{\cR}$, $\rho_{\cB}$, $\rho_{\cD}$, and $\rho_{\cT}$.  Then we note that $t$ lifts to $\D_h^2$ as 
$$\beta^*(t) = \rho_{\cT} \rho_{\cB} ^2 \rho_{\cD} ^2.$$

\subsubsection{Polyhomogeneous conormal distributions on manifolds with corners}  
The heat space is a \em manifold with corners.  \em  An important class of distributions on manifolds with corners is the class of polyhomogeneous conormal distributions, which we abbreviate as \em pc distributions.  \em  We recall how these are defined in general.  Let $X$ be an $n$-dimensional manifold with corners.  By definition (see \S 2A of \cite{etdeo1}), $X$ is locally modelled diffeomorphically near each point by a neighborhood of the origin in the product $(\R^+)^k \times \R^{n-k}$.  Here by locally modelled we mean analogous to the definition of an $n$-dimensional Riemannian manifold being locally modelled by neighborhoods of $\R^n$.  Let $\{M_i\}_{i=1} ^J$ denote the codimension one boundary faces, which we simply refer to as boundary faces.  Let $\cV_b$ be the space of smooth vector fields on $X$ which are tangent to all boundary faces.  

For a point $q \in \pa X$ contained in a corner of maximal codimension $k$, choose coordinates $x^1, \ldots, x^k, y$ near $q$, where $x^i$ are defining functions for the boundary hypersurfaces $M_{i^1}, \ldots , M_{i^k}$ intersecting the corner at $q$, and $y$ is a set of coordinates along this codimension $k$ corner.  Then $\cV_b$ is in this context spanned over $\cC^\infty(X)$ near $q$ by $\{ x^1 \pa_{x^1}, \ldots , x^k \pa_{x^k}, \pa_{y^\alpha} \}$.  The conormal space is 
$$\cA^0 (X) = \{ u : V_1 \ldots V_l u \in L^\infty(X), \, \forall V_i \in \cV_b, \, \textrm{ and } \forall l \}.$$
To motivate the notion of polyhomogeneity, consider first the case in which there is only boundary face, $\pa X$, defined by $x$.  Then we say that $u$ is polyhomogeneous if $u$ admits an expansion 
$$u \sim \sum_{\Re s_j \to \infty} \sum_{p=0} ^{p_j} x^{s_j} (\log x)^p a_{j,p} (x, y), \quad a_{j,p} \in \cC^\infty(X).$$
Here the first index is over $\{s_j \}_{j \in \N} \subset \C$ whereas the second sum is over a finite set (for each $j$) of non-negative integers.  When $X$ has many possibly intersecting codimension one boundary components, then a polyhomogeneous conormal distribution is required to have such expansions at the interior of each boundary face with product type expansions at the corners.  To be more precise, beginning with the highest codimension corners, which have no boundary, one demands the existence of such an expansion, and then one proceeds inductively to the lower codimension corners and finally to the boundary faces.  

\begin{lemma} \label{le:hkdiskpc} 
The heat kernel, $H_\D$, lifted to $\D_h ^2$ is a polyhomogeneous conormal distribution. 
\end{lemma} 

\begin{proof} 
The polyhomogeneity and conormality of $\beta^* (H_\D)$ both follow Theorem 1.2 of \cite{MaVer}.  Specifically, as noted above, the unit disk is an example of an edge manifold, and in this case, the heat kernel with Dirichlet boundary condition is the Friedrichs heat kernel.  
\end{proof}     

Recall equation (\ref{eq:hkhd-it-hkd}) where the heat kernel for the upper half disk is given by the method of images.  We define the involution $f:\D\times \D \times [0, \infty) \to \D\times \D \times [0, \infty)$ by 
$$f(r, \theta, r', \theta', t) = (r, \theta, r', -\theta', t).$$
Then, the reflected term is simply $H_\D \circ f$.  Moreover, we note that $f^2$ is the identity map, and thus $f=f^{-1}$.  Let us denote 
$$\cD_0 ' = \{ (r, \theta, r, -\theta, 0) : (r, \theta) \in \D \} \subset \D \times \D \times [0, \infty),$$
and we observe that 
$$\cD_0 ' = f(\cD_0).$$
Then, it follows immediately from Lemma \ref{le:hkdiskpc} that $H_\D \circ f$ lifts to a polyhomogeneous conormal distribution on 
$$[ \D \times \D \times [0, \infty) ; \cD_0 ' \cap \pa \D \times \pa \D, dt; \cD_0 ' , dt].$$
We therefore immediately obtain 

\begin{cor} \label{cor:hkhdiskpc} 
Let 
\begin{multline*}\wt \D_h ^2 :=\\ 
[\D \times \D \times [0,\infty) ; \cD_0 \cap \cD_0' \cap \pa \D^2, dt; \cD_0 \cap \pa \D^2, dt; \cD_0' \times \pa \D^2, dt; \cD_0 \cap \cD_0', dt; \cD_0, dt; \cD_0', dt],
\end{multline*}
where $\pa \D^2$ denotes $\pa \D \times \pa \D$, and we have slightly abused the notation by not including the time variable when it is clear from the context. 
Then, the function 
$$H_\D - H_\D \circ f$$
lifts to $\wt \D_h ^2$ to a polyhomogeneous conormal distribution. Moreover, the product, 
$$\log(r) \left( H_\D - H_\D \circ f \right)$$
also lifts to $\wt \D_h ^2$ to a polyhomogeneous conormal distribution.  
\end{cor} 

\begin{proof} 
By the preceding lemma, $H_\D$ lifts to be polyhomogeneous conormal on $\D_h ^2$ and therefore also on $\wt \D_h ^2$.  In particular, performing additional blowups does not introduce any problems for $H_\D$.  By the observation that $f^2$ is the identity map, and $f(\cD_0) = \cD_0 '$, the same argument shows that $H_\D \circ f$ also lifts to be polyhomogeneous conormal.  The function $\log(r)$ is already polyhomogeneous conormal on $\D \times \D \times [0, \infty)$, and thus it remains polyhomogeneous conormal when lifted to $\wt \D_h ^2$. 
\end{proof}

\begin{lemma}\label{le:othercorners} Let $\cN_{c}$ denote the union of two neighborhoods of radius $\eps<1/3$ about the corners in $S_\alpha$ where the circular arcs meet the straight edges.  Then the trace, 
$$ \int_{\cN_{c}} \log(r) H_{\D_{+}} (t, r, \phi, r, \phi) r dr d\phi$$
has an asymptotic expansion as $t \downarrow 0$ which contains only integer and half-integer powers of $t$, and no $\log(t)$ terms.  Let $\alpha(\epsilon)$ denote the coefficient of $t^0$ in this expansion.  Then 
$$\lim_{\epsilon \to 0} \alpha(\epsilon) =  0.$$ 
\end{lemma} 

\begin{proof} 
By symmetry, it suffices to compute the trace near the point $(1,0)$.  
The heat kernel for the upper half disk can be written as
$$H_{\D_+} = H_\D - H_\D \circ f$$
By Corollary \ref{cor:hkhdiskpc}, the product 
$$\log(r) \left( H_\D - H_\D \circ f\right)$$
lifts to a polyhomogeneous conormal distribution on $\wt \D_h ^2$.  We compute the lift of 
$$r = 1-(1-r) = 1-s,$$
is given by 
$$\beta^*(r) = 1 - \beta^*(s) = 1 - \rho_\cL \rho_{\cB}.$$
Then, $\log(r) = \log(1-(1-r))$, and so we compute its lift
$$\beta^*( \log(r)) = \beta^*(\log(1-(1-r))) = \log(1-\rho_\cB \rho_{\cL}). $$
This is a smooth function near $\rho_{\cB}$ and $\rho_{\cL}$ and admits an asymptotic expansion there, 
$$\log(1-\rho_\cB \rho_{\cL})= \sum_{k \geq 1} - \frac{(\rho_{\cB} \rho_{\cL})^k}{k}, \textrm{ near $\cL$ and $\cB$.}$$
We know from \cite{MaVer} that the lifts of $H_\D$ and $H_\D \circ f$ to $\wt \D_h ^2$ contain integer and half-integer powers of the boundary defining functions, but they do not contain any log terms.  Hence, blowing down, or equivalently computing the trace near the lift of the point $(1,0)$, by the pushforward theorem there is an expansion as $t \downarrow 0$ which contains only integer and half-integer powers of $t$, and in particular, no $log(t)$ terms.  As a consequence, only the coefficient of $t^0$ may enter into our Polyakov formula, hence it is the only coefficient of interest to us.  We estimate this coefficient.  

Let $\cN_{\eps}$ be the intersection of $S_{\alpha}$ with a disc of radius $\eps$ centered at $(1,0)$. We then use the existence of the asymptotic expansion to write 
$$\int_{\cN_{\eps}} \log(r) H_{\D_+} (t, r, \phi, r, \phi) r dr d\phi  \sim \alpha (\epsilon) t^0 + R(\epsilon, t), \quad t \downarrow 0.$$
Note that 
$$|| \log(r)||_\infty = O(\eps) \textrm{ for all points} (r, \theta) \in \cN_\eps.$$
Hence, we estimate 
$$\left| \int_{\cN_\eps} \log(r) H_{\D_+} (t, r, \phi, r, \phi) r dr d\phi \right| \leq O(\epsilon) \int_{\cN_\eps} H_{\D_+} (t, r, \phi, r, \phi) r dr d\phi.$$ 
Now, on the right we have the asymptotic expansion of $H_{\D_+}$ near this corner,  
$$\int_{\cN_\eps} H_{\D_+} (t, r, \phi, r, \phi) r dr d\phi $$
$$\sim \frac{|\cN_\eps|}{4\pi t} - \frac{|\pa \cN_\eps \cap \pa \D_+|}{8 \sqrt{\pi t} } +\frac{|\pa \cN_\eps \cap \pa \D|}{12 \pi} + \frac{\pi^2 - (\pi/2)^2}{12 \pi^2} + O(\sqrt{t}), \quad t \downarrow 0.$$
Above, $|\cN_\eps|$, $|\pa \cN_\eps \cap \pa \D_+|$, $|\pa \cN_\eps \cap \pa \D|$ denote area and perimeters, respectively.  We note that the curvature along the boundary is one, and the angle at which the circular arc meets the straight edge is $\pi/2$.  These two observations lead to the computation above of the $t^0$ term.  Consequently, we have the estimate, 
$$\left| \int_{\cN_\eps} \log(r) H_{\D_+} (t, r, \phi, r, \phi) r dr d\phi \right| $$
$$\leq O(\epsilon) \left(  \frac{|\cN_\eps|}{4\pi t} - \frac{|\pa \cN_\eps \cap \pa \D_+|}{8 \sqrt{\pi t} } + \frac{|\pa \cN_\eps \cap \pa \D|}{12 \pi} + \frac{\pi^2 - (\pi/2)^2}{12 \pi^2} + O(\sqrt{t}) \right), \quad t \downarrow 0.$$

Letting $\epsilon \downarrow 0$, for any $t>0$, the right side vanishes.  Moreover, letting $\epsilon = t$, then as $t=\epsilon \downarrow 0$, the right side also vanishes.  This requires the coefficient, $\alpha(\epsilon)$, to vanish as $\epsilon \downarrow 0$, because the term $\alpha(\epsilon) t^0$ is independent of $t$.  

Finally, we note that a similar argument cannot be applied to the corner at the origin in the original sector, that is the corner of opening angle, $\alpha$, at which the conformal factor has a logarithmic singularity.  First and foremost, we cannot bring out the $L^\infty$ norm of the log there. 
\end{proof}



\section{Variational Polyakov formula}  \label{s:formula}    
Let $A$ be an integral operator on $L^2(Q,h_{\gamma})$ with kernel $K_{A}(z,z')$. The transformed operator $\Phi_\gamma  A \Phi_\gamma ^{-1}$ to the Hilbert space $L^2(Q, g)$ by the conformal transformation $\Phi_\gamma f = e^{\sigma_\gamma} f$ has integral kernel
$e^{\sigma_\gamma(z)}K_{A}(z,z')e^{\sigma_\gamma(z')}$. This follows from the transformation of the area element 
and 
\begin{eqnarray*}
(\Phi_\gamma A \Phi_\gamma ^{-1} f)(z) &=& \Phi_\gamma   \left(\int_Q K_A (z,z') e^{-\sigma_{\gamma}(z')} f(z') dA_{h_{\gamma}}(z') \right)\\
&=&  e^{\sigma_\gamma(z)}  \int_Q K_A (z,z') e^{-\sigma_\gamma(z')} f(z') e^{2\sigma_{\gamma}(z')} dA\\
&=& \int_Q e^{\sigma_\gamma(z)} K_A (z,z') e^{\sigma_{\gamma}(z')} f(z') dA (z')
\end{eqnarray*}
for $f\in L^2(Q,g)$.

Thus
\begin{eqnarray*}
\tr_{L^2(Q,g)} \left( \Phi_\gamma A \Phi_\gamma ^{-1} \right)
&=& \int_Q K_A (z,z) e^{2\sigma_{\gamma}(z)} dA (z)\\
&=& \int_Q K_A (z,z) dA_{h_{\gamma}}(z) =\tr_{L^2(Q,h_{\gamma})}\left( A\right).
\end{eqnarray*}

\subsection{Differentiation of the operators}
As we saw in equation (\ref{eq:transopH}), the domains of the family $\{H_\gamma\}_\gamma$ nest. 
In order to compute the derivative with respect to the angle at $\gamma= \alpha$, one would like to apply both $H_\gamma$ and $H_\alpha$ to the elements in the domain of $H_\alpha$.  There are subtleties which arise, but we can remedy them. 

\begin{lemma} \label{beta-choice} Let $0< \beta \leq \alpha<\pi$, and $\beta\leq \gamma<\pi$. Then the following one-sided derivatives 
$$\left.\frac{d H_\gamma}{d\gamma^-}\right|_{\gamma=\alpha} \text{ for } \beta < \alpha, \quad \text{ and } \quad 
\left.\frac{d H_\gamma}{d\gamma^+}\right|_{\gamma=\alpha} \text{ for } \beta \leq \alpha$$
are well defined. 
In both cases we have 
\begin{equation} \frac{\partial H_{\gamma}}{\partial \gamma ^\pm} = \dot{H_\gamma} =
\left(\frac{\pa \sigma_{\gamma}}{\pa \gamma}\right) H_{\gamma} + \Phi_\gamma
\left(\frac{\pa \Delta_{h_\gamma}}{\pa \gamma} \right) \Phi_\gamma ^{-1}  - \Phi_\gamma  \Delta_{h_\gamma}
\left(\frac{\pa \sigma_{\gamma}}{\pa \gamma}\right) \Phi_\gamma ^{-1}.  \label{dHa1} \end{equation}
\end{lemma}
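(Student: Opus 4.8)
The plan is to establish \eqref{dHa1} by differentiating the explicit factorization $H_\gamma = \Phi_\gamma \circ \Delta_{h_\gamma} \circ \Phi_\gamma^{-1}$ in \eqref{eq:transopH}, treating the one-sided limits $\gamma \downarrow \alpha$ (with $\beta = \alpha$) and $\gamma \uparrow \alpha$ (with $\beta = \alpha - \epsilon$) separately but by the same formal computation. First I would recall from Proposition \ref{prop-nest} and the nesting of the domains of $\{H_\gamma\}$ that for $\gamma$ near $\alpha$ with $\gamma \geq \alpha$ (respectively $\gamma \geq \alpha - \epsilon$), the operators $H_\gamma$ and $H_\alpha$ all act on the common dense domain $\Dom(H_\alpha) \subset \Dom(\Delta)$; this is precisely what makes the difference quotient $\gamma^{-1}(H_\gamma - H_\alpha)(\gamma - \alpha)^{-1}$, applied to a fixed element of $\Dom(H_\alpha)$, a well-defined family of $L^2(Q,g)$ vectors whose limit we must compute. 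The point of Lemma \ref{beta-choice}'s hypotheses $\beta \leq \alpha$, $\beta \leq \gamma < \pi$ is exactly to keep us inside the regime $\gamma/\beta \geq 1$ where all the incompleteness/domain-equivalence statements of \S\ref{sect:confsectors} hold.

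Next I would differentiate $H_\gamma = \Phi_\gamma \, \Delta_{h_\gamma} \, \Phi_\gamma^{-1}$ using the product rule for the three factors. The conformal factor $\sigma_\gamma$ depends smoothly (indeed real-analytically) on $\gamma$ for $\gamma > 0$ by the explicit formula \eqref{eq:conf-factor}, and $\Phi_\gamma = \cM_{e^{\sigma_\gamma}}$, so $\partial_\gamma \Phi_\gamma = \cM_{(\partial_\gamma \sigma_\gamma) e^{\sigma_\gamma}} = \cM_{\partial_\gamma \sigma_\gamma} \Phi_\gamma$ and likewise $\partial_\gamma \Phi_\gamma^{-1} = -\Phi_\gamma^{-1} \cM_{\partial_\gamma \sigma_\gamma}$. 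Applying the product rule gives three terms:
\begin{equation*}
\dot H_\gamma = \big(\partial_\gamma \sigma_\gamma\big)\,\Phi_\gamma \Delta_{h_\gamma}\Phi_\gamma^{-1} + \Phi_\gamma \big(\partial_\gamma \Delta_{h_\gamma}\big)\Phi_\gamma^{-1} - \Phi_\gamma \Delta_{h_\gamma}\big(\partial_\gamma \sigma_\gamma\big)\Phi_\gamma^{-1},
\end{equation*}
and recognizing $\Phi_\gamma \Delta_{h_\gamma}\Phi_\gamma^{-1} = H_\gamma$ in the first term yields exactly \eqref{dHa1}. The quantity $\partial_\gamma \Delta_{h_\gamma}$ makes sense because $\Delta_{h_\gamma} = e^{-2\sigma_\gamma}\Delta$ by \eqref{Dhg}, so $\partial_\gamma \Delta_{h_\gamma} = -2(\partial_\gamma\sigma_\gamma) e^{-2\sigma_\gamma}\Delta = -2(\partial_\gamma \sigma_\gamma)\Delta_{h_\gamma}$, which one could alternatively substitute to collapse the last two terms, but the stated form is the one convenient for the subsequent argument.

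The main obstacle is not the formal product rule but justifying that the difference quotients actually converge \emph{in the appropriate operator sense on the fixed domain $\Dom(H_\alpha)$}, i.e.\ that one may interchange $\partial_\gamma$ with the composition of unbounded operators. Here the nesting $\Dom(H_\gamma)\subset\Dom(H_\alpha)$ from the wrong side for $\gamma\uparrow\alpha$ is the delicate point, and this is precisely why two different choices of $\beta$ are forced: for $\gamma\downarrow\alpha$ one takes $\beta=\alpha$ so that $\alpha$ is the \emph{smallest} angle and $\Dom(H_\gamma)\subset\Dom(H_\alpha)$ for all $\gamma$ in a right-neighborhood of $\alpha$; for $\gamma\uparrow\alpha$ one instead takes $\beta=\alpha-\epsilon$ so that again every $\gamma\in[\alpha-\epsilon,\alpha]$ has $\Dom(H_\gamma)\subset\Dom(H_{\alpha-\epsilon})$, and one computes the left derivative at $\gamma=\alpha$ working in the ambient space $\rho^{2\alpha/\beta}H^2_b(Q,dA)\cap H^1_0(Q,dA)$. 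Thus the proof would: (i) fix $u\in\Dom(H_\alpha)$, write $H_\gamma u - H_\alpha u$ as a telescoping sum over the three factors $\Phi, \Delta_{h_\cdot},\Phi^{-1}$, (ii) estimate each resulting difference quotient in $L^2(Q,g)$ using the explicit powers of $\rho$ in $\sigma_\gamma$, $e^{\pm\sigma_\gamma}$, and $e^{-2\sigma_\gamma}$ together with the weighted-$b$-Sobolev membership of $u$ guaranteed by Proposition \ref{prop-nest} (exactly the kind of $\rho$-power bookkeeping carried out in the proof of that Proposition), and (iii) take $\gamma\to\alpha^{\pm}$ to obtain the stated one-sided derivative, the remainders vanishing because $\gamma/\beta\geq1$ uniformly. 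I would keep the $\rho$-power estimates terse by invoking the computations already displayed in the proof of Proposition \ref{prop-nest}, since they are the same in character.
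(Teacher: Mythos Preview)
Your formal product-rule computation giving \eqref{dHa1} is correct and matches the paper. The gap is in your handling of the domain issue, where you have the two one-sided cases backwards.

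By Proposition \ref{prop-nest} the nesting reads $\Dom(H_\gamma)\subset\Dom(H_{\gamma'})$ for $\gamma'<\gamma$: \emph{larger} angle means \emph{smaller} domain. Hence for the \emph{left} derivative ($\gamma<\alpha$, $\beta=\alpha-\epsilon$) one has $\Dom(H_\alpha)\subset\Dom(H_\gamma)$, so for any fixed $u\in\Dom(H_\alpha)$ both $H_\alpha u$ and $H_\gamma u$ are defined and the difference quotient lives in $L^2(Q,g)$; this is the \emph{easy} side, and the paper simply lets $\gamma\uparrow\alpha$ and then $\epsilon\to0$. Your plan (i)--(iii) is fine here, though the ``ambient space $\Dom(H_{\alpha-\epsilon})$'' plays no role.

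The genuine obstruction is the \emph{right} derivative ($\gamma>\alpha$, $\beta=\alpha$): now $\Dom(H_\gamma)\subsetneq\Dom(H_\alpha)$, so for a generic $u\in\Dom(H_\alpha)$ the expression $H_\gamma u$ is \emph{not defined} (indeed $e^{-\sigma_\gamma}\Delta(e^{-\sigma_\gamma}u)$ need not lie in $L^2$), and your step (i) ``write $H_\gamma u-H_\alpha u$ as a telescoping sum'' fails before it starts. The paper does not attempt to apply $H_\gamma$ to such $u$. Instead it \emph{defines} the right derivative by
\[
\left.\frac{dH_\gamma}{d\gamma^+}\right|_{\gamma=\alpha}f \;:=\; \lim_{n\to\infty}\lim_{\gamma\downarrow\alpha}\frac{H_\alpha f_n-H_\gamma f_n}{\alpha-\gamma},
\]
where $f_n\in C_c^\infty(Q)$ approximates $f$ in $\Dom(H_\alpha)$; on each $f_n$ both operators are unambiguously applicable. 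It then computes the inner limit explicitly at $\gamma=\alpha=\beta$, obtaining
\[
-\tfrac{2}{\alpha}(1+\log\rho)\,\Delta_\alpha f_n+\tfrac{2}{\alpha}\rho^{-1}\partial_\rho f_n,
\]
and checks that as $n\to\infty$ the terms $\Delta_\alpha f_n$ and $\rho^{-1}\partial_\rho f_n$ converge in $L^2$ while $(\log\rho)\Delta_\alpha f_n$ converges in $L^1$ (Cauchy--Schwarz), so the double limit exists and is independent of the approximating sequence. This density-and-limit argument is the missing ingredient in your proposal; the $\rho$-power bookkeeping of Proposition \ref{prop-nest} alone does not supply it.
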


\begin{proof}
The formal expression for $\dot{H_\gamma}$ follows from a straightforward computation. For the left derivative, we have that $\gamma, \ \beta < \alpha$.  Since $\Dom(H_\alpha) \subset \Dom(H_\gamma)$ for each $\gamma < \alpha$, we can apply both the operators $H_\alpha$ and $H_\gamma$ to all elements of the domain of $H_\alpha$ and let $\gamma \uparrow \alpha$.  The derivative 
$\left.\frac{d H_\gamma}{d\gamma^-}\right|_{\gamma=\alpha}$ is therefore computed in this way and given by (\ref{dHa1}).  We can then let $\beta \uparrow \alpha$.  

For the right derivative $\gamma > \alpha$, and we let $\beta := \alpha$.  In this case we cannot apply both operators $H_\gamma$ and $H_\alpha$ to all elements of $\Dom(H_\alpha)$ because there might be functions $f\in \Dom(H_\alpha)\setminus \Dom(H_{\gamma})$.  However, for such a function there is a sequence $\{f_n\}_n$ in $C^{\infty}_0(Q,g)$ with $f_n \to f$ in $\Dom(H_\alpha)$, since smooth and compactly supported functions are dense in the domain of the operator.  
Then, for $f\in \Dom(H_\alpha) \setminus \Dom(H_\gamma)$ we define 
\begin{equation} \label{rightderiv} \left.\frac{d H_\gamma}{d\gamma^+}\right|_{\gamma=\alpha} f :=  \lim_{n \to \infty } \left. \frac{d H_\gamma}{d\gamma^+}\right|_{\gamma=\alpha} f_n \end{equation} 
and we shall see that this limit is well defined.  For any $n \in \N$  
\begin{eqnarray*} 
\left.\frac{d H_\gamma}{d\gamma^+}\right|_{\gamma=\alpha} f_n &=& 
\frac{1}{\alpha} \left(1 + \log(\rho)\right) \Delta_{\alpha}f_n + -2 \frac{1}{\alpha} \left(1 + \log(\rho)\right) \Delta_{\alpha} f_n - \Delta_{\alpha}
\left(\frac{1}{\alpha} \left(1 + \log(\rho)\right)f_n \right)\\
&=& 
-\frac{1}{\alpha} \left(1 + \log(\rho)\right) \Delta_{\alpha} f_n
-\Big((\frac{1}{\alpha} \left(1 + \log(\rho)\right)\Delta_{\alpha}f_n \\
& & - 2g(\nabla_\alpha (\frac{1}{\alpha} \left(1 + \log(\rho)\right), \nabla_\alpha f_n) + \frac{1}{\alpha}{\mathcal M}_{\Delta_{\alpha} (1 + \log(\rho))} f_n \Big)\\
&=& -\frac{2}{\alpha} \left(1 + \log(\rho)\right) \Delta_{\alpha}f_n + \frac{2}{\alpha} \rho^{-1}\pa_{\rho}f_n . 
\end{eqnarray*}
The expression simplifies as above upon the observation that $\Delta_\alpha (1+\log(\rho)) = 0$.  Since $\alpha=\beta$, $\Dom(H_\alpha) = \Dom(\Delta) =H^{1}_{0}(Q,dA)\cap \rho^2 H^2 _b (Q, dA)$, and by assumption $f_ n \to f$ in $\Dom(H_\alpha)$.  Consequently, 
\begin{equation}
\Delta_\alpha f_n \to \Delta_\alpha f \textrm{ and }  \rho^{-1} \pa_\rho f_n \to \rho^{-1} \pa_\rho f, \textrm{ in } L^2 (Q,g).
\label{eq:cdLaux}
\end{equation}
By the Cauchy-Schwarz inequality, 
$$\int_Q \left| (\log \rho) (\Delta_\alpha f_n - \Delta_\alpha f) \right| dA \leq || \log(\rho) ||_{L^2 (Q, dA)} || \Delta_\alpha f_n - \Delta_\alpha f ||_{L^2(Q, dA)},$$
which tends to $0$ as $n \to \infty$ by the assumption that $f_n \to f$ in $\Dom(H_\alpha)$.  We therefore have the $L^1(Q, g)$ convergence 
$$(\log \rho) \Delta_\alpha f_n \to (\log \rho) \Delta_\alpha f.$$
This convergence together with the $L^2 (Q, g)$ convergence given in equation (\ref{eq:cdLaux}) above (which implies $L^1$ convergence because $Q$ is compact) shows that 
\begin{eqnarray}\lim_{n \to \infty } \left. \frac{d H_\gamma}{d\gamma^+}\right|_{\gamma=\alpha} f_n &=& \lim_{n \to \infty} -\frac{2}{\alpha} \left(1 + \log(\rho)\right) \Delta_{\alpha}f_n + \frac{2}{\alpha} \rho^{-1}\pa_{\rho}f_n \notag \\
&=& -\frac{2}{\alpha} \left(1 + \log(\rho)\right) \Delta_{\alpha}f + \frac{2}{\alpha} \rho^{-1}\pa_{\rho}f.  \label{rightderiv2}
\end{eqnarray} 
The above limit is in $L^1 (Q, g)$ and is well-defined for all $f \in \Dom(H_\alpha)$ because it is independent of the choice of approximating sequence $f_n \in C^\infty _0$.  This shows that we may indeed define the right derivative in (\ref{rightderiv}), and it is equal to (\ref{rightderiv2}).
\end{proof}

\begin{remark}
Although the definitions of $\sigma_\gamma$, $h_\gamma$, $Q$, and $H_\gamma$ depend on the choice of $\beta$, the final variational formula
is independent of this choice since, in the end, everything is pulled back to the original sector $S_\alpha$, and $\beta$ drops out of the equations. We only require this parameter to rigorously differentiate the trace; the sector $Q=S_{\beta}$ and the choice of $\beta$ are part of an auxiliary construction. 
\end{remark}
\begin{prop}\label{prop:dtHo} Let $H_{\gamma}$ be as in equation (\ref{eq:transopH}). Then the derivative of the
transformed heat operators is
\begin{eqnarray*}{\frac{d}{d \gamma}} \tr_{L^{2}(Q,g)}(\Phi_{\gamma} e^{-t\Delta_{h_\gamma}}
\Phi_{\gamma} ^{-1} ) &=& -t \ \tr_{L^{2}(Q,g)} (\dot{H}_{\gamma} e^{-t H_{\gamma}})\\
&=& -t \ \tr_{L^{2}(Q,h_{\gamma})} (\dot{\Delta}_{h_\gamma} e^{-t\Delta_{h_\gamma}}),
\end{eqnarray*}
where $\dot{\Delta}_{h_\gamma} \equiv \left.{\frac{\partial}{\partial \gamma}}\
\Delta_{h_\gamma}\right\vert_{\gamma}= -2(\partial_{\gamma}\sigma_{\gamma})\Delta_{h_\gamma}$.
\end{prop}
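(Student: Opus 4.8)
The plan is to differentiate the heat operator using Duhamel's principle, keeping careful track of the fact that the domains $\Dom(H_\gamma)$ nest rather than being fixed. First I would work on the side $Q$ with the fixed Hilbert space $L^2(Q,g)$, where the relevant operator is $H_\gamma = \Phi_\gamma \Delta_{h_\gamma}\Phi_\gamma^{-1}$. Since $e^{-tH_\gamma}$ is the semigroup generated by $H_\gamma$, the standard Duhamel formula gives, for $\gamma$ and $\gamma+\delta$ in the allowed range,
\begin{equation*}
e^{-tH_{\gamma+\delta}} - e^{-tH_\gamma} = -\int_0^t e^{-(t-s)H_\gamma}\,(H_{\gamma+\delta}-H_\gamma)\,e^{-sH_{\gamma+\delta}}\,ds,
\end{equation*}
valid because $\Dom(H_{\gamma+\delta})$ (or $\Dom(H_\gamma)$, whichever is smaller) is preserved by the other semigroup up to the one-sided subtleties handled in Lemma \ref{beta-choice}. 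Dividing by $\delta$ and letting $\delta\to 0$ using the well-definedness of $\dot H_\gamma$ from Lemma \ref{beta-choice}, I obtain
\begin{equation*}
\frac{d}{d\gamma} e^{-tH_\gamma} = -\int_0^t e^{-(t-s)H_\gamma}\,\dot H_\gamma\,e^{-sH_\gamma}\,ds.
\end{equation*}

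Next I would take the trace on $L^2(Q,g)$. Using cyclicity of the trace together with the semigroup property $e^{-(t-s)H_\gamma}e^{-sH_\gamma}=e^{-tH_\gamma}$, each integrand contributes $\tr(\dot H_\gamma e^{-tH_\gamma})$ independently of $s$, so the $s$-integral over $[0,t]$ produces the factor $t$:
\begin{equation*}
\frac{d}{d\gamma}\tr_{L^2(Q,g)}\!\big(e^{-tH_\gamma}\big) = -t\,\tr_{L^2(Q,g)}\!\big(\dot H_\gamma\, e^{-tH_\gamma}\big).
\end{equation*}
Since $\Phi_\gamma e^{-t\Delta_{h_\gamma}}\Phi_\gamma^{-1} = e^{-tH_\gamma}$ by \eqref{eq:transopH}, the left-hand side is exactly $\frac{d}{d\gamma}\tr_{L^2(Q,g)}(\Phi_\gamma e^{-t\Delta_{h_\gamma}}\Phi_\gamma^{-1})$, giving the first claimed equality. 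For the second equality I would use the trace identity established at the start of \S\ref{s:formula}: conjugation by the isometry $\Phi_\gamma$ preserves the trace, so $\tr_{L^2(Q,g)}(\dot H_\gamma e^{-tH_\gamma}) = \tr_{L^2(Q,h_\gamma)}(\Phi_\gamma^{-1}\dot H_\gamma\Phi_\gamma \cdot \Phi_\gamma^{-1} e^{-tH_\gamma}\Phi_\gamma) = \tr_{L^2(Q,h_\gamma)}(\dot\Delta_{h_\gamma} e^{-t\Delta_{h_\gamma}})$, where the last step uses $\Phi_\gamma^{-1}\dot H_\gamma \Phi_\gamma = \dot\Delta_{h_\gamma}$, which follows by differentiating $\Delta_{h_\gamma}=\Phi_\gamma^{-1}H_\gamma\Phi_\gamma$ and noting the $\Phi$-derivative terms cancel against those in \eqref{dHa1}. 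Finally $\dot\Delta_{h_\gamma}=-2(\partial_\gamma\sigma_\gamma)\Delta_{h_\gamma}$ is immediate from $\Delta_{h_\gamma}=e^{-2\sigma_\gamma}\Delta_\beta$ in \eqref{Dhg}.

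The main obstacle is justifying the Duhamel formula and the convergence $\delta^{-1}(H_{\gamma+\delta}-H_\gamma)\to \dot H_\gamma$ in a strong enough sense to pass to the limit inside the trace, given that the operators $H_\gamma$ act on a nested family of weighted $b$-Sobolev spaces rather than a single fixed domain, and that $\dot H_\gamma$ involves the singular factor $\log\rho$. I would handle the nesting exactly as in Lemma \ref{beta-choice}: for the left derivative use $\beta=\alpha-\epsilon$ so that $\Dom(H_\alpha)\subset\Dom(H_\gamma)$ for $\gamma<\alpha$ and both semigroups act on $\Dom(H_\alpha)$; for the right derivative use $\beta=\alpha$ and approximate elements of $\Dom(H_\alpha)\setminus\Dom(H_\gamma)$ by $C^\infty_0(Q)$ functions, on which $\dot H_\gamma$ acts by the explicit formula $-\tfrac{2}{\alpha}(1+\log\rho)\Delta_\alpha f + \tfrac{2}{\alpha}\rho^{-1}\partial_\rho f$ derived there. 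The trace-class and integrability bounds needed to interchange limit, integral, and trace are precisely the $H^2_b$ and weighted-$L^2$ estimates proved in Propositions \ref{prop-nest} and \ref{prop:domdesc2} together with the Cauchy–Schwarz argument for the $(\log\rho)\Delta_\alpha f$ term; these guarantee $\dot H_\gamma e^{-tH_\gamma}$ is trace class with trace depending continuously on $\gamma$, which is what makes the formal manipulation rigorous.
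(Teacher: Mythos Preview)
Your approach is essentially the same as the paper's: Duhamel's principle for the difference of semigroups, passage to the limit using Lemma~\ref{beta-choice}, and cyclicity of the trace to extract the factor $t$. However, your justification of the second equality contains an error. The operator identity $\Phi_\gamma^{-1}\dot H_\gamma\Phi_\gamma = \dot\Delta_{h_\gamma}$ is \emph{false}. Differentiating $\Delta_{h_\gamma}=\Phi_\gamma^{-1}H_\gamma\Phi_\gamma$ (or, equivalently, conjugating \eqref{dHa1} by $\Phi_\gamma^{-1}$) gives
\[
\Phi_\gamma^{-1}\dot H_\gamma\Phi_\gamma \;=\; \dot\Delta_{h_\gamma} \;+\; (\partial_\gamma\sigma_\gamma)\Delta_{h_\gamma} \;-\; \Delta_{h_\gamma}(\partial_\gamma\sigma_\gamma),
\]
and the commutator on the right does not vanish; the ``$\Phi$-derivative terms'' coming from the two computations are the same terms, not opposite ones, so nothing cancels at the operator level.

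What does hold, and what the paper proves, is the equality of the \emph{traces}: multiplying by $e^{-t\Delta_{h_\gamma}}$ and using cyclicity together with $[\Delta_{h_\gamma},e^{-t\Delta_{h_\gamma}}]=0$ gives
\[
\tr\big((\partial_\gamma\sigma_\gamma)\Delta_{h_\gamma}e^{-t\Delta_{h_\gamma}}\big)=\tr\big(\Delta_{h_\gamma}(\partial_\gamma\sigma_\gamma)e^{-t\Delta_{h_\gamma}}\big),
\]
so the commutator terms disappear and $\tr_{L^2(Q,g)}(\dot H_\gamma e^{-tH_\gamma})=\tr_{L^2(Q,h_\gamma)}(\dot\Delta_{h_\gamma}e^{-t\Delta_{h_\gamma}})$. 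This step requires each of the three pieces $(\partial_\gamma\sigma_\gamma)\Delta_{h_\gamma}e^{-t\Delta_{h_\gamma}}$, $\dot\Delta_{h_\gamma}e^{-t\Delta_{h_\gamma}}$, and $\Delta_{h_\gamma}(\partial_\gamma\sigma_\gamma)e^{-t\Delta_{h_\gamma}}$ to be individually trace class, which is the content of Lemma~\ref{lemma:HSp} (not Propositions~\ref{prop:domdesc2} and~\ref{prop-nest}, which concern domain nesting rather than trace-norm bounds). Once you replace the incorrect operator identity with this trace argument, your proof coincides with the paper's.
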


\begin{proof}
Although the proof of this proposition is standard in the boundaryless case, we include some details to show that the statement also holds in our case.
Following the same computation as in \cite[Lemma 5.1]{Aldana-ae} and \cite{Mueller},
\begin{equation*}
{\frac{d}{d \gamma}} \tr_{L^{2}(Q,g)}(\Phi_{\gamma} e^{-t\Delta_{h_\gamma}}
\Phi_{\gamma} ^{-1} ) = \tr_{L^{2}(Q,g)} \left( {\frac{d}{d \gamma}} e^{-t
H_{\gamma}}\right). \end{equation*}
Let $\gamma_{2}> \gamma_{1}$.  Duhamel's
principle is well known and often used in the settings of both manifolds with boundaries and conical singularities; see \cite{chee}.
We apply this principle in terms of the operators
$$e^{-t H_{\gamma_{1}}} - e^{-t H_{\gamma_2}} =  \int_{0}^{t} - e^{-s H_{\gamma_{1}}} H_{\gamma_{1}} e^{-(t-s)
H_{\gamma_{2}}} + e^{-s H_{\gamma_{1}}} H_{\gamma_{2}} e^{-(t-s) H_{\gamma_{2}}} \ ds.$$

Notice that the product $H_{\gamma_{1}} e^{-(t-s)H_{\gamma_{2}}}$ is well defined since $e^{-(t-s)H_{\gamma_2}}$ maps $L^2(Q,g)$ onto $\Dom(H_{\gamma_2})$
and $\Dom(H_{\gamma_2}) \subset \Dom(H_{\gamma_1})$. Then for $f\in L^2(Q,g)$, $e^{-(t-s)H_{\gamma_2}} f \in \Dom(H_{\gamma_1})$.

Dividing by $\gamma_{1}-\gamma_{2}$ the previous equation and letting $\gamma_{2}\to \gamma_{1}$, we obtain
$$\left.{\frac{d}{d \gamma}}
\ e^{-t H_{\gamma}}\right\vert_{\gamma=\gamma_{1}} = - \int_{0}^{t} e^{-s
H_{\gamma_{1}}} \left({\left.{\frac{d}{d \gamma}} H_{\gamma}
\right\vert_{\gamma=\gamma_{1}}}\right) e^{-(t-s) H_{\gamma_{1}}}\ ds.$$

Therefore since the heat operators are trace class
\begin{equation}
{\frac{d}{d \gamma}} \tr_{L^{2}(Q,g)}(\Phi_{\gamma} e^{-t\Delta_{h_\gamma}}\Phi_{\gamma} ^{-1} )
= -t \ \tr_{L^{2}(Q,g)} \left(\dot{H}_{\gamma} e^{-tH_{\gamma}}\right).
\end{equation}

We computed $\frac{\partial}{\partial \gamma} H_{\gamma}$ in equation (\ref{dHa1}). Substituting its value into our calculation above, we obtain
\begin{eqnarray*}& & \tr_{L^{2}(Q,g)} \left(\dot{H}_{\gamma} e^{-tH_{\gamma}}\right)\\
& & = \tr_{L^{2}(Q,g)} \left(
(
\left( \pa_{\gamma} \sigma_{\gamma}\right) H_{\gamma} + \Phi_{\gamma}
\left(\pa_{\gamma} \Delta_{h_\gamma}\right) \Phi_{\gamma} ^{-1} - \Phi_{\gamma} \Delta_{h_\gamma}
\left(\pa_{\gamma} \sigma_{\gamma}\right) \Phi_{\gamma} ^{-1}
) e^{-tH_{\gamma}}\right)\\
& & = \tr_{L^{2}(Q,g )} \left(\Phi_{\gamma}
\left( (\pa_{\gamma} \sigma_{\gamma}) \Delta_{h_\gamma}
e^{-t \Delta_{h_\gamma}} + (\pa_{\gamma} \Delta_{h_\gamma}) e^{-t \Delta_{h_\gamma}} -
\Delta_{h_\gamma} (\pa_{\gamma} \sigma_{\gamma}) e^{-t \Delta_{h_\gamma}} \right) \Phi_{\gamma} ^{-1}
\right)\\
& & = \tr_{L^{2}(Q,h_{\gamma})}\left( (\pa_{\gamma}\sigma_{\gamma}) \Delta_{h_\gamma} e^{-t\Delta_{h_\gamma}} +
\dot{\Delta}_{h_\gamma} e^{-t \Delta_{h_\gamma}} - \Delta_{h_\gamma}(\pa_{\gamma}\sigma_{\gamma})e^{-t\Delta_{h_\gamma}}\right)\\
& &= \tr_{L^{2}(Q,h_{\gamma})}\left(\dot{\Delta}_{h_\gamma} e^{-t \Delta_{h_\gamma}}\right),
\end{eqnarray*}
where we have used that the operators 
$\left(\pa_{\gamma} \sigma_{\gamma}\right) H_{\gamma} e^{-tH_{\gamma}}$,
$\Phi_{\gamma}  \left(\pa_{\gamma} \Delta_{h_\gamma}\right) \Phi_{\gamma} ^{-1} e^{-tH_{\gamma}}$, and
$\Phi_{\gamma} \Delta_{h_\gamma} \left(\pa_{\gamma} \sigma_{\gamma}\right) \Phi_{\gamma} ^{-1}  e^{-tH_{\gamma}}$
are trace class in ${L^{2}(Q,g)}$; see Lemma \ref{lemma:HSp}.  Since the operators are all trace class, the first and third terms cancel due to commutation of the operators when taking the trace. 
\end{proof}

\begin{proof}[Proof of Theorem \ref{var-det}]
In order to prove Theorem \ref{var-det}, we differentiate the spectral zeta function with respect to the angle $\gamma$ as in 
equation (\ref{eq:derzetang}). 

We start by noticing the equality of the following traces:
$$\Tr_{L^{2}(S_{\gamma},g)}(e^{-t \Delta_{\gamma}})
= \Tr_{L^{2}(Q,h_{\gamma})}(e^{-t \Delta_{h_{\gamma}}})
= \Tr_{L^{2}(Q,g)} (e^{-t H_{\gamma}}).$$
Then, from Proposition \ref{prop:dtHo} we have
\begin{multline*}
\left. \frac{\partial}{\partial \gamma} \ \Tr_{L^{2}(S_{\gamma},g)}(e^{-t \Delta_{\gamma}})
\right|_{\gamma = \alpha}
= -t \ \tr_{L^{2}(Q,h_\alpha)}\left(\dot{\Delta}_{h_\alpha} e^{-t \Delta_{h_\alpha}}\right)\\
= 2 t\ \Tr_{L^{2}(Q,h_\alpha)}\left( 
\left(\frac{1}{\alpha} + \frac{1}{\beta} \log \rho\right)
\Delta_{h_\alpha} e^{-t \Delta_{h_\alpha}}\right)
\end{multline*}
where we have replaced $(\delta \sigma_{\alpha})$ by its value
$\left(\frac{1}{\alpha} + \frac{1}{\beta} \log \rho\right)$, and we have used that the Laplacian changes conformally in dimension $2$.
On the other hand,
$$\frac{\pa}{\pa t} \Tr_{L^{2}(Q,h_\alpha)}
\left( (\delta \sigma_{\alpha}) e^{-t \Delta_{h_\alpha}}\right)
= - \Tr_{L^{2}(Q,h_\alpha)}\left( (\delta \sigma_{\alpha})
\Delta_{h_\alpha} e^{-t \Delta_{h_\alpha}}\right).$$
The convergence above follows from the invariance of the trace and the estimates contained in \S \ref{hkest}, in particular Lemma \ref{l-hsch}.

Thus 
$$\left. \frac{\partial}{\partial \gamma} \ \Tr_{L^{2}(S_{\gamma},g)}(e^{-t \Delta_{\gamma}})
\right|_{\gamma = \alpha}
= -2 t\ \frac{\pa}{\pa t} \Tr_{L^{2}(Q,h_\alpha)}\Big( (\delta \sigma_{\alpha}) e^{-t \Delta_{h_\alpha}}\Big).$$

Notice that using the change of variables in equation (\ref{eq:cts}) we obtain
$$\Tr_{L^{2}(Q,h_\alpha)}\left(\left(\frac{1}{\alpha} + \frac{1}{\beta} \log \rho\right)
e^{-t \Delta_{h_\alpha}}\right) 
= \Tr_{L^2 (S_{\alpha},g)} \left( \frac{1}{\alpha} \left( 1 + \log(r) \right) e^{- t \Delta_{\alpha}}  \right).$$

Now, going back to the computation of $\delta \zeta_{\Delta_{\alpha}}' (0)$ and replacing the corresponding terms we have
$$\left. \frac{\partial}{\partial \gamma} \zeta_{\Delta_{\gamma}} (s)\right|_{\gamma=\alpha}
= -\frac{2}{\Gamma(s)} \int_0 ^\infty t^{s}
\frac{\pa}{\pa t} \Tr_{L^{2}(S_\alpha,g)}\Big((\delta \sigma_{\alpha}) e^{-t \Delta_{h_\alpha}} \Big)
dt. $$
Recall that upon changing variables, $\delta \sigma_{\alpha}(r,\phi) = \frac{1}{\alpha} \left( 1 + \log(r) \right) $. The next step is to integrate by parts. In order to be able to integrate by parts, we require appropriate estimates of
the trace for large values of $t$ and an asymptotic expansion of it for small values of $t$.

The large values of $t$ are not problematic since
$$\tr_{L^{2}(S_\alpha,g)} \Big( (\delta \sigma_{\alpha}) e^{-t \Delta_{\alpha}} \Big) = O(e^{-c'_{\alpha}t}), \text{ as } t\to \infty,$$
for some constant $c'_{\alpha}>0$.  This statement follows from a standard argument; see for example \cite[Lemma 5.2]{Aldana-ae}. Let $t>1$ and write
\begin{align*}
(\delta\sigma_{\alpha}) e^{-t \Delta_{\alpha}} = (\delta\sigma_{\alpha})
e^{-{\frac{1}{2}} \Delta_{\alpha}} e^{-(t-{\frac{1}{2}}) \Delta_{\alpha}}.
\end{align*}
The operator $(\delta\sigma_{\alpha}) e^{-{\frac{1}{2}} \Delta_{\alpha}}$ is trace class. Since
the spectrum of the operator $\Delta_{\alpha}$ is contained in $ [c_{\alpha},\infty)$ for some $c_{\alpha}>0$, for $t>1$ we have
$$\Vert e^{-(t-{\frac{1}{2}}) \Delta_{\alpha}} \Vert_{L^{2}(S_\alpha,g)}\leq e^{-c_{\alpha}(t-{\frac{1}{2}})}$$
Thus for any $t>1$, the trace satisfies the desired estimate:
\begin{multline*}
\vert \tr((\delta\sigma_{\alpha}) e^{-t \Delta_{\alpha}}) \vert
\leq
\Vert (\delta\sigma_{\alpha}) e^{-{\frac{1}{2}} \Delta_{\alpha}}
e^{-(t-{\frac{1}{2}}) \Delta_{\alpha}}\Vert_{1}\\
\leq \Vert (\delta\sigma_{\alpha}) e^{-{\frac{1}{2}} \Delta_{\alpha}} \Vert_{1} \Vert
e^{-(t-{\frac{1}{2}}) \Delta_{\alpha}}\Vert_{L^{2}(S_\alpha,g)}
\ll e^{-c'_{\alpha}t},
\end{multline*}
where $\Vert \cdot \Vert_{1}$ denotes the trace norm of the operator and $\Vert \cdot \Vert_{L^{2}(S_\alpha,g)}$ denotes the operator norm 
in $L^{2}(S_\alpha,g)$. 

As for the small values of $t$, the existence of an asymptotic expansion is established in Theorem \ref{th-exp-exists}.    Consequently, integration by parts gives 
\begin{equation*} \left. \frac{\partial}{\partial_{\gamma}} \zeta_{\Delta_{\gamma}} (s)\right|_{\gamma=\alpha} = \frac{2 s}{\Gamma(s)}
\int_0 ^{\infty}  t^{s-1} \tr_{L^{2}(S_\alpha,g)} \Big( (\delta \sigma_{\alpha}) e^{-t\Delta_{\alpha}} \Big) dt,
\end{equation*}
Now, we insert the asymptotic expansion for the trace proven in Theorem \ref{th-exp-exists} to obtain
\begin{multline*} \left. \frac{\partial}{\partial_{\gamma}} \zeta_{\Delta_{\gamma}} (s)\right|_{\gamma=\alpha} = \frac{2}{\alpha} \frac{s}{\Gamma(s)}
\int_0 ^{1}  t^{s-1} \Big( a_0 t^{-1} + a_{1}t^{-\frac12} + a_{2, 0} \log(t) + a_{2, 1} + f(t) \Big) dt \\
+ \frac{s}{\Gamma(s)}\int_1 ^{\infty}  t^{s-1} \tr_{L^{2}(S_\alpha,g)} \Big(2 (\delta \sigma_{\alpha}) e^{-t\Delta_{h_\alpha}} \Big) dt, 
\end{multline*}
where $f(t)=O(t^{\frac12})$. 
 Thus 
$$\left. \frac{\partial}{\partial_{\gamma}} \zeta_{\Delta_{\gamma}} (s)\right|_{\gamma=\alpha} = 
\frac{2}{\alpha} \frac{s}{\Gamma(s)} 
 \Big( \frac{a_0}{s-1} + \frac{a_1}{s-\frac12} - \frac{a_{2, 0}}{s^2} + \frac{a_{2, 1}}{s} + e(s) \Big)$$
where $e(s)$ is analytic in $s$ for $\re(s)>-1/2$. The Taylor expansion at $s=0$ of the reciprocal Gamma function $\frac{1}{\Gamma(s)}$ has the form $\frac{1}{\Gamma(s)} = s + \gamma_e s^2 + O(s^3)$ which implies $\frac{s}{\Gamma(s)} = s^2 + \gamma_e s^3 + O(s^4)$. Thus, differentiating  with respect to $s$ and evaluating at $s=0$, we obtain:
$$\left. \frac{\partial}{\partial s} 
 \frac{\partial}{\partial_{\gamma}} \zeta_{\Delta_{\gamma}} (s)\right|_{\gamma=\alpha, s=0}
= \frac{2}{\alpha} \left(-\gamma_{e} a_{2,0} + a_{2,1}\right),$$
where $a_{2,0}$ and $a_{2,1}$ are defined by (\ref{exp-exists}). This finishes the proof of Theorem \ref{var-det}.
\end{proof}


\section{The quarter circle} \label{s:qcrd}
We have proven that the derivative of the logarithm of the determinant of the Laplacian in the angular direction on a finite 
Euclidean sector is given  in terms of the coefficients $a_{2,0}$ and $a_{2,1}$ in the small time expansion in (\ref{variation}) in Theorem \ref{var-det}. To complete the proof of Theorem \ref{th-exp-exists}, we shall simultaneously (1) complete the proof  that this small time expansion exists and (2) compute the contribution from the corner of opening angle $\alpha$.
To motivate and elucidate the arguments used in the rather arduous general case, we first consider the simplest case, when $\alpha = \pi/2$.  

\subsection{Proof of Theorem \ref{ctpio2}} 
Let $\alpha=\pi/2$, then the infinite sector with angle $\alpha$ is the quadrant $C=\{(x,y)\in \R^2 , x,y\geq 0\}$.
The Dirichlet heat kernel in this case can be obtained as the product of the Dirichlet heat kernel on the
half line $[0,\infty)$ with itself. For $x_1,x_2 \in [0,\infty)$ the Dirichlet heat kernel is given by
$$p_{hl}(t,x_1,x_2)=\frac{1}{\sqrt{4\pi t}} (e^{-\frac{(x_1 - x_2)^2}{4t}} - e^{-\frac{(x_1 + x_2)^2}{4t}}).$$

Let $u=(x_1,y_1)$, $v=(x_2,y_2)$ be in $C$, we have
\begin{multline*}
p_{C}(t,u,v)= p_{hl}(t,x_1,x_2) p_{hl}(t,y_1,y_2)\\
= \frac{1}{4\pi t} (e^{-\frac{|u-v|^2}{4t}} + e^{-\frac{|u + v|^2}{4t}} -
e^{-\frac{(x_1 - x_2)^2 + (y_1+y_2)^2}{4t}} - e^{-\frac{(x_1 + x_2)^2 + (y_1 - y_2)^2}{4t}}).
\end{multline*}
Writing this in polar coordinates with $u=re^{i\phi}$, $v=r'e^{i\phi'}$ we obtain
\begin{eqnarray*}
p_{C}(t,u,v) &=& \frac{e^{-\frac{r^2 + r'^2}{4t}}}{4\pi t} (e^{\frac{r r'}{2t}\cos(\phi'-\phi)} + e^{-\frac{r r'}{2t}\cos(\phi'-\phi)}\\
& & -e^{\frac{r r'}{2t}\cos(\phi'+\phi)} - e^{-\frac{r r'}{2t}\cos(\phi'+\phi)} )\\
&=& \frac{e^{-\frac{r^2 + r'^2}{4t}}}{2\pi t} \left( \cosh\left( \frac{rr'\cos(\phi'-\phi)}{2t} \right) -
\cosh\left( \frac{rr'\cos(\phi'+\phi)}{2t} \right) \right). \label{eq:hkCuad}
\end{eqnarray*}

Let $R>0$, and recall the factor $\frac{2}{\alpha} = \frac{4}{\pi}$ in this case.   Let  ${\chi}_{S_{\pi/2,R}}$ be the characteristic function of the finite sector $S_{\pi/2,R}$
\begin{multline*}
\frac{4}{\pi}\Tr\left( {\mathcal M}_{{\chi}_{S_{\pi/2,R}}\left( 1 + \log(r) \right)} e^{-t\Delta_{\pi/2}}\right)
= \int_{0}^{R}\int_{0}^{\pi/2} \frac{4}{\pi} ( 1 + \log(r)) p_{C}(t,r,\phi,r,\phi)\ r\ dr\ d\phi\\ 
=\int_{0}^{R}\int_{0}^{\pi/2} \frac{4}{\pi} ( 1 + \log(r))
\frac{e^{-\frac{r^2}{2t}}}{4\pi t} (e^{\frac{r^2}{2t}} + e^{-\frac{r^2}{2t}}
-e^{\frac{r^2}{2t}\cos(2\phi)} - e^{-\frac{r^2}{2t}\cos(2\phi)} )\ r d\phi dr \\
= \frac{1}{\pi^2 t} \int_{0}^{R}\int_{0}^{\pi/2} (1 + \log(r))
(1 + e^{-\frac{r^2}{t}} -e^{-\frac{r^2}{2t}}e^{\frac{r^2}{2t}\cos(2\phi)} - e^{-\frac{r^2}{2t}}e^{-\frac{r^2}{2t}\cos(2\phi)} )\ r d\phi dr.
\end{multline*}

We split this integral into two terms, \begin{eqnarray*}
T_1(t) &=& \frac{1}{\pi^2 t} \int_{0}^{R}\int_{0}^{\pi/2} (1 + \log(r))(1 + e^{-\frac{r^2}{t}})\ r d\phi dr\\
&=& \frac{1}{2\pi t}\int_{0}^{R} (1 + \log(r))(1 + e^{-\frac{r^2}{t}})\ r dr\\
&=&
\frac{1}{2\pi t} \left( \int_{0}^{R} r dr + \int_{0}^{R} \log(r)\ r dr +
\int_{0}^{R} e^{-\frac{r^2}{t}}\ r dr
+ \int_{0}^{R} \log(r) e^{-\frac{r^2}{t}}\ r dr \right),
\end{eqnarray*}

\begin{eqnarray*}
T_2(t) &=& -\frac{1}{\pi^2 t} \int_{0}^{R}\int_{0}^{\pi/2} (1 + \log(r))
(e^{-\frac{r^2}{2t}}e^{\frac{r^2}{2t}\cos(2\phi)} + e^{-\frac{r^2}{2t}}e^{-\frac{r^2}{2t}\cos(2\phi)} )\ r d\phi dr\\
&=& - \frac{1}{\pi^2 t} \int_{0}^{R} (1 + \log(r)) e^{-\frac{r^2}{2t}} \int_{0}^{\pi/2}
(e^{\frac{r^2}{2t}\cos(2\phi)} + e^{-\frac{r^2}{2t}\cos(2\phi)}) \ d\phi \ r dr.
\end{eqnarray*}

\begin{claim} 
The integral $(T_1 + T_2)(t)$ has an asymptotic expansion as $t\to 0$ of the form 
$$(T_1 + T_2)(t) = 
 \frac{1}{2\pi t}\left( R + \frac{R^{2}\log(R)}{2} - \frac{R^{2}}{4} \right)  
-\frac{ R \log R}{\pi \sqrt{\pi t}} + \frac{\log(t)}{8\pi}
- \frac{1}{4\pi} - \frac{\gamma_e}{8\pi} + O(t^{1/2})$$
\end{claim} 

\begin{proof} 

By inspection, the first two terms in $T_1(t)$ contribute only to the $t^{-1}$ coefficient, and that contribution is 
$$\frac{1}{2\pi t} \left( R + \frac{R^2 \log(R)}{2} - \frac{R^2}{4} \right).$$
So, we look at the expansion in $t$ of

\begin{equation}
\wt{T}_1(t) = \frac{1}{2\pi t} \left(\int_{0}^{R} e^{-\frac{r^2}{t}}\ r dr + \int_{0}^{R} \log(r) e^{-\frac{r^2}{t}}\ r dr \right).
\label{eq:FiT1}
\end{equation}
We compute
$$\frac{1}{2\pi t} \int_{0}^{R} e^{-\frac{r^2}{t}}\ r dr = \frac{1}{4\pi} \int_{0}^{R^2/t} e^{-u}\ du = \frac{1}{4\pi} - \frac{1}{4\pi}e^{-R^2/t},$$
and

\begin{multline*}
\frac{1}{2\pi t} \int_{0}^{R} \log(r) e^{-\frac{r^2}{t}}\ r dr = \frac{1}{8\pi}\int_{0}^{R^2/t} \log(tu) e^{-u} \ du\\ =
\frac{1}{8\pi} \int_{0}^{\infty} \log(u) e^{-u} \ du - \frac{1}{8\pi} \int_{R^2/t}^{\infty} \log(u) e^{-u} \ du + 
\frac{1}{8\pi}\int_{0}^{R^2/t} \log(t) e^{-u} \ du\\ =
\frac{-\gamma_e}{8\pi} - \frac{1}{8\pi} \int_{R^2/t}^{\infty} \log(u) e^{-u} \ du + \frac{\log(t)}{8\pi}(1-e^{-R^2/t})
\end{multline*}
where $\gamma_e$ is the Euler constant.

 Since we are interested in the behavior for fixed $R$ as $t \downarrow 0$, we may assume $R^2 > t $ so that 
 $$0< \log(u) < u, \quad \forall u > R^2/t.$$
 Then, we estimate 
 $$\int_{R^2/t} ^\infty \log(u) e^{-u} du \leq \int_{R^2/t} ^\infty u e^{-u} du = \frac{R^2 e^{-R^2/t}}{t} + e^{-R^2/t}.$$ 
 This is vanishing rapidly as $t \downarrow 0$ for any fixed $R>0$.  

Therefore for $\wt{T}_1(t)$ we obtain
$$\wt{T}_1(t) = \frac{1}{4\pi} - \frac{\gamma_e}{8\pi} +\frac{\log(t)}{8\pi} + O(t^{\infty}), \quad t\downarrow 0.$$

Hence, $T_1(t)$ has the asymptotic expansion 
\begin{equation}
T_{1}(t)=  \frac{1}{2\pi t}\left( R + \frac{R^{2}\log(R)}{2} - \frac{R^{2}}{4} \right) + \frac{\log(t)}{8\pi}
+ \frac{1}{4\pi} - \frac{\gamma_e}{8\pi} + O(t^{\infty}), \quad t \downarrow 0.
\end{equation}

Let us consider now the second term, $T_2(t)$:
\begin{eqnarray*}
T_2(t) &=& -\frac{1}{\pi^2 t} \int_{0}^{R}\int_{0}^{\pi/2} (1 + \log(r))
(e^{-\frac{r^2}{2t}}e^{\frac{r^2}{2t}\cos(2\phi)} + e^{-\frac{r^2}{2t}}e^{-\frac{r^2}{2t}\cos(2\phi)} )\ r d\phi dr\\
&=& - \frac{1}{\pi^2 t} \int_{0}^{R} (1 + \log(r)) e^{-\frac{r^2}{2t}} \int_{0}^{\pi/2}
(e^{\frac{r^2}{2t}\cos(2\phi)} + e^{-\frac{r^2}{2t}\cos(2\phi)}) \ d\phi \ r dr.
\end{eqnarray*}
The modified Bessel function of the first kind of order zero admits the following integral representation

$$I_{0}(a) = \frac{1}{\pi} \int_0^{\pi} e^{a\cos(\phi)} \ d\phi$$
for $a\in \R$, $a\geq 0$. After a change of variables
$$\int_0^{\pi/2} e^{a\cos(2\phi)} \ d\phi = \frac{\pi}{2} I_{0}(a).$$
Since $\cos(\pi-x) = -\cos(x)$, we obtain
$$T_2(t)= -\frac{1}{\pi t} \int_{0}^{R} (1 + \log(r)) e^{-\frac{r^2}{2t}} I_{0}\left(\frac{r^2}{2t}\right)  \ r dr.$$
We know how to compute these integrals using techniques inspired by \cite{vs}.  Let us write $T_{2,1}$ for the integral with $1$, and $T_{2,2}$ for the integral with $\log(r)$,
so $T_2 = T_{2,1} + T_{2,2}$.  We start by changing variables, letting $u = r^2/2t$, 
$$ T_{2,1} =
- \frac{1}{\pi t}\int_{0}^{R} r e^{-r^2/2t} I_{0}\left(\frac{r^2}{2t}\right) dr = - \frac{1}{\pi} \int_{0}^{R^2/2t} e^{-u} I_{0}(u) du.
$$
Let $I_1(u)$ be the modified Bessel function of first kind of order one.
By \cite[(3) p. 79]{wa} with $\nu = 1$,
\begin{equation} \label{wa79-3} u I_1 '(u) + I_1 (u) = u I_0 (u). \end{equation}
By \cite[(4) p. 79]{wa} with $\nu = 0$,
\begin{equation} \label{wa79-4} u I_0 '(u) = uI_1 (u). \end{equation}
We use these to calculate
\begin{eqnarray*}
\frac{d}{du} \left( e^{-u} u (I_0 (u) + I_1 (u) ) \right) &=& e^{-u} \left(-u I_0 (u) - uI_1 (u) + I_0 (u) + I_1 (u) + u I_0'(u) + u I_1 '(u) \right)\\
&=& e^{-u} \left(  - uI_1 (u) + I_0 (u) + u I_0'(u) \right),  \quad \quad \text{ by (\ref{wa79-3})}\\
&=& e^{-u} I_0 (u), \quad \quad \quad \text{ by (\ref{wa79-4})}.
\end{eqnarray*}
Next, define
\begin{equation} \label{magic} g(u) := e^{-u} u (I_0 (u) + I_1 (u)), \end{equation}
and note that we have computed
$$g'(u) = e^{-u} I_0 (u).$$
We therefore have
$$- \frac{1}{\pi} \int_{0}^{R^2/2t} e^{-u} I_{0}(u) du = -\frac{1}{\pi} \left( g(R^2/2t) - g(0) \right).$$
These Bessel functions are known to satisfy (see \cite{wa})
$$I_0 (0) = 1, \quad I_1 (0) = 0.$$
It follows that $g(0) = 0$, and we therefore obtain that
$$- \frac{1}{\pi} \int_{0}^{R^2/2t} e^{-u} I_{0}(u) du = -\frac{1}{\pi} g(R^2/2t).$$ 
For large arguments, the Bessel functions admit the following asymptotic expansions (see \cite{wa})
$$I_j (x) = \frac{e^x}{\sqrt{2 \pi x}} \left(1 - \frac{1}{2x} \left(j^2 - \frac{1}{4} \right) + \sum_{k=2} ^\infty c_{j,k} x^{-k} \right), \quad x \gg 0, \quad j=0,1.$$
We therefore compute the expansion of $g$ as follows
$$g(u) = \frac{\sqrt{u}}{\sqrt{2\pi}} \left( 2 - \frac{1}{4u} + \sum_{k=2} ^\infty (c_{0,k} + c_{1,k} ) u^{-k} \right), \quad u \gg 1.$$
Consequently, for $u = R^2/2t$ we have
$$g(R^2/2t) = \frac{R}{\sqrt{4\pi t}} \left( 2 - \frac{t}{2R^2} + \sum_{k=2} ^\infty (c_{0,k} + c_{1,k} ) \left( \frac{2t}{R^2} \right)^k \right), \quad t \ll 1.$$
It follows that for small $t$, $T_{2,1}(t)$ has the following asymptotic expansion
$$T_{2,1}(t) = - \frac{R}{\pi \sqrt{4\pi t}} \left( 2 - \frac{t}{2R^2} + \sum_{k=2} ^\infty (c_{0,k} + c_{1,k} ) \left( \frac{2t}{R^2} \right)^k \right), \quad t \ll 1.$$
Therefore 
$$T_{2,1}(t) = - \frac{R}{\pi \sqrt{\pi t}} + O(t^{1/2}) \quad \textrm{ as } \quad t \to 0.$$

Now, let us look at $T_{2,2}$.  Changing variables again $u = r^2/2t$ we obtain
\begin{eqnarray*}
T_{2,2} &=& - \frac{1}{\pi t}\int_{0}^{R} r \log(r) e^{-r^2/2t} I_{0}\left(\frac{r^2}{2t}\right) dr =
- \frac{1}{\pi} \int_{0}^{R^2/2t} \log(\sqrt{2tu}) e^{-u} I_{0}(u) du\\
&=& - \frac{1}{2\pi} \int_{0}^{R^2/2t} \log(u) e^{-u} I_{0}(u) du - \frac{\log(2t)}{2\pi} \int_{0}^{R^2/2t} e^{-u} I_{0}(u) du.\\
\end{eqnarray*}

For the first integral we use (\ref{magic}) and integrate by parts,
$$\int_0 ^{R^2/2t} \log (u) e^{-u} I_0 (u) du = \log (u) g(u) |_0 ^{R^2/2t} - \int_0 ^{R^2/2t} e^{-u} (I_0 (u) + I_1 (u) ) du.$$
Since $g'(u) = e^{-u} I_0 (u)$, we have
$$ \int_0 ^{R^2/2t} e^{-u} (I_0 (u) + I_1 (u) ) du = g(R^2/2t) - g(0) + \int_0 ^{R^2/2t} e^{-u} I_1 (u) du.$$
Note that $I_0 '(u) = I_1 (u)$.  Therefore, we integrate by parts again,
\begin{eqnarray*} \int_0 ^{R^2/2t} e^{-u} I_1 (u) du &=& e^{-u} I_0 (u) |_0 ^{R^2/2t} - \int_0 ^{R^2/2t} -e^{-u} I_0 (u) du,\\
&=&  e^{-u} I_0 (u) |_0 ^{R^2/2t} + g(u) |_0 ^{R^2/2t} .\end{eqnarray*}
Putting these calculations together, we have
$$\int_0 ^{R^2/2t} \log(u) e^{-u} I_0 (u) du
= \log (u) g(u) |_0 ^{R^2/2t} -  2 \left( g(R^2/2t) - g(0) \right) - e^{-u} I_0 (u) |_0 ^{R^2/2t}.$$
Therefore, we have calculated
\begin{eqnarray*}
- \frac{1}{2\pi} \int_{0}^{R^2/2t} \log(u) e^{-u} I_{0}(u) du &=&
 \frac{1}{2\pi} \left( -  \log (u) g(u) |_0 ^{R^2/2t} + \right. \\
& & \left. 2 \left( g(R^2/2t) - g(0) \right) + e^{-u} I_0 (u) |_0 ^{R^2/2t} \right); \\
- \frac{\log(2t)}{2\pi} \int_{0}^{R^2/2t} e^{-u} I_{0}(u) du &=&  - \frac{\log(2t)}{2\pi} \left( g(R^2/2t) - g(0) \right).
\end{eqnarray*}

Since $g(0) = 0$ and $I_0(0)=1$, we have
\begin{multline*}
T_{2,2} (t) \\
= \frac{1}{2\pi} \left( - \log(R^2/2t) g(R^2/2t) + 2g(R^2/2t) + e^{-R^2/2t} I_0 (R^2/2t) -1 -\log (2t) g(R^2/2t) \right)\\
=  \frac{1}{2\pi} \left( - 2 \log (R) g(R^2/2t) + 2g(R^2/2t) + e^{-R^2/2t} I_0 (R^2/2t) -1 \right).
\end{multline*}

We use the asymptotic expansion of $I_0(u)$ for $u \to \infty$ to compute
$$e^{-R^2/2t} I_0 (R^2/2t) = \frac{\sqrt{t}}{R \sqrt{\pi}} \left(1 + \frac{t}{4R^2} + \sum_{k=2} ^\infty c_{0,k}  \left( \frac{2t}{R^2} \right)^k \right), \quad t \ll 1.$$
We therefore obtain that the asymptotic expansion of $T_{2,2}(t)$ is

\begin{multline*}
-\frac{1}{2\pi} -\frac{ R \log R}{\pi \sqrt{4 \pi t}}  \left( 2 - \frac{t}{2R^2} + \sum_{k=2} ^\infty (c_{0,k} + c_{1,k} ) \left( \frac{2t}{R^2} \right)^k \right) \\
+\frac{R}{\pi\sqrt{4\pi t}} \left( 2 - \frac{t}{2R^2} + \sum_{k=2} ^\infty (c_{0,k} + c_{1,k} ) \left( \frac{2t}{R^2} \right)^k \right)\\
 + \frac{\sqrt{t}}{2\pi R \sqrt{\pi}} \left(1 + \frac{t}{4R^2} + \sum_{k=2} ^\infty c_{0,k}  \left( \frac{2t}{R^2} \right)^k \right), \quad t \ll 1.
\end{multline*}

Putting the contributions of $T_{1}$ and $T_{2}$ together, we obtain 

\begin{multline*}
T_{1} + T_2 (t)=  \frac{1}{2\pi t}\left( R + \frac{R^{2}\log(R)}{2} - \frac{R^{2}}{4} \right)  
-\frac{ R \log R}{\pi \sqrt{\pi t}} + \frac{\log(t)}{8\pi}
- \frac{1}{4\pi} - \frac{\gamma_e}{8\pi} + O(t^{1/2}),
\end{multline*}
which completes the proof of the claim. \end{proof} 

To determine the variational Polyakov formula, we combine the ingredients from the claim together with the contribution of the the other parts of the sector. Recalling the parametrix construction  in \S \ref{ss:hkp}, and that we use $*$ for the index in $\{\alpha,i,e,a,c\}$, we have that
$$\Tr\left( \mathcal M_{(1 + \log(r))} e^{-t\Delta_{\pi/2}}\right) =
\int_0^1 \int_0^{\pi/2} \left( 1 + \log(r) \right) H_{p}(r,\phi,r,\phi,t) \ r d\phi dr + O(t^{\infty})$$
$$= \Tr\left(e^{-t\Delta_{\pi/2}}\right) + \int_0^1 \int_0^{\pi/2} \log(r) \left(\sum_{*} \chi_{*}(r,\phi) H_{*}(r,\phi,r,\phi,t)\right) \ r d\phi dr+ O(t^{\infty})$$
\begin{multline*}
\sum_{*} \int_0^1 \int_0^{\pi/2} \log \cdot \chi_{*} \cdot H_{*}\ dA = 
\int_{\cN_{\pi/2}} \log \cdot \chi_{\pi/2} \cdot p_c \ dA + \int_{\cN_c} \log \cdot \chi_{c} \cdot H_{S_{\pi/2}} \ dA \\
+ \int_{S_{\pi/2}\setminus (\cN_{\pi/2} \cup \cN_c) } \log \cdot \left(\sum_{*} \chi_{*} \cdot H_{*}\right) \ dA
\end{multline*}
where to simplify the notation, $dA=r d\phi dr$. Now we have to look for the coefficients $a_{2,0}$ and $a_{2,1}$ in the short time asymptotic expansion (\ref{exp-exists}). 
Recall that the constant term in the asymptotic expansion of the heat trace, $\Tr(e^{-t\Delta_{\pi/2}})$ was computed in equation (\ref{t0-heattrace}) and in this case it is 
$$\zeta_{\Delta_{\pi/2}}=-\frac{1}{12} + 3 \left( \frac{\pi^2 + \pi^2/4}{24 \pi \pi/2} \right) = \frac{11}{48}.$$
Recalling the factor of $2/\alpha$ with $\alpha = \pi/2$ in this case, the total contribution from the trace of the heat kernel is 
$$\frac{4}{\pi} \ \zeta_{\Delta_{\pi/2}}=\frac{11}{12\pi}.$$
Since this term also includes the purely local corner contribution from the origin, which is already contained in the calculation of 
$$\int_{0}^{R}\int_{0}^{\pi/2} \frac{4}{\pi} ( 1 + \log(r)) p_{C}(t,r,\phi,r,\phi)\ r\ dr\ d\phi$$
in Claim 1, we need to remove this part, which is, since $\alpha = \pi/2$,  
$$\frac{2}{\alpha} \left( \frac{\pi^2 - \pi^2/4}{24 \pi (\pi/2)} \right) = \frac{1}{4\pi}.$$
So we have
$$\frac{11}{12\pi} - \frac{1}{4\pi} = \frac{2}{3\pi}.$$

As we proved in \S \ref{ss:pthexpexists} above, the integrals over $\cN_c$ and $S_{\pi/2}\setminus (\cN_{\pi/2}\cup \cN_c)$ do not contribute to the coefficients $a_{2,0}$ and $a_{2,1}$. 

Consequently, putting all the terms which contribute to the formula together, gives 
$$\frac{\log(t)}{8\pi} - \frac{1}{4\pi} - \frac{\gamma_e}{8\pi} + \frac{2}{3\pi}.$$ 

The variational Polyakov formula for the quarter circle is consequently 
\begin{equation} \label{variation-qc}
\left.\frac{\pa}{\pa \gamma} \big(-\log(\det(\Delta_{S_\gamma}))\big)\right|_{\gamma=\pi/2} = 
\frac{-\gamma_e}{4\pi} + \frac{5}{12 \pi}.
\end{equation} 
\qed


\section{Carslaw-Sommerfeld heat kernel} \label{Scarslaw} 
In this section we use the explicit form of the heat kernel on an infinite angular sector with opening angle $\alpha$ given by Carslaw in \cite{Carslaw} to prove the existence of the asymptotic expansion of $\Tr(\cM_{\chi_{\cN_\alpha}\log(r)}e^{-t\Delta_{\alpha}})$. At the same time we compute the contribution of this part to the total Polyakov formula. This will complete the proofs of Theorems \ref{th-exp-exists} and \ref{allsectors}. 

In \cite{Carslaw}, Carslaw gave the following formula for the heat kernel on an infinite angular sector with opening angle $\alpha$:
\begin{equation}
\wt{H}_{\alpha}(r,\phi,r',\phi',t) = \frac {e^{-(r^2 + r'^2)/4t}}{8\pi \alpha t}
\int_{A_\phi} e^{rr'\cos(z -\phi)/2t} \frac{e^{i\pi z/\alpha}}{e^{i\pi z/\alpha}-e^{i\pi \phi'/\alpha}} dz
\label{eq:CarslawHK}
\end{equation}
where $A_\phi$ is the contour in the $\C_z$-plane that is the union of the two following contours: one contained in
$\{z | \phi -\pi < \re(z) < \phi +\pi, \im(z) > 0\}$ going from $\phi +\pi +i\infty$ to $\phi -\pi +i\infty$,
and the other one contained in
$\{z | \phi -\pi < \re(z) < \phi +\pi, \im(z) < 0\}$ going from $\phi -\pi -i\infty$ to $\phi+\pi - i\infty$.  In Figure \ref{fig:hkc1} we reproduce original Carslaw's contour from \cite{Carslaw}.  
\begin{figure} \includegraphics[height=170pt]{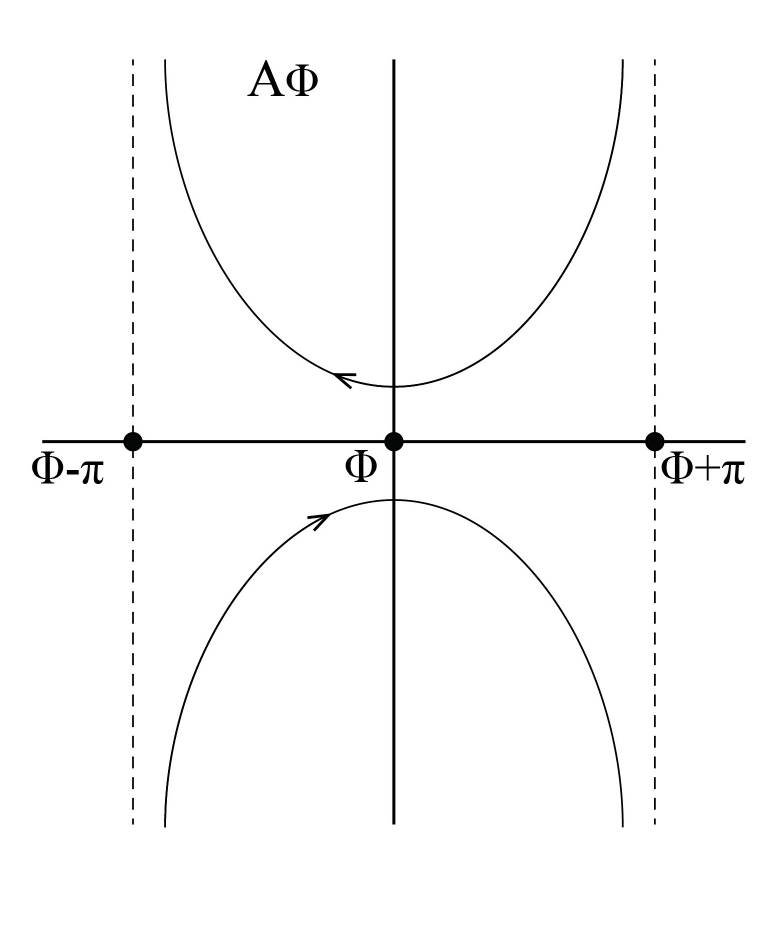} \caption{Contour $A_{\phi}$ in the $\C_z$ plane.}   \label{fig:hkc1} \end{figure} 

As noted there, this contour can be deformed into a different contour, depicted in Figure \ref{fig:hkc2}, that is composed of the following curves:
\begin{enumerate}
\item $\ell_1 = \{\phi -\pi + i y, y\in \R\}$ oriented from $-i\infty$ to $i\infty$,
\item $\ell_2 = \{\phi +\pi + i y, y\in \R\}$ oriented from $i\infty$ to $-i\infty$, and
\item small circles around the poles in the interval $z \in ]\phi -\pi,\phi +\pi[$. Since we will be considering $\phi$ close to $\phi'$, 
poles on the lines will not appear. 
\end{enumerate}
Notice that at the lines $\ell_1$ and $\ell_2$, $\cos(z-\phi) <0$ since
$$\cos(z-\phi) = \cos(x -\phi + iy) = \cos(\pm \pi +iy) = -\cosh(y)<0$$
the integrals over the straight lines converge and will vanish in the limit as $t\to 0$ (c.f. \cite{Carslaw} (iii) on p. 367).  
\begin{figure} \includegraphics[height=170pt]{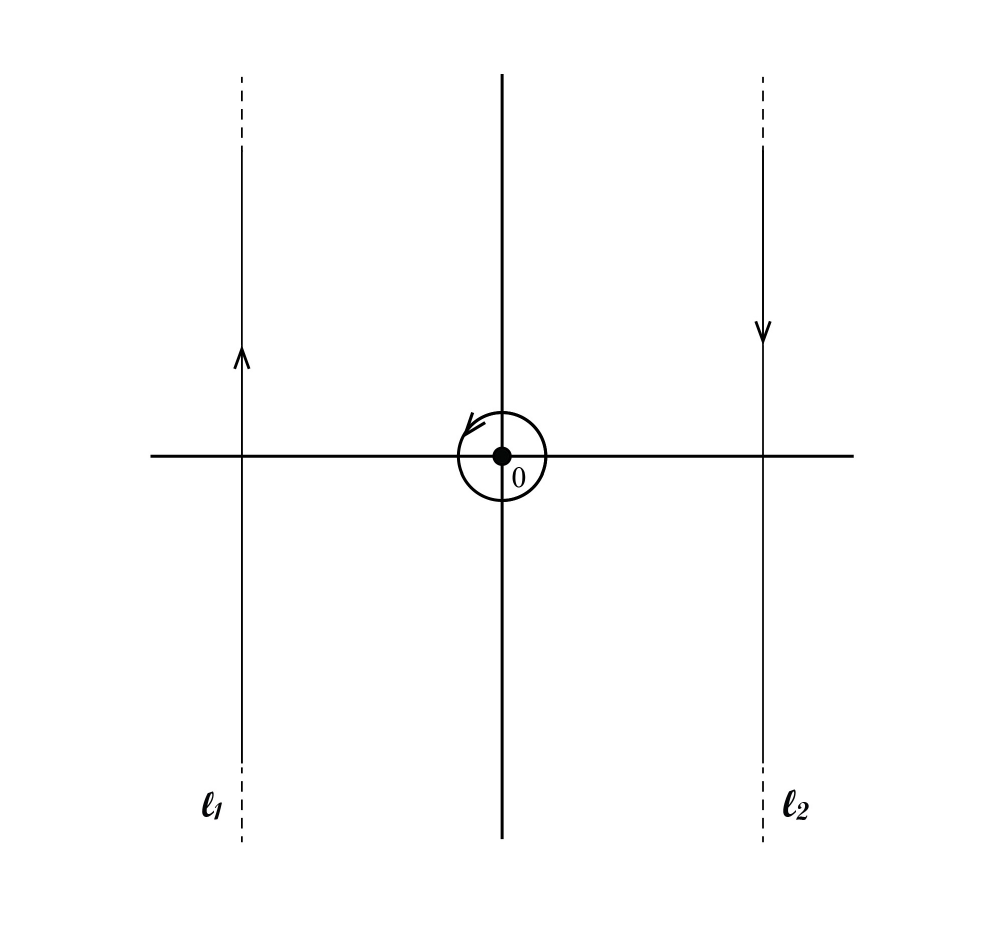} \caption{Deformed contour. To simplify the picture we assume only one pole at $z=0$.} \label{fig:hkc2}\end{figure} 

Unfortunately, this kernel does not correspond to the Dirichlet Laplacian since it does not satisfy the boundary
condition. To remedy this, we use the method of images as in \cite{AuSa}.  We first re-write (\ref{eq:CarslawHK}) with a change of coordinates, 
$w := z - \phi$, and write $A_0$ for the contour $A_\phi$ defined above with $\phi=0$ in the $\C_w$ plane, then
$$\wt{H}_{\alpha}(r,\phi,r',\phi',t) = \frac {e^{-(r^2 + r'^2)/4t}}{8\pi \alpha t}
\int_{A_0} e^{rr'\cos(w)/2t} \frac{1}{1-e^{i\pi (\phi'-\phi -w)/\alpha}} dw.$$

This is the so-called ``direct term'' corresponding to $\phi'-\phi$.  By the method of images, to obtain the Dirichlet heat kernel, we must incorporate the term co\-rres\-ponding to $\phi' + \phi$, this is 
$$\wt{H}_{\alpha}(r,-\phi,r',\phi',t) = \frac {e^{-(r^2 + r'^2)/4t}}{8\pi \alpha t}
\int_{A_0} e^{rr'\cos(w)/2t} \frac{1}{1-e^{i\pi (\phi'+\phi -w)/\alpha}} dw,$$
and it is called the ``reflected term''. Consequently, the Dirichlet heat kernel is 
\begin{eqnarray}
H_{\alpha}(r,\phi,r',\phi',t) &=& \frac {e^{-(r^2 + r'^2)/4t}}{8\pi \alpha t}
\left(\int_{A_0} e^{rr'\cos(w)/2t} \frac{1}{1-e^{i\pi (\phi'-\phi -w)/\alpha}} dw \right.\notag \\
& & \left. -\int_{A_0} e^{rr'\cos(w)/2t} \frac{1}{1-e^{i\pi (\phi'+\phi -w)/\alpha}} dw
\right) \label{eq:DFhk} 
\end{eqnarray}


\subsection{Contribution from the poles}
Let us define the following functions:
$$f_1(z) = \frac{e^{rr'\cos(z)/2t}}{1-e^{i \pi (\phi'-\phi -z)/\alpha}}, \quad f_2 (z) = \frac{e^{rr'\cos(z)/2t}}{1-e^{i \pi (\phi'+\phi -z)/\alpha}},$$
The first thing to do is to compute the residues at the poles of $f_1$ and $f_2$ within the interval $]-\pi, \pi[$, for $\phi'$ and $\phi$ close to each other but different. 
There are two reasons for this assumption. The first reason is that we would like to have a general expression for the heat kernel close to the diagonal, not only at the diagonal. The second reason is more serious, and arises due to the possibility of non-commuting limits.  For example, to determine the terms in the heat kernel arising from the residues at the poles, the correct order of computations is first to compute with the heat kernel for $\phi'$ and $\phi$ close, and then afterwards set $\phi'=\phi$.  In some cases, if one first sets $\phi'=\phi$ and then attempts to compute, the result is incorrect.  
In general the function $f_1$ has poles at the points 
$$(\phi' - \phi - z)\pi/\alpha = 2k\pi \iff \phi' - \phi - z = 2k\alpha \iff z= \phi' - \phi +2k\alpha, \, k \in \Z.$$
Similarly, $f_2$ has poles at the points 
$$(\phi' + \phi - z)\pi/\alpha = 2j\pi \iff \phi' + \phi - z = 2j\alpha \iff z= \phi' + \phi +2j\alpha, \, j \in \Z.$$
We first assume without loss of generality $\phi' > \phi$, later when we want to compute the trace we make $\phi' = \phi$. 

Then, the poles of $f_1$ and $f_2$ which lie in the interval $]-\pi, \pi[$ are those with 
\begin{equation}
k, j \in \Z, \quad \frac{-\pi}{2\alpha} < \frac{\phi' - \phi}{2\alpha} + k < \frac{\pi}{2\alpha}, 
\quad \textrm{ and } \quad \frac{-\pi}{2\alpha} < \frac{\phi' + \phi}{2\alpha} + j < \frac{\pi}{2\alpha},
\label{eq:condjkpoles}
\end{equation}
respectively.  
\subsubsection{Pole contribution from the direct term}  
We compute the residues at the poles of $f_1$: 
\begin{eqnarray*}
{\text{Res}}_{z=\phi'-\phi+2 k\alpha} \frac{e^{rr'\cos(z)/2t}}{1-e^{i\pi(\phi'-\phi-z)/\alpha}}
&=& \lim_{z \to \phi'-\phi +2 k \alpha}
\frac{(z-(\phi'-\phi - 2k \alpha))e^{rr'\cos(z)/2t}}{1-e^{i\pi(\phi'-\phi-z)/\alpha}}\\
&=& \frac{\alpha}{i\pi} e^{rr'\cos(\phi'-\phi + 2k\alpha)/2t}
\end{eqnarray*}
Therefore, the integrals over the contours surrounding these poles are, by the Residue Theorem, 
$$2\alpha e^{rr'\cos(\phi'-\phi + 2k\alpha)/2t}.$$  

The poles which are contained in the interval $]-\pi, \pi[$ \em depend \em on the value of the angles $\phi$ and $\phi'$. That is why, in order to have a comprehensive formula close to the diagonal, we restrict their range by assuming $\phi'$ and $\phi$ are close.  We compute the range of all these poles.  Since we are assuming $\phi' > \phi$, it follows that $\phi' - \phi > 0$. 
Without loss of generality, we may assume for a short moment that $\phi=0$ and $\phi'\leq \alpha/4$, since we are interested in the case when $\phi$ and $\phi'$ are close. The equation for $k$ becomes
$$\frac{-\pi}{2\alpha} - \frac{\phi'}{2\alpha} <  k < \frac{\pi}{2\alpha} - \frac{\phi'}{2\alpha}, \quad \textrm{ with } \quad 
0<\frac{\phi'}{2\alpha}\leq \frac{1}{8}.$$
Consequently, the smallest pole of $f_1$ occurs at 
\begin{equation} \label{kmin} k_{min} = \ceil*{ \frac{-\pi}{2\alpha} }. \end{equation} 

For the largest pole of $f_1$ we have two cases: $\frac{\pi}{2\alpha}\not\in \Z$ and otherwise.
If $\frac{\pi}{2\alpha}\in \Z$, then $k_{max} = \frac{\pi}{2\alpha} - 1$. If, on the contrary, $\frac{\pi}{2\alpha} \not\in \Z$ then 
$$\frac{\pi}{2\alpha} = \floor*{\frac{\pi}{2\alpha}} + \delta, \textrm{ for some } \delta\in ]0,1[.$$
In this case, we shall and may assume in addition that $\phi'/2\alpha < \delta$. This will be, in terms of $\phi$ and $\phi'$,
$\phi'-\phi < \delta 2\alpha$. Therefore the largest pole occurs at $\floor*{\frac{\pi}{2\alpha}}$. Summarizing we obtain: 
\begin{equation} \label{kmax} k_{max} = \floor*{\frac{\pi}{2\alpha}} \textrm{ if } \frac{\pi}{2\alpha} \not\in \Z, \textrm{ otherwise } k_{max} = \frac{\pi}{2\alpha} - 1. \end{equation} 
Therefore the contribution to the heat kernel is:
$$\frac {e^{-(r^2 + r'^2)/4t}}{8\pi \alpha t}\sum_{k\in [k_{\min},k_{\max}]} 2\alpha e^{rr'\cos(\phi'-\phi + 2k\alpha)/2t}.$$


To compute the Polyakov formula contributions arising from these poles, we restrict to the diagonal by setting $\phi' = \phi$, $r'=r$ in the expression above. We then multiply by $\log(r)$ and integrate over a finite sector of radius $R$:
\begin{multline*}
\int_{0}^{R}\int_{0}^{\alpha} \sum_{k\in [k_{\min},k_{\max}]} \frac {e^{-(r^2 )/2t}}{8\pi \alpha t} \log(r) 2\alpha e^{r^2\cos(2k\alpha)/2t}
\ d\phi \ r \ dr \\
= \sum_{k\in [k_{\min},k_{\max}]} \frac{\alpha}{4\pi t}  \int_{0}^{R} e^{-r^2(1-\cos(2k\alpha))/2t}  \log(r)\ r \ dr .
\end{multline*}
We compute each of these integrals separately. 

If $\cos(2k\alpha) = 1$, 
$$\frac{\alpha}{4\pi t} \int_{0}^{R} e^{-r^2(1-\cos(2k\alpha))/2t}  \log(r)\ r \ dr = \frac{\alpha}{4\pi t}\int_{0}^{R} \log(r)\ r \ dr$$
then the coefficients of $t^0$ and $\log(t)$ as $t\downarrow 0$ vanish; there is no contribution from such $k$. 
We note that 
$$\cos(2k\alpha) = 1 \iff \exists \ell \in \Z \textrm{ with } k = \frac{\ell\pi}{\alpha}.$$ 

Assuming this is not the case, we use substitution in the integral, letting 
$$u = r^2 (1-\cos(2k\alpha))/(2t), \quad du = r dr (1-\cos(2k\alpha))/t.$$
Thus we consider
\begin{equation}
\frac{\alpha}{4\pi (1-\cos(2k\alpha))} \int_0 ^{R^2 (1-\cos(2k\alpha))/(2t)} e^{-u} \log(\sqrt{2tu} (1-\cos(2k\alpha))^{-1/2}) du. \label{auxaedt}
\end{equation}

Next, using the same argument as in the computation of $\wt{T}_1$ in the case of the quarter circle, we compute
\begin{multline*}
\int_0 ^{R^2 (1-\cos(2k\alpha))/(2t)} e^{-u} \log(2tu (1-\cos(2k\alpha))^{-1}) du\\ = \int_0 ^\infty e^{-u} \log(u) du 
-\int_{R^2 (1-\cos(2k\alpha))/(2t)}^\infty e^{-u} \log(u) \ du \\ + \left(\log(2/(1-\cos(2k\alpha)) +  \log(t) \right)(1-e^{-R^2 (1-\cos(2k\alpha))/(2t)})
\end{multline*}
In the same way as before the integral in the middle vanishes rapidly as $t\downarrow 0$. It follows from a straightforward calculation that the constant term in the asymptotic expansion as $t\to 0$ in the integral in (\ref{auxaedt}) is
\begin{equation} \label{f1-contribution1} \frac{\alpha}{8\pi (1-\cos(2k\alpha))} \left( -\gamma_e + \log\left( \frac{2}{1-\cos(2k\alpha)}\right) \right), \end{equation} 
and the $\log(t)$ term is 
\begin{equation} \label{f1-logcontribution} \frac{\alpha \log(t)}{8\pi(1-\cos(2k\alpha))}. \end{equation} 
Let $W_{\alpha}$ be defined by 
$$W_{\alpha} = \left\{ k \in \left( \Z \bigcap \left[k_{min},  k_{max}\right]\right) \setminus \left\{ \frac{\ell\pi}{\alpha} \right\}_{\ell \in \Z}\right\}.$$
Hence, the total contribution to the variational Polyakov formula will come from 
\begin{multline} \label{poles-f1-polyakov} \frac{\alpha}{8\pi(1-\cos(2k\alpha))} \sum_{k\in W_\alpha} 
\left(-\gamma_e + \log\left( \frac{2}{1-\cos(2k\alpha)}\right) + \log(t) \right) \end{multline} 

Recalling the factor of $\frac{2}{\alpha}$ and equation (\ref{variation}), the total contribution to the variational Polyakov formula is: 
$$\sum_{k\in W_{\alpha}} \frac{1}{4 \pi (1-\cos(2k\alpha))} \left(-2\gamma_e + \log\left( \frac{2}{1-\cos(2k\alpha)}\right)\right).$$

\subsubsection{Pole contribution from the reflected term} 
The residues at the poles of $f_2$ are: 
\begin{eqnarray*}
{\text{Res}}_{z=\phi'+\phi+2 j\alpha} \frac{e^{rr'\cos(z)/2t}}{1-e^{i\pi(\phi'+\phi-z)/\alpha}}
&=& \lim_{z \to \phi'+\phi +2 j \alpha}
\frac{(z-(\phi'+\phi - 2j \alpha))e^{rr'\cos(z)/2t}}{1-e^{i\pi(\phi'+\phi-z)/\alpha}}\\
&=& \frac{\alpha}{i\pi} e^{rr'\cos(\phi'+\phi + 2j\alpha)/2t}
\end{eqnarray*}
Therefore, the integrals over the contours surrounding these poles are, by the Residue Theorem, 
$$2 \alpha e^{rr'\cos(\phi'+\phi + 2j\alpha)/2t}.$$

Note that the location of the poles such that $z \in ]-\pi, \pi[$ \em depend \em on the value of $\phi$.  In particular, the set 
$$V_{\phi}:=]\frac{-\pi-2\phi}{2\alpha}, \frac{\pi-2\phi}{2\alpha}[\cap \Z$$
 depends on $\phi$. 
At first glance, this would seem to be problematic.  However, we shall see that by first integrating over $\phi \in [0, \alpha]$, a wonderful simplification occurs; this is made precise by the following Lemma.  

\begin{lemma} \label{l:marvsimpl}
For any $\alpha \in ]0,\pi[$, 
$$\int_0^\alpha \sum_{j \in ]\frac{-\pi-2\phi}{2\alpha}, \frac{\pi-2\phi}{2\alpha}[\cap \Z}  e^{r^2\cos(2\phi + 2j\alpha)/2t}\ d\phi
= \frac12 \int_{-\pi}^{\pi} e^{r^2\cos(\varphi)/2t}\ d\varphi = \pi I_0 (r^2/2t),$$
where $I_0$ is the modified Bessel function. 
\end{lemma}

\begin{proof} 
The proof goes by cases.  For different values of $\alpha$, we look the values of $j$ which satisfy the equation
$$-\pi <2\phi +2\alpha j < \pi, \quad \textrm{ with } \quad 0\leq \phi \leq \alpha$$
The sets $V_{\phi}$ are constant on intervals, so we split the integral over $[0,\alpha]$ into the integral over these subintervals; then we change variables $\varphi=2\phi +2j\alpha$, rearrange, and obtain the final result. 

We consider the following cases, and note that it is straightforward to verify that for any $\alpha \in ]0,\pi[$, precisely one of these cases holds:  
\begin{enumerate}
\item $\alpha=\frac{\pi}{2k+1}$,
\item $\alpha=\frac{\pi}{2k}$,
\item $\alpha=\frac{\pi}{2k-2\varepsilon} >\frac{\pi}{2k}$, with $k\geq 1$ and $\frac12>\varepsilon>0$, and,
\item $\alpha=\frac{\pi}{2k+1-2\varepsilon} >\frac{\pi}{2k+1}$, with $k\geq 1$ and $\frac12 >\varepsilon>0$. 
\end{enumerate}

 \noindent {\it Case $\alpha=\frac{\pi}{2k+1}$}: Here,  
$$j \in V_\phi \iff -k-\frac12 -\frac{\phi}{\alpha} < j < k +\frac12 -\frac{\phi}{\alpha}.$$ 

Then the set $V = V_\phi$ takes three different values: 
\begin{itemize}
\item On $[0,\alpha/2[$, $V=\{-k,\dots,k\}$,
\item at $\{\alpha/2\}$, $V=\{-k,\dots,k-1\}$
\item on $]\alpha/2,\alpha[$, $V=\{-k-1,\dots,k-1\}$
\item at $\alpha$, $V=\{-k-1,\dots,k-2\}$
\end{itemize}
Then, we have  
\[  \int_0^\alpha \sum_{j \in V_{\phi}} e^{r^2\cos(2\phi + 2j\alpha)/2t}\ d\phi \]
\[= \int_{0}^{\alpha/2}\sum_{j=-k}^k e^{r^2\cos(2\phi + 2j\alpha)/2t}\ d\phi + \int_{\alpha/2}^{\alpha}\sum_{j=-k-1}^{k-1} e^{r^2\cos(2\phi + 2j\alpha)/2t}\ d\phi \]
$$= \frac12 \sum_{j=-k}^k\int_{2j\alpha}^{2j\alpha+\alpha} e^{r^2\cos(\varphi)/2t}\ d\varphi +
\frac12 \sum_{j=-k-1}^{k-1} \int_{\alpha + 2j\alpha}^{2\alpha+2j\alpha} e^{r^2\cos(\varphi)/2t}\ d\varphi $$ 
$$= \frac12 \sum_{j=-k}^k\int_{2j\alpha}^{\alpha(2j+1)} e^{r^2\cos(\varphi)/2t}\ d\varphi +
\frac12 \sum_{j=-k-1}^{k-1} \int_{\alpha(2j+1)}^{\alpha(2j+2)} e^{r^2\cos(\varphi)/2t}\ d\varphi $$
$$= \frac12 \int_{(-2k-1)\alpha}^{-2k\alpha} e^{r^2\cos(\varphi)/2t}\ d\varphi + \frac12 \sum_{j=-k}^{k-1}\int_{2j\alpha}^{\alpha(2j+1)} e^{r^2\cos(\varphi)/2t}\ d\varphi + \frac12 \int_{2k\alpha}^{(2k+1)\alpha} e^{r^2\cos(\varphi)/2t}\ d\varphi $$
$$= \frac12 \int_{-\pi}^{\pi} e^{r^2\cos(\varphi)/2t}\ d\varphi. $$

\noindent {\it Case $\alpha=\frac{\pi}{2k}$}: In this case, $j \in V_\phi$ must satisfy $-k -\frac{\phi}{\alpha} < j < k -\frac{\phi}{\alpha}$.   
The set $V= V_\phi$ again takes three different values:
\begin{itemize}
\item At $\{0\}$, $V=\{-k+1,\dots,k-1\}$, 
\item on $]0,\alpha[$, $V=\{-k,\dots,k-1\}$
\item at $\{\alpha\}$, $V=\{-k,\dots,k-2\}$
\end{itemize}
The proof in this case follows straightforward.\\

\noindent {\it Case $\alpha=\frac{\pi}{2k-2\varepsilon} >\frac{\pi}{2k}$, with $k\geq 1$ and $\frac12>\varepsilon>0$}: In this case, $j \in V_\phi$ must satisfy 
$-k + \varepsilon -\frac{\phi}{\alpha} < j < k -\varepsilon -\frac{\phi}{\alpha}$. Then the set $V$ takes three different values:
\begin{itemize}
\item On $[0,\alpha\varepsilon]$, $V=\{-k+1,\dots,k-1\}$,
\item on $]\alpha\varepsilon,(1-\varepsilon)\alpha[$, $V=\{-k,\dots,k-1\}$
\item on $[(1-\varepsilon)\alpha,\alpha]$, $V=\{-k,\dots,k-2\}$
\end{itemize} In this case we compute
\begin{multline*}
 \int_0^\alpha \sum_{j \in V_{\phi}} e^{r^2\cos(2\phi + 2j\alpha)/2t}\ d\phi 
= \int_{0}^{\varepsilon\alpha}\sum_{j=-k+1}^{k-1} e^{r^2\cos(2\phi + 2j\alpha)/2t}\ d\phi \\  + 
\int_{\varepsilon\alpha}^{(1-\varepsilon)\alpha}\sum_{j=-k}^{k-1} e^{r^2\cos(2\phi + 2j\alpha)/2t}\ d\phi +
\int_{(1-\varepsilon)\alpha}^{\alpha}\sum_{j=-k}^{k-2} e^{r^2\cos(2\phi + 2j\alpha)/2t}\ d\phi\\
= \frac12 \sum_{j=-k+1}^{k-1} \int_{2j\alpha}^{2\varepsilon\alpha+2j\alpha} e^{r^2\cos(\varphi)/2t}\ d\varphi  +
\frac12 \sum_{j=-k}^{k-1} \int_{2\alpha(j+\varepsilon)}^{2\alpha(1-\varepsilon +j)} e^{r^2\cos(\varphi)/2t}\ d\varphi\\ +
\frac12 \sum_{j=-k}^{k-2}\int_{2(1-\varepsilon)\alpha+2j\alpha}^{2\alpha+2j\alpha} e^{r^2\cos(\varphi)/2t}\ d\varphi
\end{multline*}

Let $J(\varphi)$ denote $e^{r^2\cos(\varphi)/2t}$, then 
\begin{multline*}
 \int_0^\alpha \sum_{j \in V_{\phi}} e^{r^2\cos(2\phi + 2j\alpha)/2t}\ d\phi = \frac12 \sum_{j=-k+1}^{k-2} \int_{2j\alpha}^{2\alpha+2j\alpha} J\ d\varphi\\
+ \frac12 \left(\int_{2\alpha(k-1)}^{2\alpha(k-1+\varepsilon)} J\ d\varphi + \int_{2\alpha(-k+\varepsilon)}^{2\alpha(-k+1-\varepsilon)} J\ d\varphi
+ \int_{2\alpha(k-1+\varepsilon)}^{2\alpha(k-1+1-\varepsilon)} J\ d\varphi + \int_{2\alpha(-k+1-\varepsilon)}^{2\alpha(-k+1)}J\ d\varphi \right)\\
= \frac12 \left(\int_{2\alpha(-k+\varepsilon)}^{2\alpha(-k+1)} J\ d\varphi +
\int_{2\alpha(-k+1)}^{2\alpha(k-1)} J\ d\varphi +
\int_{2\alpha(k-1)}^{2\alpha(k-\varepsilon)} J\ d\varphi \right)= \frac12 \int_{-\pi}^{\pi} e^{r^2\cos(\varphi)/2t}\ d\varphi.
\end{multline*}

\noindent {\it Case $\alpha=\frac{\pi}{2k+1-2\varepsilon} >\frac{\pi}{2k+1}$, with $k\geq 1$ and $\frac12 >\varepsilon>0$}: The equation becomes 
$-k -\frac12 + \varepsilon -\frac{\phi}{\alpha} < j < k +\frac12 -\varepsilon -\frac{\phi}{\alpha}$. Then the set $V$ takes three different values:
\begin{itemize}
\item On $[0,\alpha(\frac12-\varepsilon)[$, $V=\{-k,\dots,k\}$,
\item on $]\alpha(\frac12-\varepsilon),\alpha(\frac12+\varepsilon)]$, $V=\{-k,\dots,k-1\}$
\item on $](\frac12+\varepsilon)\alpha,\alpha]$, $V=\{-k-1,\dots,k-1\}$
\end{itemize}
Here we have
\begin{multline*}
 \int_0^\alpha \sum_{j \in V_{\phi}} e^{r^2\cos(2\phi + 2j\alpha)/2t}\ d\phi 
= \int_{0}^{(\frac12-\varepsilon)\alpha}\sum_{j=-k}^{k} e^{r^2\cos(2\phi + 2j\alpha)/2t}\ d\phi \\ +
\int_{(\frac12-\varepsilon)\alpha}^{(\frac12+\varepsilon)\alpha}\sum_{j=-k}^{k-1} e^{r^2\cos(2\phi + 2j\alpha)/2t}\ d\phi +
\int_{(\frac12+\varepsilon)\alpha}^{\alpha}\sum_{j=-k-1}^{k-1} e^{r^2\cos(2\phi + 2j\alpha)/2t}\ d\phi\\
= \frac12\sum_{j=-k}^{k} \int_{2j\alpha}^{(1-2\varepsilon+2j)\alpha} e^{r^2\cos(\varphi)/2t}\ d\varphi +
\frac12\sum_{j=-k}^{k-1} \int_{(1-2\varepsilon+2j)\alpha}^{(1+2\varepsilon+2j)\alpha} e^{r^2\cos(\varphi)/2t}\ d\varphi \\ +
\frac12\sum_{j=-k-1}^{k-1} \int_{(1+2\varepsilon+2j)\alpha}^{2(j+1)\alpha} e^{r^2\cos(\varphi)/2t}\ d\varphi 
=\frac12\sum_{j=-k}^{k-1} \int_{2j\alpha}^{2(j+1)\alpha} e^{r^2\cos(\varphi)/2t}\ d\varphi \\ +
 \frac12 \left(\int_{2k\alpha}^{(1-2\varepsilon+2k)\alpha} e^{r^2\cos(\varphi)/2t}\ d\varphi + 
\int_{(1+2\varepsilon-2k-2)\alpha}^{-2k\alpha}e^{r^2\cos(\varphi)/2t}\ d\varphi \ \right) \\
= \frac12 \int_{(-2k-1+2\varepsilon)\alpha}^{(2k+1-2\varepsilon)\alpha} e^{r^2\cos(\varphi)/2t}\ d\varphi 
= \frac12 \int_{-\pi}^{\pi} e^{r^2\cos(\varphi)/2t}\ d\varphi.
\end{multline*}
Recalling the formula for the modified Bessel function of the second type,
$$I_0 (x) = \frac{1}{\pi} \int_0 ^\pi e^{x \cos(\theta)} d\theta,$$
we see that 
$$\frac{1}{2} \int_{-\pi} ^\pi e^{r^2 \cos(\varphi)/2t} d\varphi =  \int_0 ^\pi e^{r^2 \cos(\varphi)/2t} d\varphi = \pi I_0 (r^2/2t).$$
This completes the proof of the lemma.
\end{proof}
 
To compute the contribution to the Polyakov formula from these poles, we recall that the residues at the poles of $f_2$, restricted to the diagonal, give $2\alpha e^{r^2 \cos(2\phi + 2j\alpha)/2t}$.  Furthermore, there is a factor of $\frac{e^{-r^2/2t}}{8\alpha \pi t}$, and finally, the reflected term is subtracted in the definition of the heat kernel.  The preceding Lemma takes care of the integration with respect to $\phi$, and so it remains to analyze 
$$-\frac{2 \alpha}{8 \alpha t} \int_0 ^R e^{-r^2/2t} \log(r) I_0 (r^2/2t) r dr = \frac{-1}{4 t} \int_0 ^R e^{-r^2/2t} \log(r) I_0 (r^2/2t) r dr = \frac{\pi}{4}T_{2,2} (t),$$ 
where $T_{2,2} (t)$ was defined in \S \ref{s:qcrd} .
There, we computed the $t^0$ term in the expansion of $T_{2,2} (t)$ to be $ -\frac{1}{2\pi}$. There is no $\log(t)$ term coming from $T_{2,2} (t)$.  
We therefore have a contribution from the reflected term by 
$$-\frac{\pi}{4} \frac{1}{2\pi} = -\frac{1}{8}.$$
Recalling the factor of $2/\alpha$, the contribution to the variational Polyakov formula from these poles is simply 
\begin{equation} \label{reflected} -\frac{1}{4\alpha}. \end{equation} 

\subsection{Contribution from the integrals over the lines}
The line $\ell_1$ can be parameterized by $\ell_1(s) = -\pi+is$, $-\infty <s< \infty$,
and $\ell_2(s) = \pi + is$, now with $s$ going from $\infty$ to $-\infty$.  Write
$$\int_{\ell_1 \cup \ell_2} (f_1(z) - f_2(z)) dz = L_1 + L_2.$$

Note that if $\alpha = \pi/n$, for some $n\in \N$, then $f_1$ is periodic of period $2\pi$, 
$$f_1(z+2\pi)= \frac{e^{rr'\cos(z+2\pi)/2t}}{1-e^{i n \pi (\phi'-\phi -z -2\pi)/\pi}} = \frac{e^{rr'\cos(z)/2t}}{1-e^{i n (\phi'-\phi -z -2\pi)}}
$$
$$ = \frac{e^{rr'\cos(z)/2t}}{1-e^{i n (\phi'-\phi -z)}} = f_1(z).$$

Therefore $f_1$ takes the same values in the lines $\ell_1$ and $\ell_2$.  Since they have contrary orientation, the
integrals sum to zero.  The same holds for $f_2$, since 
$$f_2(z+2\pi)= \frac{e^{rr'\cos(z+2\pi)/2t}}{1-e^{i n \pi (\phi'+\phi -z -2\pi)/\pi}} = \frac{e^{rr'\cos(z)/2t}}{1-e^{i n (\phi'+\phi -z -2\pi)}}
$$
$$ = \frac{e^{rr'\cos(z)/2t}}{1-e^{i n (\phi'+\phi -z)}} = f_2(z).$$

In the general case, consider first $f_1$: 
\begin{eqnarray*}
L_1 &=& \int_{\ell_1\cup \ell_2} f_1(z) dz\\ 
&=& i \int_{-\infty}^{\infty} \left(\frac{e^{-rr'\cosh(s)/2t}}{1-e^{i\frac{\pi}{\alpha}(\pi+\phi'-\phi)}e^{\frac{\pi}{\alpha}s}}
- \frac{e^{-rr'\cosh(s)/2t}}{1-e^{i\frac{\pi}{\alpha}(-\pi+\phi'-\phi)}e^{\frac{\pi}{\alpha}s}}\right)\ ds.\\
\end{eqnarray*}

Restring to the diagonal, $r=r'$ and $\phi=\phi'$, we re-write 
$$L_1 = i \int_{-\infty} ^\infty e^{-r^2 \cosh(s)/(2t)} \left( \frac{1}{1-e^{\pi s/\alpha} e^{i\pi^2/\alpha}} - \frac{1}{1-e^{\pi s/\alpha} e^{-i\pi^2/\alpha}} \right) ds $$
$$= i \int_{-\infty} ^\infty e^{-r^2 \cosh(s)/(2t)} \left( \frac{e^{\pi s/\alpha} (2i \sin(\pi^2/\alpha))}{1+e^{2\pi s/\alpha} - e^{\pi s/\alpha} (2\cos(\pi^2/\alpha))} \right) ds $$
$$= -2 \sin(\pi^2/\alpha) \int_{-\infty} ^\infty e^{-r^2 \cosh(s)/(2t)} \frac{1}{e^{-\pi s/\alpha} + e^{\pi s/\alpha} - 2 \cos(\pi^2/\alpha)} ds $$
$$= - \sin(\pi^2/\alpha) \int_{-\infty} ^\infty \frac{e^{-r^2 \cosh(s)/(2t)}}{\cosh(\pi s/\alpha) - \cos(\pi^2/\alpha)} ds.$$ 

Including the factor of $\frac{e^{-r^2/2t}}{8\alpha \pi t}$, as well as the $\log(r)$, we compute 
$$\frac{1}{8\alpha \pi t} \int_0 ^R \int_0 ^\alpha e^{-r^2(1+\cosh(s))/2t} \log(r) r dr d\phi = \frac{1}{8\pi t} \int_0 ^R  e^{-r^2(1+\cosh(s))/2t} \log(r) r dr.$$
Next, we do a substitution letting 
$$u = \frac{r^2 (1+\cosh(s))}{2t}, \quad du = \frac{r(1+\cosh(s))}{t} dr,$$
so this becomes 
$$\frac{1}{16 \pi (1+ \cosh(s))} \int_0 ^{R^2 (1+\cosh(s))/2t} e^{-u} \log(2tu/(1+\cosh(s))) du.$$
It follows from our previous estimates that the integral from $R^2(1+\cosh(s))/2t$ to $\infty$ is rapidly vanishing as $t \downarrow 0$.  Hence, we may simply compute 
$$\frac{1}{16 \pi (1+ \cosh(s))} \int_0 ^\infty e^{-u} \log(2tu/(1+\cosh(s))) du $$
$$= \frac{1}{16\pi(1+\cosh(s))} \left( \log\left( \frac{2}{1+\cosh(s)} \right) + \log(t) - \gamma_e \right).$$
Thus, we have for $L_1$ in the case that $\alpha \neq \frac{\pi}{n}$ for any $n \in \N$, a contribution coming from 
$$- \sin(\pi^2/\alpha) \int_{-\infty} ^\infty \frac{\log\left( \frac{2}{1+\cosh(s)}\right)  - \gamma_e}{16\pi(1+\cosh(s))(\cosh(\pi s /\alpha) - \cos(\pi^2/\alpha))} ds $$
$$- \log(t) \sin(\pi^2/\alpha) \int_{-\infty} ^\infty \frac{1}{16\pi(1+\cosh(s))(\cosh(\pi s /\alpha) - \cos(\pi^2/\alpha))} ds.$$
Recalling the factor of $2/\alpha$, this gives a contribution to the variational Polyakov formula
$$- \frac{2}{\alpha} \sin(\pi^2/\alpha) \int_{-\infty} ^\infty \frac{\log\left( \frac{2}{1+\cosh(s)}\right)  - \gamma_e}{16\pi(1+\cosh(s))(\cosh(\pi s /\alpha) - \cos(\pi^2/\alpha))} ds $$
$$+ \frac{2 \gamma_e}{\alpha} \sin(\pi^2/\alpha) \int_{-\infty} ^\infty \frac{1}{16\pi(1+\cosh(s))(\cosh(\pi s /\alpha) - \cos(\pi^2/\alpha))} ds.$$
In forthcoming work, we shall compute these integrals.  

Fortunately, there will be no contribution to our formula coming from $f_2$.  To see this, we compute analogously
\begin{eqnarray*}
L_2 &=& - i
\int_{-\infty}^{\infty} \left(\frac{e^{-rr'\cosh(s)/2t}}{1-e^{i\frac{\pi}{\alpha}(\pi+\phi'+\phi)}e^{\frac{\pi}{\alpha}s}}
- \frac{e^{-rr'\cosh(s)/2t}}{1-e^{i\frac{\pi}{\alpha}(-\pi+\phi'+\phi)}e^{\frac{\pi}{\alpha}s}}\right)\ ds\\
\end{eqnarray*}

Restricting to the diagonal, we obtain 
$$L_2 = \sin(\pi^2/\alpha)  \int_{-\infty} ^\infty \frac{e^{-r^2 \cosh(s)/(2t)} }{\cosh(s\pi/\alpha + 2\pi i \phi/\alpha) - \cos(\pi^2/\alpha)} ds. $$
As observed by Kac, the when one integrates $L_2$ over the domain, that is with respect to $r dr d\phi$, the result vanishes; see p. 22 of \cite{kac}.  It is not immediately clear there \em why \em the integral vanishes, because the computation is omitted.  Moreover, our setting is not identical, because we are integrating with respect to $\log(r) r dr d\phi$ rather than $r dr d\phi$.  However, upon closer inspection, it becomes apparent that the reason the integral of $L_2$ over the domain vanishes is due to integration with respect to the angular variable, $d\phi$.  For the sake of completeness, since this computation is only stated but not demonstrated in \cite{kac}, we compute the integral with respect to the angular variable $\phi$,
$$\int_0 ^\alpha \frac{1}{\cosh(s\pi/\alpha + 2\pi i \phi/\alpha) + C} d\phi, \quad C:= -\cos(\pi^2/\alpha).$$
We do the substitution 
$$\theta = s\pi/\alpha + 2\pi i \phi/\alpha,$$
and this becomes 
$$\frac{\alpha}{2\pi i} \int_{s\pi/\alpha} ^{s\pi/\alpha + 2\pi i} \frac{1}{\cosh(\theta) +  C} d\theta.$$
The integral is 
\begin{equation} \label{arctan} \left . - \frac{2 \arctan \left( \frac{(C-1) \tanh(\theta/2)}{\sqrt{1-C^2}} \right)}{\sqrt{1-C^2}} \right|_{\theta=s\pi/\alpha} ^{s\pi/\alpha + 2\pi i}. \end{equation}  
It suffices to compute that the value of the hyperbolic tangent is the same at both endpoints, 
$$\tanh\left( \frac{s\pi/\alpha + 2\pi i}{2} \right) = \frac{\sinh(i\pi + s\pi/(2\alpha))}{\cosh(i \pi + s\pi/(2\alpha))} = \frac{-\sinh(s\pi/(2\alpha))}{-\cosh(s\pi/(2\alpha))} = \tanh(s\pi/(2\alpha)).$$
This follows from the fact that $e^{\pm i \pi } = -1$, and so 
$$\sinh(i\pi + \theta) = - \sinh(\theta), \quad \cosh(i\pi + \theta) = - \cosh(\theta).$$
Consequently, since the $\tanh$ has the same values at the two endpoints, the whole quantity (\ref{arctan}) vanishes.  It follows that $L_2$ will make no contributions to our formula.



\subsection{The total expressions}
We begin with the total expression for the heat kernel on an infinite sector of opening angle $\alpha \in ]0, \pi[$ with Dirichlet boundary condition:  
\begin{multline*}
H_{\alpha}(r,\phi,r',\phi',t) = \frac {e^{-(r^2 + r'^2)/4t}}{8\pi \alpha t} 
\left( \sum_{k=k_{min}}^{k_{max}} 2\alpha e^{rr'\cos(\phi'-\phi + 2k\alpha)/2t}\right.\\ + 
\sum_{V_{\phi,\phi'}} 2 \alpha e^{rr'\cos(\phi'+\phi + 2j\alpha)/2t}\\
- \sin(\pi^2/\alpha) \int_{-\infty} ^\infty \frac{e^{-r r'\cosh(s)/2t}}{\cosh(\frac{\pi}{\alpha}s + i\frac{\pi}{\alpha}(\phi'-\phi)) - \cos(\pi^2/\alpha)} ds \\
\left. + \sin(\pi^2/\alpha) \int_{-\infty} ^\infty \frac{e^{-r r'\cosh(s)/2t}}{\cosh(\frac{\pi}{\alpha}s + i\frac{\pi}{\alpha}(\phi'+\phi)) - \cos(\pi^2/\alpha)} ds\right)
\end{multline*}
where $k_{min} = \ceil*{ \frac{-\pi}{2\alpha} }$, and $k_{max} = \floor*{\frac{\pi}{2\alpha}}$ if $\frac{\pi}{2\alpha} \not\in \Z$,  otherwise 
$k_{max} = \frac{\pi}{2\alpha} - 1$. For $0 < \phi'-\phi < \min \{ (\frac{\pi}{2\alpha} - \floor*{\frac{\pi}{2\alpha}}) 2\alpha, \alpha/2\}$, if 
$\frac{\pi}{2\alpha} \not\in \Z$, and $0 < \phi'-\phi < \alpha/2$ otherwise. 
The sets $$V_{\phi,\phi'}:=]\frac{-\pi-\phi-\phi'}{2\alpha}, \frac{\pi-\phi-\phi'}{2\alpha}[\cap \Z$$
are the same as the sets $V_{\phi}$ described in the proof of Lemma \ref{l:marvsimpl}.  

The total expression Polyakov's formula is obtained by putting together the previous computations, recalling the factor of $\frac{2}{\alpha}$, and including contribution of the constant coefficient of the heat trace. We combine all these ingredients to determine the coefficients $a_{2,0}$ and $a_{2,1}$ in the expansion (\ref{exp-exists}) and conclude with the variational Polyakov formula for all sectors.

Recall that the constant coefficient of the heat trace, which is $\zeta_{\Delta_{\alpha}}(0)$ in equation (\ref{t0-heattrace}), was computed according to \cite[equation (2.13)]{corners}. Including the factor of $\frac{2}{\alpha}$, the contribution to the Polyakov formula from the heat trace is 
$$\frac{2}{\alpha} \zeta_{\Delta_{\alpha}}(0) = \frac{2}{\alpha} \left( -\frac{1}{12} + \frac{\pi^2 + \alpha^2}{24 \pi \alpha} + 2 \frac{\pi^2 + \pi^2/4}{24 \pi (\pi/2)} \right).$$
This simplifies to 
$$\frac{\pi}{12\alpha^2} + \frac{1}{12\pi} + \frac{1}{4\alpha}.$$ 
Consequently, when we combine with the contribution of the reflected term (\ref{reflected}) the $\frac{1}{4\alpha}$ term vanishes. Adding the contributions of the direct term and of the line $L_1$ we obtain
\begin{multline*}
\left.\frac{\pa}{\pa \gamma} \big(-\log(\det(\Delta_{\gamma}))\big)\right|_{\gamma=\alpha}
= \frac{\pi}{12\alpha^2} + \frac{1}{12\pi} \\
+ \sum_{k\in W_{\alpha}} \Big(\frac{-\gamma_e}{2 \pi (1-\cos(2k\alpha))} + \frac{1}{4\pi(1-\cos(2k\alpha))} \log\left( \frac{2}{1-\cos(2k\alpha)}\right) \Big)+ ,\\
- \frac{2}{\alpha} \sin(\pi^2/\alpha) \int_{-\infty} ^\infty \frac{\log\left( \frac{2}{1+\cosh(s)}\right)  - \gamma_e}{16\pi(1+\cosh(s))(\cosh(\pi s /\alpha) - \cos(\pi^2/\alpha))} ds\\
+ \frac{2 \gamma_e}{\alpha} \sin(\pi^2/\alpha) \int_{-\infty} ^\infty \frac{1}{16\pi(1+\cosh(s))(\cosh(\pi s /\alpha) - \cos(\pi^2/\alpha))} ds,
\end{multline*}
where the set $W_{\alpha}$ is defined in the statement of Theorem \ref{allsectors}. Notice that if the angle $\alpha$ is of the form $\alpha = \frac{\pi}{n}$, for some $n \in \N$, then the terms with the integrals are omitted from the formula. 
\qed


\section{Determinant of the Laplacian on rectangles} \label{s:rect} 
In this section we prove Theorem \ref{thm:mdr}.   
Consider a rectangle of width $1/L$ and length $L$.  The spectrum of the Euclidean Laplacian on this rectangle with Dirichlet boundary condition can easily be computed using separation of variables, and it is 
$$\left\{ \frac{m^2 \pi^2}{L^2} + \frac{n^2 \pi^2}{w^2} \right\}_{m, n \in \N}.$$
Consequently the spectral zeta function has the following expression:
\begin{eqnarray*}\zeta_L(s) &=& \sum_{m,n \in \N}  \left( \frac{1}{\pi^2 m^2 /L^2 + \pi^2 n^2 L^2} \right)^s\\
&=&(\pi )^{-2s} \sum_{m, n \in \N} \frac{1}{|L|^{2s} | mz + n|^{2s}}, \qquad z = \frac{i}{L^2}.\end{eqnarray*}

\begin{proof}[Proof of Theorem \ref{thm:mdr}] 
We would like to use the computations in \cite[p. 204--205]{OPS}, and so we relate the above expression for the zeta function to the corresponding expression in \cite{OPS} for the torus by
$$\zeta_L(s) = \frac{(\pi)^{-2s}}{2} \left( \sum_{(m,n) \in \Z\times \Z \setminus (0,0)} \frac{1}{|L|^{2s} | mz + n|^{2s}} - 2 L^{-2s} \sum_{n \in \N} \frac{1}{n^{2s}} - 2 L^{2s} \sum_{m \in \N} \frac{1}{m^{2s}} \right).$$
By \cite[p. 204--205]{OPS},
$$G(s) := \sum_{(m,n) \in \Z\times \Z \setminus (0,0)} \frac{1}{|L|^{2s} | mz + n|^{2s}}$$
satisfies
$$G(0) = -1, \quad G'(0) = -\frac{1}{12} \log \left( (2 \pi)^{24} \frac{ (\eta(z) \bar{\eta}(z) )^{24}}{(L)^{24}} \right),$$
where $\eta$ is the Dedekind $\eta$ function.  Consequently,
$$\zeta_L(s) = \frac{1}{2 \pi^{2s}} \left( G(s) - 2 L^{-2s} \zeta_R(2s) - 2 L^{2s} \zeta_R(2s) \right),$$
where $\zeta_R (s)$ denotes the Riemann zeta function $\zeta_R (s) = \sum_{n \in \N} n^{-s}.$
Since the Riemann zeta function satisfies
$$\zeta_R (0) = -\frac{1}{2}, \quad \zeta_R '(0) = - \log \sqrt{2\pi},$$
we compute
\begin{eqnarray*}
\zeta_L'(0) &=& \frac{1}{2} G'(0) - \log \pi + 2 \log(2\pi) = - \log\left( \frac{2\pi |\eta(z)|^2}{L} \right) - \log \pi + 2 \log (2\pi)\\
&=&\log(2) - \log(|\eta(z)|^2/L).\end{eqnarray*}
Consequently we obtain the formula for the determinant
$$\det \Delta_L = e^{-\zeta_L'(0)} = \frac{|\eta(z)|^{2}}{2L}=\frac{|\eta(i/L^2)|^{2}}{2L} =:f(L).$$

Since the rectangle is invariant under $L \mapsto L^{-1}$, we also have 
$$f(L) = \frac{1}{2} \eta(iL^2)^2 L.$$
We briefly recall the definition and some classical identities for the Dedekind $\eta$ function.  First, we have
$$\eta(\tau) = q^{1/12} \prod_{n=1} ^\infty (1-q^{2n}), \quad q = e^{\pi i \tau}, \quad \im(\tau) > 0.$$

We use the following identity from \cite[p. 12]{gunther}, 
$$\log \eta(i/y) - \log \eta(iy) = \frac{1}{2} \log(y), \quad y \in \R^+.$$
Then, we compute for 
$$-\log(\det \Delta_L) = \zeta_L '(0) = -2\log(\eta(i/L^2)) + \log(L) + \log(2),$$
$$-i \frac{\eta'(i/y)}{\eta(i/y) y^2} - i \frac{\eta'(iy)}{\eta(iy)} = \frac{1}{2y} \implies 4 \eta'(i) = i \eta(i).$$
This shows that
\begin{equation} \label{square-min} \frac{d}{dL} \zeta_L '(0)  = \frac{4 i \eta'(i/L^2)}{\eta(i/L^2) L^3} + \frac{1}{L} \implies \frac{d}{dL} \zeta_{L=1} '(0)  \frac{4 i \eta'(i) + \eta(i)}{\eta(i)}=0. \end{equation}
Since
$\frac{d}{dL} \det \Delta_L = \left(\frac{d}{dL} \log(\det \Delta_L) \right) \det \Delta_L$,
and $\det \Delta_L >0$, we have that
$$\left.\frac{d}{dL}\det \Delta_L \right|_{L=1}=0.$$

Next, we show that $f(L)$ is monotonically increasing on $(0,1)$.  By symmetry under $L \mapsto L^{-1}$, this will complete the proof that the zeta regularized determinant on a rectangle of dimensions $L \times 1/L$ is uniquely minimized by the square of side length one.  

To prove this, we begin by recalling equation (1.13) from Hardy \& Ramanujan \cite[eqn (1.13)]{harram}, 
$$\eta(\tau) = \frac{q^{1/12}}{1 + \sum_{n=1} ^\infty p(n) q^{2n}}, \quad q = e^{\pi i \tau}, \quad \im(\tau) >0.$$
Above, $p(n)$ is the number theoretic partition function on $n$.  We therefore compute that 
$$2 f(L) = \eta(iL^2)^2 L = \frac{L e^{-\pi L^2/6}}{\left( 1+ \sum_{n=1} ^\infty p(n) e^{-2 \pi L^2 n} \right)^2}.$$
It is clear to see that the denominator is a monotonically decreasing function of $L$.  We compute that the numerator, 
$$L e^{-\pi L^2/6} \textrm{ is monotonically increasing on } L \in \left( 0, \sqrt{ \frac{3}{\pi}} \right).$$
Thus the quotient is monotonically increasing on that interval as well.  

Let us write
$$2 f(L) = F(L) \wt{G}(L), \quad F(L) = L e^{-\pi L^2/6}, \quad \wt{G}(L) = \left( 1+ \sum_{n=1} ^\infty p(n) e^{-2 \pi L^2 n} \right)^{-2}.$$
Then we have that $F, \wt{G}>0$ on $L>0$, and $\wt{G}'(L) > 0$ on $L > 0$.  We also have that $F'(L) > 0 $ 
for $0<L<\sqrt{3/\pi}$, $F'(\sqrt{3/\pi}) = 0$, and $F'(L) < 0$ for $\sqrt{3}{\pi} < L < 1$.  We wish to prove that 
$$(F\wt{G})' >0 \textrm{ on } \left[ \sqrt{ \frac{3}{\pi}}, 1 \right).$$
This is immediately true at the left endpoint by the preceding observations.  Thus, it is enough to show that 
$$\left| \frac{F'}{F} \right| < \left| \frac{\wt{G}'}{\wt{G}} \right| \textrm{ on } \left( \sqrt{ \frac{3}{\pi}}, 1 \right).$$

We already know that the equality $|F'/F| = |\wt{G}'/\wt{G}|$ holds at $L=1$.  Thus, after computing $|F'/F|$, we must show that 
$$\frac{\wt{G}'}{\wt{G}} > \frac{\pi L}{3} - \frac{1}{L}, \quad \sqrt{ \frac{3}{\pi}} < L < 1.$$
We compute 
$$\wt{G}'(L) = -2(1+\sum_{n \geq 1} p(n) e^{-2 \pi L^2 n})^{-3} (-4\pi L \sum_{n \geq 1} n p(n) e^{-2 \pi L^2 n}).$$
Thus 
$$\frac{\wt{G}'}{\wt{G}} = \frac{ 8 \pi L \sum n p(n) e^{-2 \pi L^2 n}}{1+ \sum p(n) e^{-2 \pi L^2 n}},$$
and we are bound to prove that 
$$\frac{\wt{G}'}{\wt{G}}= \frac{ 8 \pi L \sum n p(n) e^{-2 \pi L^2 n}}{1+ \sum p(n) e^{-2 \pi L^2 n}} > \left| \frac{F'}{F} \right| = \frac{\pi L}{3} - \frac{1}{L}, \quad \sqrt{ \frac{3}{\pi}} < L < 1.$$
Consequently, re-arranging the above inequality, we are bound to prove that 
$$A(L) > B(L), \quad \sqrt{ \frac{3}{\pi}} < L < 1,$$
where 
$$A(L) = \sum_{n \geq 1} n p(n) e^{-2 \pi L^2 n}, \quad B(L) = \left( \frac{1}{24} - \frac{1}{8\pi L^2} \right) \left( 1 + \sum_{n \geq 1} p(n) e^{-2\pi L^2 n} \right).$$ 

To prove that $A(L) > B(L)$ for $\sqrt{3}{\pi} < L < 1$, we first observe that $A(L) >0$ for all $L>0$.  Moreover, $A(L)$ is clearly a monotonically decreasing function of $L$.  We have calculated that $f'(1) = 0$, and $2f(L) = F(L) \widetilde G (L)$, which shows that 
$$\frac{\widetilde G' (1)}{\widetilde G(1)} = -\frac{F'(1)}{F(1)} = \left| \frac{F'(1)}{F(1)} \right| = \frac{\pi}{3} - 1.$$
Hence, $A(1) = B(1)$.  It is plain to see that $B(\sqrt{3/\pi}) = 0$.  Thus since $A$ is monotonically decreasing on $(\sqrt{3}/\pi, 1)$, and $A(\sqrt{3/\pi}) > B(\sqrt{3/\pi})$, it suffices to show that $B$ is strictly increasing on $(\sqrt{3/\pi}, 1)$.  If this is the case, then the graphs of $A$ and $B$ can only cross at most once on  $(\sqrt{3/\pi}, 1]$.  Since we know that at the left endpoint of this interval, we have $A>B$, and at the right endpoint, we have $A=B$, this shows that on the open interval $(\sqrt{3/\pi}, 1)$, $A>B$.  

We therefore compute 
$$B'(L) = \frac{2}{8\pi L^3} \left( 1 + \sum p(n) e^{-2 \pi L^2 n} \right) + \left( \frac{1}{24} - \frac{1}{8 \pi L^2} \right) \sum -4 \pi L n p(n) e^{-2 \pi L^2 n}$$
$$= \frac{1}{4\pi L^3}  \left( 1 + \sum p(n) e^{-2 \pi  L^2 n} \right) + \left( \frac{1}{6} - \frac{1}{2\pi L^2} \right) (-\pi L) \sum n p(n) e^{- 2 \pi  L^2 n}.$$
On $(\sqrt{3/\pi}, 1)$, 
$$\frac{1}{4\pi L^3}  \left( 1 + \sum p(n) e^{-2 \pi L^2 n} \right) >0,$$ 
whereas 
$$ \left( \frac{1}{6} - \frac{1}{2\pi L^2} \right) (-\pi L) \sum n p(n) e^{-\pi L^2 n} < 0.$$
Thus, it suffices to prove that 
$$\frac{1}{4 \pi^2 L^4}  \left( 1 + \sum p(n) e^{-2 \pi L^2 n} \right) >  \left( \frac{1}{6} - \frac{1}{2\pi L^2} \right) \sum n p(n) e^{-2 \pi L^2 n}, $$
for $L \in (\sqrt{3/\pi}, 1)$.
We have on this interval
$$\frac{1}{4 \pi^2 L^4}  \left( 1 + \sum p(n) e^{-2 \pi L^2 n} \right) > \frac{1}{4 \pi^2} .$$
So, it will be enough to prove that 
$$\frac{1}{4 \pi^2} >  \left( \frac{1}{6} - \frac{1}{2\pi L^2} \right) \sum n p(n) e^{-2 \pi L^2 n}.$$
On this interval
$$\frac{1}{6} - \frac{1}{2\pi L^2} \leq \frac{1}{6} - \frac{1}{2\pi} = \frac{\pi - 3}{6\pi}.$$
So, it is enough to prove that 
$$\frac{6\pi}{(\pi - 3)} \frac{1}{4 \pi^2} =\frac{3}{(\pi-3) 2\pi} >  \sum n p(n) e^{-2 \pi L^2 n}.$$
For one final simplification, since the sum on the right is a monotonically decreasing function of $L$, it will suffice to prove this inequality holds for the smallest possible $L = \sqrt{3/\pi}$.  Thus, it is enough to prove that 
$$\frac{3}{(\pi-3) 2\pi}  >  \sum n p(n) e^{-6 n}.$$
Now, we recall a recent estimate of the partition function (p. 114 of \cite{wdap}) 
$$p(n) \leq \frac{e^{c\sqrt{n}}}{n^{3/4}}, \quad c= \pi \sqrt{2/3} < 2.6, \quad \forall n \geq 1.$$
It is straightforward to verify that for all $n \geq 2$ we have
$$n^{1/4} e^{2.6 \sqrt{n}} \leq e^{2n}.$$
Thus we estimate 
$$ \sum n p(n) e^{-6 n} = e^{-6} + \sum_{n \geq 2} n p(n) e^{-6n} \leq e^{-6} + \sum_{n \geq 2} e^{-4n} = \frac{1}{e^6} + \frac{1}{e^8 - e^4} < 0.003.$$
On the other hand
$$\frac{3}{(\pi-3) 2\pi}>3.$$
This completes the proof.  
\end{proof}


\noindent {\bf Concluding remarks.}
Isospectral polygonal domains are known to exist \cite{gww2}, and one can construct many examples by folding paper \cite{chap}.  A natural question is:  how many polygonal domains may be isospectral to a fixed polygonal domain?  Osgood, Phillips and Sarnak used the zeta-regularized determinant to prove that the set of isospectral metrics on a given surface of fixed area is compact in the smooth topology \cite{OPS2}.  Can one generalize this result in a suitable way to domains with corners? Is it possible to define a flow, as \cite{OPS} did, which deforms any initial $n$-gon towards the regular one over time and increases the determinant? How large is the set of isospectral metrics on a surface with conical singularities?  These and further related questions will be the subject of future investigation and forthcoming work. 


\appendix
\section{Carslaw's formula for the Dirichlet heat kernel of the quadrant} 
In the case $\alpha=\pi/2$, the Dirichlet heat kernel for the (infinite) quadrant in polar coordinates was given in equation (\ref{eq:hkCuad}) 
\begin{multline*}
p_{C}(r,r', \phi, \phi',t)\\
=\frac{e^{-\frac{r^2 + r'^2}{4t}}}{2\pi t} \left( \cosh\left( \frac{rr'\cos(\phi'-\phi)}{2t} \right) -
\cosh\left( \frac{rr'\cos(\phi'+\phi)}{2t} \right) \right). 
\end{multline*}

We shall verify that this coincides with the formula in (\ref{eq:DFhk}) with $\alpha=\pi/2$.  The Dirichlet heat kernel by the method of Carslaw is 
\begin{multline}
H_{C}(r,\phi,r',\phi',t) = \frac {e^{-(r^2 + r'^2)/4t}}{8\pi \alpha t}
\left(\int_{A_0} e^{rr'\cos(w)/2t} \frac{1}{1-e^{i 2(\phi'-\phi -w)}} dw \right. \\
\left. -\int_{A_0} e^{rr'\cos(w)/2t} \frac{1}{1-e^{i2(\phi'+\phi -w)}} dw \right)
\end{multline}

We determine the poles of 
$$f_1(w) =  \frac{e^{rr'\cos(w)/2t}}{1-e^{i2(\phi'-\phi-w)}}, \text{ and } f_2(w) = \frac{e^{rr'\cos(w)/2t}}{1-e^{i2(\phi'+\phi-w)}}$$
located in $]-\pi, \pi[$.   In general, the poles of $f_1$ are at the points  $\phi'-\phi +\pi j$ for some $j \in \Z$.  
By symmetry, we may assume without loss of generality that $\phi' > \phi$, and that $\phi'-\phi \leq \pi/2$.  Then, the only $j \in \Z$ such that 
$\phi' - \phi + \pi j \in ]-\pi, \pi[$ are $j=0$ and $j=-1$.  We compute the residues at these poles:
\begin{eqnarray*}
{\text{Res}}_{z=\phi'-\phi+\pi j} \frac{e^{rr'\cos(z)/2t}}{1-e^{i2(\phi'-\phi-z)}}
&=& \lim_{z\to \phi'-\phi +\pi j}
\frac{(z-(\phi'-\phi - \pi j))e^{rr'\cos(z)/2t}}{1-e^{i2(\phi'-\phi-z)}}\\
&=& \frac{1}{2i} e^{rr'\cos(\phi'-\phi + \pi j)/2t}
\end{eqnarray*}

For $f_2$, the poles are in general at $w=\phi'+\phi +\pi j$, for $j \in \Z$.  Those poles within the interval $]-\pi , \pi[$, assuming without loss of generality $\phi' \geq \phi$ are again those with $j = -1$, and $j= 0$. 
The residues at these poles are:
\begin{eqnarray*}
{\text{Res}}_{z=\phi'+\phi+\pi j} \frac{e^{rr'\cos(z)/2t}}{1-e^{i2(\phi'+\phi-z)}}
&=& \lim_{z\to \phi'+\phi+\pi j}
\frac{(z-(\phi'+\phi + \pi j))e^{rr'\cos(z)/2t}}{1-e^{i2(\phi'+\phi-z)}}\\
&=& \frac{1}{2i} e^{rr'\cos(\phi'+\phi + \pi j)/2t}
\end{eqnarray*}

Since the angle is $\pi/2$, the integrals over the lines vanish, so putting everything together we obtain:
\begin{eqnarray*}
H_{C}(r,\phi,r',\phi',t) &=& \frac {e^{-(r^2 + r'^2)/4t}}{4 \pi t}
(e^{rr'\cos(\phi'-\phi)/2t} + e^{rr'\cos(\phi'-\phi -\pi)/2t}\\
& & - e^{rr'\cos(\phi'+\phi)/2t} - e^{rr'\cos(\phi'+\phi -\pi)/2t})\\
&=& \frac {e^{-(r^2 + r'^2)/4t}}{4 \pi t}(e^{rr'\cos(\phi'-\phi)/2t} + e^{-rr'\cos(\phi'-\phi)/2t}\\
& & - e^{rr'\cos(\phi'+\phi)/2t} - e^{-rr'\cos(\phi'+\phi)/2t})\\
&=& p_{C}(r,r', \phi, \phi',t).
\end{eqnarray*}

It is also interesting to verify that for the case of the quarter circle, although the Polyakov formula given in Theorem \ref{allsectors} is quite complicated, it is nonetheless consistent with the result of Theorem \ref{ctpio2}.  Especially, this is interesting because the proof of Theorem \ref{ctpio2} is independent of the proof of Theorem \ref{allsectors}. 

For the quarter circle, the only contribution from the poles of $f_1$ corresponds to $k=-1$, and this gives 
$$-\frac{\gamma_e}{4\pi}.$$
The contribution from the poles of $f_2$ is simply 
$$-\frac{1}{2\pi}.$$
The heat trace gives a contribution of 
$$-\frac{1}{3\pi} + \frac{1}{3\pi} + \frac{1}{12\pi} = \frac{11}{12\pi}.$$
Putting all of these together, we have
$$-\frac{\gamma_e}{4\pi} + \frac{5}{12\pi},$$
which indeed coincides with our calculation in (\ref{variation-qc}).


\end{document}